\documentclass[11pt]{amsart}

\usepackage{amsmath,mathtools}
\usepackage{amsmath, amsthm, amssymb, amsfonts, enumerate}
\usepackage{bbm}
\usepackage{graphicx}
\usepackage{float}
\usepackage{natbib}
\usepackage[dvipsnames]{xcolor}
\usepackage{ulem}
\usepackage[colorlinks=true,linkcolor=blue,citecolor=blue,urlcolor=blue]{hyperref}
\usepackage{dsfont}
\usepackage{color}
\usepackage{geometry}
\usepackage{todonotes}
\usepackage{epstopdf}
\usepackage{tikz}
\usetikzlibrary{patterns}
\usetikzlibrary{patterns.meta}
\usetikzlibrary{arrows.meta}
\usepackage{pgfplots}
\usepackage{caption}
\usepgfplotslibrary{fillbetween}
\graphicspath{ {./Figures/} }

\newcommand{\stkout}[1]{\ifmmode\text{\sout{\ensuremath{#1}}}\else\sout{#1}\fi}

\newtheorem{theorem}{Theorem}[section]
\newtheorem{remark}[theorem]{Remark}
\newtheorem{assumption}[theorem]{Assumption}
\newtheorem{lemma}[theorem]{Lemma}
\newtheorem{proposition}[theorem]{Proposition}
\newtheorem{corollary}[theorem]{Corollary}
\newtheorem{definition}[theorem]{Definition}

\newcommand{\Exp}{\operatornamewithlimits{Exp}}

\DeclareMathOperator*{\argmax}{arg\,max}

\definecolor{darkgreen}{rgb}{0.00,0.35,0.00}

\def\E{\mathsf{E}}

\renewcommand{\hat}{\widehat}
\renewcommand{\tilde}{\widetilde}

\title[Stationary Ergodic Constrained Mean-field Games]{Stationary Ergodic Mean-field Games of\\Constrained Singular Controls}

\author[Ferrari]{Giorgio Ferrari}
\author[Lempa]{Jukka Lempa}
\author[Rodosthenous]{Neofytos Rodosthenous}
\author[Saarinen]{Harto Saarinen}

\address{G.~Ferrari: Center for Mathematical Economics (IMW), Bielefeld University, Universit\"atsstrasse 25, 33615, Bielefeld, Germany}
\email{\href{mailto:giorgio.ferrari@uni-bielefeld.de}{giorgio.ferrari@uni-bielefeld.de}}
\address{J.~Lempa: Department of Mathematics and Statistics, University of Turku, FI-20014 Turun yliopisto, Finland}
\email{\href{mailto:jumile@utu.fi}{jumile@utu.fi}}
\address{N.~Rodosthenous: Department of Mathematics, University College London, 25 Gordon Street, London WC1H 0AY, United Kingdom}
\email{\href{mailto:n.rodosthenous@ucl.ac.uk}{n.rodosthenous@ucl.ac.uk}}
\address{H.~Saarinen: Department of Economics, Turku School of Economics, University of Turku, FI-20014 Turun yliopisto, Finland}
\email{\href{mailto:hoasaa@utu.fi}{hoasaa@utu.fi}}

\date{\today}

\numberwithin{equation}{section}

\begin{document}

% \begin{abstract}
% We study a stationary ergodic mean-field game of scalar type in which the representative player intervenes on a one-dimensional diffusion by two-sided singular controls that can be exerted only at the jump times of an independent Poisson process. 
% For a fixed mean-field parameter, we firstsolve the associated novel constrained ergodic singular control problem and show that the optimal policy is of threshold type, prescribing that the diffusion is reset only at Poisson arrival times to the nearest boundary point of a suitable interval.
% A key contribution is the  explicit probabilistic derivation of the stationary distribution induced by Poissonian singular controls of threshold type, which is of independent interest. Such stationary distribution is then used to construct the mean-field fixed-point map. We prove existence and uniqueness of the stationary mean-field equilibrium by combining the monotonicity of the optimal thresholds, their continuity in the mean-field parameter, and the explicit stationary density. 
% The model provides a tractable framework for a case study of  partially reversible investment in which adjustments are singular and subject to exogenous intervention constraints.
% \end{abstract}

\begin{abstract}
We study a stationary ergodic mean-field game of scalar type where the representative player controls a diffusion through two-sided singular controls exerted only at the jump times of an independent Poisson process. For each mean-field parameter, we solve the novel constrained ergodic singular control problem, proving that the optimal policy is of threshold type: the diffusion is reset at Poisson arrival times to the nearest boundary point of an interval. Our explicit probabilistic derivation of the stationary distributions induced by Poissonian threshold singular controls is a key contribution of independent interest. This distribution is used to construct the mean-field fixed-point map. We prove existence and uniqueness of the stationary mean-field equilibrium by combining the stationary density with the monotonicity and continuity of the optimal thresholds in the mean-field parameter. The model provides a tractable framework for a case study of partially reversible investment with singular adjustments under exogenous intervention constraints.
\end{abstract}

\maketitle

\noindent
\textbf{Keywords:} ergodic singular control; Poisson constraint; one-dimensional diffusions; stationary mean-field games.

\noindent
\textbf{MSC 2020}: 93E20, 91A16, 49N80, 60J60. 

\section{Introduction}

Singular stochastic control is a natural mathematical formulation for decision problems in which an agent can adjust the state of a system by cumulative, possibly irreversible or partially reversible, interventions. In operations research and mathematical finance, such controls arise in inventory and capacity management, dividend and reserve problems, investment under costly reversibility, and liquidity management. In many of these applications, however, the controller is unable to intervene continuously. 
Orders may be processed only at inspection times, trades may be possible only when liquidity permits, dividends may be paid only at specific dates, and managerial attention or regulatory permissions may occur at random times. A convenient and analytically tractable way to capture this feature is to restrict interventions to the jump times of an exogenous Poisson process.

This paper studies a stationary mean-field game in which this intervention constraint is combined with two-sided singular control and an ergodic criterion. 
The representative player controls a one-dimensional diffusion by upward and downward bounded-variation interventions, which can be implemented only when an independent Poisson process jumps. For a given mean-field parameter $\theta$, the agent maximises a long-run average payoff consisting of a running reward and proportional intervention costs or profits. 
The mean-field interaction is of scalar type: at equilibrium, $\theta$ must coincide with a prescribed function of a generalised moment of the stationary distribution of the optimally controlled state. 

\subsection{Our results}
Our analysis is constructive. We first fix the mean-field parameter and solve the representative player's constrained ergodic singular control problem. We show that the optimal policy is of threshold type: whenever the uncontrolled diffusion is observed below a lower threshold at a Poisson time, it is pushed up to that threshold, and whenever it is observed above an upper threshold, it is pushed down to that threshold. The two thresholds solve an explicit nonlinear algebraic system involving the speed measure, scale function, and the fundamental solutions $\psi_\lambda $ and $\varphi_\lambda $ of the uncontrolled diffusion.

The first main mathematical contribution is the derivation of the stationary distribution generated by Poissonian singular control processes of threshold type. 
For %one-sided downward or upward controls, 
the two-sided controlled process, we identify the invariant law through a renewal-cycle representation, the derivation of occupation measures for one-sided downward or upward controlled processes, and resolvent identities for the underlying diffusion, which eventually allow us to obtain a closed-form expression for the stationary distribution. %We then combine the one-sided occupation measures to obtain a closed-form expression for the stationary distribution in the two-sided case. 
For a threshold pair $(a,y)$, the stationary density with respect to the speed measure is proportional to
\[
\begin{cases}
\psi_\lambda (x)/\psi_\lambda (a), & x\leq a,\\
1, & a<x<y,\\
\varphi_\lambda (x)/\varphi_\lambda (y), & x\geq y,
\end{cases}
\]
with an explicit normalising constant. 
In addition, even though not needed for solving the MFG studied in this paper, using the same occupation measures we also obtain the stationary distributions of the one-sided downward and upward controlled processes in Section~\ref{sec:density1}.
To the best of our knowledge, these derivations are novel in the literature, which carries its own independent value, as they can be useful in other such constrained ergodic singular control problems and games.
It also provides a transparent comparison with the continuously reflected case: the mass outside the continuation interval $(a,y)$ is weighted by the fundamental solutions $\psi_\lambda $ and $\varphi_\lambda$ because excursions beyond the thresholds are adjusted only at Poisson arrival times.

The second contribution is the solution of the two-sided constrained ergodic singular control problem. Using the explicit stationary distribution, we compute the long-run average payoff under a Poissonian singular control of threshold type, for a general pair of thresholds, and derive the first-order conditions for optimality. 
We then prove existence and uniqueness of the optimal pair of thresholds. The verification over the general admissible class is obtained through an associated free-boundary problem and a variational inequality structure adapted to the Poissonian intervention constraint. In particular, the marginal value is linked to an auxiliary constrained Dynkin game whose stopping times are restricted to Poisson arrival times. This connection is useful both for the verification argument and for the subsequent mean-field equilibrium analysis.

The third contribution is the existence and uniqueness of the stationary mean-field equilibrium. We prove the monotonicity of the optimal thresholds with respect to the mean-field parameter $\theta$ by exploiting the auxiliary Dynkin game and their continuity by using the algebraic characterisation of the first-order conditions. The explicit stationary density then yields continuity of the induced fixed-point operator $\mathcal T$.
A compact invariant set for $\mathcal T$ is obtained from the monotonicity of the thresholds and stochastic comparison. Existence follows from the Schauder--Tychonoff fixed-point theorem, while uniqueness follows from the monotonicity of the fixed-point map induced by the decreasing dependence of the optimal thresholds on $\theta$.
From a methodological point of view, the paper thus contributes to the analysis of ergodic singular control under random intervention opportunities and to the probabilistic construction of stationary mean-field equilibria.

Our framework accommodates a variety of applications, including mean-field game models of partially reversible investment in which agents face long-run objectives and interact through an aggregate stationary quantity. 
In the case study considered here, the mean-field parameter can be interpreted as a stationary market price or demand index determined by aggregate productivity, as in investment models with isoelastic demand (see, e.g.~\citep{AchdouBueraLasryLionsMoll2014,CalviaFedericoFerrariGozzi2024}). 

\vspace{-5mm}
\subsection{Related literature}

The paper is related first to the literature on ergodic singular control. For one-dimensional diffusions with continuous intervention opportunities, two-sided ergodic singular control problems have been studied, among others, by \citet{JackZervos2006}, \citet{Alvarez2018} and \citet{Kunwai2022}; see also \cite{calvia2025optimal} for a solvable class of multi-dimensional ergodic singular stochastic control problems. In those works, the optimal policy is typically characterised by reflection at endogenously determined thresholds. Our setting differs because reflection is not continuously available: the controller may act only at Poisson arrival times, and therefore the optimally controlled process can overshoot the continuation interval between intervention opportunities. This changes both the optimality system and the invariant distribution.

The random-intervention constraint connects our work to the literature on Poissonian control and stopping. \citet{Wang2001} studies control problems with random intervention times, while \citet{Lempa2014} and \citet{Saarinen2023} analyse discounted Poissonian control problems, including two-sided variants. The closest paper in ergodic control is \citet{LempaSaarinen2021}, which solves a one-sided ergodic Poissonian control problem. 
In this paper, we firstly consider a two-sided constrained ergodic control problem (instead of only one-sided) and subsequently embed this into a stationary mean-field game. 
Our probabilistic arguments also rely on ideas from Poissonian stopping problems and games, including \citet{DupuisWang2002}, \citet{Lempa2012}, \citet{Guo2005}, \citet{Hobson2021}, \citet{LiangSun2019}, \citet{LiangSun2020}, \citet{Lange2020}, \citet{Perez2021}, and \citet{Alvarez2024}. The latter is particularly relevant for first Poisson-sampled hitting times of diffusions.

Poissonian observation and intervention mechanisms also arise in several applied literatures. 
In insurance and risk theory, randomised or periodic observation has been used to model dividend and capital-injection policies; see, for example, \citet{Albrecher2011a}, \citet{Albrecher2011b}, \citet{Albrecher2011c}, \citet{Albrecher2013}, \citet{Avanzi2013}, \citet{Yu2021}, and \citet{Zhao2017}. Exit identities under Poisson observation have been investigated by \citet{Albrecher2016} and \citet{Albrecher2017}. In portfolio choice and liquidity models, random trading opportunities or liquidity shocks have been studied by \citet{RogersZane2002}, \citet{Matsumoto2006}, \citet{Matsumoto2007}, \citet{Pham2008}, \citet{Pham2009}, \citet{Pham2009b}, and \citet{Liang2015b}. 
These contributions provide an applied motivation for the Poissonian constraint; our focus is on its implications for ergodic singular control and mean-field games.

Finally, our work contributes to the literature on stationary mean-field games with singular controls and scalar interaction. Those have been studied in a series of recent contributions, with different specifications of the underlying process, of the equilibrium notion and of the uncertainty structure; see \cite{aid2025stationary,cao2023stationary,cao2022mfgs,cannerozzi2026cooperation,cannerozzi2026stationary,christensen2025two,dianetti2026ergodic, ferrari2025stationary}.
The scalar interaction and the investment interpretation of our paper are in line with \cite{aid2025stationary,cao2023stationary,cao2022mfgs,cannerozzi2026cooperation,dianetti2026ergodic,ferrari2025stationary}, aligning with the investment models considered in \citet{CalviaFedericoFerrariGozzi2024}, for the case of regular controls. The distinctive feature of the present paper is that the singular control is two-sided, the performance criterion is ergodic, and intervention opportunities are exogenously restricted to Poisson arrival times. To the best of our knowledge, stationary mean-field games with Poissonian singular intervention constraints have not been considered before.

\vspace{-3mm}
\subsection{Organisation of the paper}

The rest of the paper is organised as follows. Section \ref{sec:framework} introduces the probabilistic framework, the standing assumptions and constraints, and the considered constrained ergodic mean-field game. 
In Section~\ref{sec:density}, we derive the stationary distributions associated with two-sided Poissonian singular controls of threshold type. 
Then, we solve the representative player's constrained ergodic singular control problem in Section~\ref{sec:OCP}, and verify the optimality of a Poissonian singular control process of threshold type over the general admissible class. 
In Section~\ref{sec:MFG}, we establish the existence and uniqueness of the stationary mean-field equilibrium. 
Finally, Section~\ref{sec:illustration} illustrates the theory with a mean-field game in which agents make partially reversible investments in productivity to maximise profits determined by an equilibrium price arising from a competitive market model with isoelastic demand.
Our theoretical developments conclude in Section~\ref{sec:density1} with the derivation of the stationary distributions associated with one-sided upward and downward Poissonian singular controls of threshold type.

\section{Probabilistic Framework and the Mean-field Game Problem}
\label{sec:framework}

Let $(\Omega,\mathcal{F},\mathbb{F},\mathbb{P})$ be a filtered probability space,
where the filtration $\mathbb{F} := (\mathcal{F}_t)_{t \ge 0}$ satisfies the usual conditions.
We assume that the filtration $\mathbb{F}$ is generated by a one-dimensional
standard Brownian motion $W = (W_t)_{t \ge 0}$ and an independent Poisson process
$N = (N_t)_{t \ge 0}$ with intensity $\lambda > 0$, that is,
\[
\mathcal{F}_t = \sigma\big( W_s, N_s : 0 \le s \le t \big) \vee \mathcal{N},
\qquad t \ge 0,
\]
where $\mathcal{N}$ denotes the $\mathbb{P}$-null sets of $\mathcal{F}$.

We denote by $(T^{(i)})_{i \in \mathbb{N}}$ the jump times of $N$, with the convention
$T^{(0)} = 0$, and write
\[
0 = T^{(0)} < T^{(1)} < \cdots < T^{(n)} < \cdots,
\qquad T^{(\infty)} := \lim_{n \to \infty} T^{(n)} = \infty \quad \mathbb{P}\text{-a.s.}
\]
The inter-arrival times
\[
\tau_i := T^{(i)} - T^{(i-1)}, \qquad i \in \mathbb{N},
\]
are independent and identically distributed exponential random variables with
mean $\frac{1}{\lambda}$. 

Consider a one-dimensional diffusion $X$ taking values in $\mathcal{I} := (\mathfrak{l},\mathfrak{r}) \subseteq \mathbb{R}_+$, 
satisfying
\begin{equation} \label{Eq:UncontrolledDiffusion}
dX_t = b(X_t) \, dt + \sigma(X_t) \, dW_t, \quad X_0 = x \in \mathcal{I},
\end{equation}
where $b: \mathcal{I} \to \mathbb{R}$ and $\sigma: \mathcal{I} \to \mathbb{R}_+$ satisfy the following assumptions.

\begin{assumption}
\label{Ass:X}
The following hold:
\begin{enumerate}[\rm (i)]
\item The functions $b(\cdot) \in C^2(\mathcal{I})$ and $\sigma(\cdot) \in C^2(\mathcal{I})$; 
\item There exists $C>0$, such that
$|b(x)|+|\sigma(x)| \leq C(1+|x|)$, for any $x \in \mathcal{I}$; 
\item There exists $c>0$, such that 
$b'(x) \leq -c$, for any $x \in \mathcal{I}$. Furthermore, for any $T>0$, we have $\E_x[\exp\{\frac{1}{2}\int_{0}^T (\sigma\sigma')^2(X_t) dt\}] < \infty$, for any $x \in \mathcal{I}$; 
\item For any $x \in \mathcal{I}$, $\sigma(x)>0$; 
\item The boundaries $\mathfrak{l}$ and $\mathfrak{r}$ are assumed to be natural for the diffusion $X$.
\end{enumerate}
\end{assumption}

Assumptions \ref{Ass:X} (i), (ii), and (iv) imply that $X$ is the unique strong solution to the SDE \eqref{Eq:UncontrolledDiffusion} and is a regular diffusion. 
Assumption~\ref{Ass:X}.(iii) additionally imposes a uniformly mean-reverting drift, with the strength of mean reversion bounded away from zero, and guarantees the validity of a Novikov condition, that will play a role in the analysis of Section \ref{sec:MFG}.
Finally, Assumption (v) implies that the process cannot reach either boundary in finite time when started from the interior $(\mathfrak{l},\mathfrak{r})$, nor can it be started from either boundary.

\subsection{Probabilistic background on one-dimensional diffusions}
\label{sec:Diffusions}

Denote by $m$ and $S$ the speed measure and scale function of $X$, with
\begin{align} \label{Eq:Sm}
S'(x) := e^{-\mathcal{B}(x)}, \quad m(dx) := m'(x)dx, \quad m'(x) := \frac{2 e^{\mathcal{B}(x)}}{\sigma^2(x)}, 
\quad \mathcal{B}(x) := \int_{c_0}^x \frac{2 b(z)}{\sigma^2(z)} dz, \; c_0 \in \mathcal{I}.
\end{align}
The infinitesimal generator can then be written as
\begin{align} \label{Eq:GeneratorCanonical}
\mathcal{G} := \frac{\sigma^2(x)}{2} \frac{d^2}{dx^2} + b(x)\frac{d}{dx} = \frac{d}{dm} \frac{d}{dS}.
\end{align}
The Green kernel $G_\lambda : \mathcal{I}^2 \to \mathbb{R}_{+}$ is defined by
\begin{equation} \label{Eq:GreenKernel0}
G_\lambda(x,y) := \int_0^\infty e^{-\lambda t} p(t;x,y) \, dt,
\end{equation}
where $p(t;x,y)$ is the transition density of $X$ with respect to the speed measure. 
Let $\psi_\lambda$ (resp., $\varphi_\lambda$) be the unique, up to a multiplicative constant, increasing (resp., decreasing) solution of $\mathcal{G} u = \lambda u$. 
Then since $\mathfrak{l}$ and $\mathfrak{r}$ are natural (cf.~\citep[II.1.10--11]{BorodinSalminen}) 
\begin{equation*}
\begin{aligned}
&\psi_\lambda(\mathfrak{l}+) = 0, \quad && \frac{\psi_\lambda'(\mathfrak{l}+)}{S'(\mathfrak{l+})} = 0, \quad &&\varphi_\lambda(\mathfrak{l}+) = +\infty, \quad && \frac{\varphi_\lambda'(\mathfrak{l}+)}{S'(\mathfrak{l+})} = -\infty, \\
&\psi_\lambda(\mathfrak{r}-) = +\infty, \quad &&\frac{\psi_\lambda'(\mathfrak{r}-)}{S'(\mathfrak{r}-)}  = +\infty, \quad &&\varphi_\lambda(\mathfrak{r}-) = 0, \quad && \frac{\varphi_\lambda'(\mathfrak{r}-)}{S'(\mathfrak{r}-)} = 0,
\end{aligned}
\end{equation*}
with Wronskian $B_\lambda$ independent of $x$, such that 
\begin{equation} \label{Eq:GreenKernel1}
G_\lambda(x,y) =
\begin{cases} 
B_\lambda^{-1} \psi_\lambda(x) \varphi_\lambda(y), & x < y, \\
B_\lambda^{-1} \psi_\lambda(y) \varphi_\lambda(x), & x \ge y,
\end{cases}
\quad \text{with} \quad 
B_\lambda 
:= \frac{\psi_\lambda'(x) \varphi_\lambda(x) - \psi_\lambda(x) \varphi_\lambda'(x)}{S'(x)}.
\end{equation}
Moreover, we define by $\tau_y := \inf \{ t \geq 0 \mid X_t = y \}$ the hitting time of $y \in \mathcal{I}$ and note that
\begin{equation} \label{Eq:hitprob}
h(x; y) := \mathbb{P}_{x}(\tau_{y}<\infty)
=\begin{cases}
\frac{S(x)-S(\mathfrak{l}+)}{S(y)-S(\mathfrak{l}+)}, & x\le y,\\
\frac{S(\mathfrak{r}-)-S(x)}{S(\mathfrak{r}-)-S(y)}, & x \ge y.
\end{cases}
\end{equation}
and denote by $G_0^y$ the Green kernel of $X$ killed at $\tau_y$, which is defined by
\begin{equation*}
G_0^y(x,z) := \int_0^\infty p^y(t;x,z) \, dt,
\end{equation*}
where $p^y(t;x,z)$ is the transition density of $X$ killed upon hitting $y$. 
We have 
\begin{equation} \label{Eq:GreenKernelKilled}
G_0^y(x,z) =  
\frac{\mathbb{E}_x[\int_0^{\tau_y} \mathbf 1 (X_t\in dz) \, dt]}{m(dz)}
= \begin{cases}
\frac{(S(x \wedge z)-S(\mathfrak{l}+))(S(y)-S(x \vee z))}{S(y)-S(\mathfrak{l}+)}, & x,z<y, \\
\frac{(S(x \wedge z)-S(y))(S(\mathfrak{r}-)-S(x \vee z))}{S(\mathfrak{r-})-S(y)}, & x,z>y, \\
0, & %\hspace{-1.8cm} 
(x-y)(z-y)<0.
\end{cases}
\end{equation}

For any fixed $\lambda>0$, denote by $\mathcal{L}^1(\mathcal{I})$ the set of measurable functions $f:\: \mathcal{I} \rightarrow \mathbb{R}_+$ such that
\begin{align*}
\mathbb{E}_x\left[ \int_0^{\infty}e^{-\lambda s}f(X_s) ds \right] &< \infty,
\end{align*}
and define, for any $f \in \mathcal{L}^1(\mathcal{I})$, the resolvent $R_\lambda$ by
\begin{align} \label{Eq:Resolvent0}
(R_\lambda f)(x) &:= \mathbb{E}_x\left[ \int_0^{\infty}e^{-\lambda s}f(X_s) ds \right].
\end{align}
By combining the identities from \citep[II.1.4]{BorodinSalminen} with (\ref{Eq:GreenKernel1}), we observe that the resolvent $R_\lambda$ can be equivalently expressed in terms of $\psi_\lambda$ and $\varphi_\lambda$ in the form
\begin{align} \label{Eq:Resolvent1}
(R_\lambda f)(x) &= B_\lambda^{-1} \left( \varphi_\lambda(x) (\Psi_\lambda f)(x) + \psi_\lambda(x) (\Phi_\lambda f)(x) \right),
\end{align}
where the functionals $\Psi_\lambda$ and $\Phi_\lambda$ are defined by
\begin{align} \label{PsiPhi}
(\Psi_\lambda f)(x) &:= \int_{\mathfrak{l}}^x \psi_\lambda(z) f(z) \, m(dz) 
\quad \text{and} \quad 
(\Phi_\lambda f)(x) := \int_x^{\mathfrak{r}} \varphi_\lambda(z) f(z) \, m(dz) .
\end{align}

% Recall from \citep[Cor. 3.2]{Alvarez2004} that when the boundaries are natural for $X$, we have for any $f,h \in C^2(\mathcal{I})$ that
% \begin{align} 
% \label{Eq:GeneratorFormula1}
% \frac{f'(x)\psi_\lambda(x) - \psi_\lambda '(x)f(x)}{S'(x)} &= \int_{\mathfrak{l}}^x \psi_\lambda(z)((\mathcal{G} - \lambda)f)(z) \, m(dz) ,\\ 
% \label{Eq:GeneratorFormula2}
% \frac{\varphi_\lambda '(x)h(x) - h'(x)\varphi_\lambda(x)}{S'(x)} &= \int_x^{\mathfrak{r}}\varphi_\lambda (z)((\mathcal{G} - \lambda)h)(z) \, m(dz).
% \end{align}
In the subsequent analysis we will make use of the identity 
\begin{equation} \label{Eq:DriftToScale}
\int_x^y b(z) \, m(dz) = \frac{1}{S'(y)}-\frac{1}{S'(x)},
\end{equation}
which follows from differentiating $1/S'(x)$ and using $\mathcal{G}S(x) = 0$, where $\mathcal{G}$ is defined by \eqref{Eq:GeneratorCanonical}. 
Notice also that \eqref{Eq:DriftToScale}, $\mathcal{G}\varphi_\lambda=\lambda\varphi_\lambda$, the definitions of $S'$ and $m'$ from \eqref{Eq:Sm}, imply   
\begin{align*}
\frac{d}{dx} \bigg( x \, \frac{\varphi_{\lambda}'(x)}{S'(x)} - \frac{\varphi_{\lambda}(x)}{S'(x)} \bigg) 
= -m'(x) \, \varphi_{\lambda}(x) \, \big( b(x)- \lambda x \big).
\end{align*}
Then, using the fundamental theorem of calculus, the natural boundary $\mathfrak{r}$, %yields \begin{equation*} y \, \frac{\varphi_{\lambda}'(y)}{S'(y)} - \frac{\varphi_{\lambda}(y)}{S'(y)} = \int_{y}^{\mathfrak{r}} m'(z) \, \varphi_{\lambda}(z) \, \big( b(z)- \lambda z \big) \, dz. \end{equation*} Then using 
the identity $\varphi_\lambda'(x) = - \lambda \, S'(x) \, (\Phi_{\lambda}1)(x)$,
and the fact that $\text{id} \in \mathcal{L}^1(\mathcal{I})$ thanks to Assumption~\ref{Ass:X} (where $\text{id}(x) := x$), we finally conclude that 
\begin{equation} \label{Eq:zyInt}
\lambda \int_y^{\mathfrak{r}} (z-y) \, \varphi_\lambda(z) \, m(dz) 
= \int_y^{\mathfrak{r}} b(z) \, \varphi_\lambda(z) \, m(dz) + \frac{\varphi_\lambda(y)}{S'(y)}. 
\end{equation}
Similarly we have
\begin{equation} \label{Eq:zaInt}
\lambda \int_{\mathfrak{l}}^a (z-a) \, \psi_\lambda(z) \, m(dz) 
= \int_{\mathfrak{l}}^a b(z) \, \psi_\lambda(z) \, m(dz) - \frac{\psi_\lambda(a)}{S'(a)}.
\end{equation}

\subsection{The constrained controlled diffusion dynamics}
\label{sec:controlledmodel}

% The controlled state process $X^\nu$ satisfies the semimartingale dynamics
% \begin{equation*}
% dX_t^\nu = b(X_{t-}^\nu) \, dt + \sigma(X_{t-}^\nu) \, dW_t + d\nu_t
% = b(X_{t-}^\nu) \, dt + \sigma(X_{t-}^\nu) \, dW_t + d\nu_t^+ - d\nu_t^- , \quad t \ge 0.
% \end{equation*}

We now introduce a controlled process $X^\nu$, where a controller (decision maker) can act on the original diffusion $X$ defined by \eqref{Eq:UncontrolledDiffusion} only at the jump times $(T^{(i)})_{i \in \mathbb{N}}$ of the independent Poisson process $N$. Intervention opportunities therefore arrive with rate $\lambda > 0$.

Intuitively, between intervention times, the controlled process follows the original diffusion dynamics:
\begin{equation*}
dX_t^\nu = b(X_t^\nu) \, dt + \sigma(X_t^\nu) \, dW_t, \quad t \in (T^{(i)}, T^{(i+1)}), \quad i \in \{0\} \cup \mathbb{N}.
\end{equation*}
At each $T^{(i)}$, the controller may exert a bounded-variation intervention, resulting in a jump of $X^\nu$ at $T^{(i)}$. The post-intervention value $X^\nu_{T^{(i)}}$ then serves as the initial condition for the diffusion on $(T^{(i)}, T^{(i+1)})$.
To formalise this, we consider the following set of admissible controls. 

\begin{definition}[Poissonian singular control processes]
\label{def:adm}
A control process $\nu : \Omega \times \mathbb{R}_{+} \to \mathbb{R}$ is called a Poissonian singular control process if $t \mapsto \nu_t$ is right-continuous, (locally) of bounded variation, $\nu$ is $\mathbb{F}$-adapted, and admits the representation 
\begin{equation*}
\nu_t = \int_0^t \eta_s dN_s, \quad t \geq 0, 
\quad \text{with} \quad 
\nu_0 = 0,
\end{equation*}
where $\eta$ is a predictable %($\mathbb{F}$-adapted, c\'agl\'ad) 
process. 
The set of Poissonian singular controls is denoted by $\mathcal{V}$.
\end{definition}

Note that any $\nu \in \mathcal{V}$ admits the minimal decomposition $\nu = \nu^+ - \nu^-$, where $\nu^+$ and $\nu^-$ are non-decreasing, right-continuous, $\mathbb{F}$-adapted processes with 
$\nu_0^+ = \nu_0^- = 0$. Moreover, we may write   
\[
\nu_t^+ = \int_0^t \eta_s^+ \, dN_s
\quad \text{and} \quad 
\nu_t^- = \int_0^t \eta_s^- \, dN_s,
\quad t \geq 0,
\]
such that $\eta^+, \eta^-$ are predictable processes satisfying $\eta = \eta^+ - \eta^-$.
The controlled state process $X^\nu$ then satisfies the semimartingale dynamics
\begin{equation}
\begin{split}
\label{Xnu}
dX_t^\nu 
&= b(X_{t-}^\nu)\,dt + \sigma(X_{t-}^\nu)\,dW_t + d\nu_t
= b(X_{t-}^\nu)\,dt + \sigma(X_{t-}^\nu)\,dW_t + d\nu_t^+ - d\nu_t^- \\
&= b(X_{t-}^\nu)\,dt + \sigma(X_{t-}^\nu)\,dW_t + \eta_t^+ \, dN_t - \eta_t^- \, dN_t
, \quad t \ge 0, \quad X^\nu_0 = x \in \mathcal{I}.
\end{split}
\end{equation}

In the rest of the paper, we adopt the following notation: 
$\mathbb{P}_{x}(\,\cdot\,) := \mathbb{P}(\,\cdot\,|X_{0}^{\nu}=x)$ 
and 
$\mathbb{E}_{x}[\,\cdot\,] := \mathbb{E}^{\mathbb{P}}[\,\cdot\, |X^{\nu}_{0}=x]$ for the controlled diffusion process $X^{\nu}$.

%%%%%%% REINTRODUCE IF NEEDED
% \begin{remark}
% Given that inter-arrival times $(\tau_i)_{i \in \mathbb{N}}$ of the Poisson process $N$ are independent $\Exp(\lambda)$ random variables, which are independent of $X$,
% we can compute the expectation of the post-jump state as follows. 
% For any $g \in \mathcal{L}^1(\mathcal{I})$ and inter-arrival time $\tau_i \sim \Exp(\lambda)$ independent of $X$, we have
% \[
% \mathbb{E}_x[ g(X^\nu_{\tau_i}) ] = \lambda (R_{\lambda} g)(x), \quad i \in \mathbb{N}.
% \]
% This relation will be instrumental in analysing the controlled process, as it links the Poisson jump structure directly to the resolvent operator \eqref{Eq:Resolvent0} of the underlying diffusion (see Section \ref{sec:Diffusions} for more details).
% \end{remark}

% We call a control policy  $(\nu_t^-, \nu_t^+)$ admissible if both processes are non-negative, non-decreasing, right-continuous and can be represented as
% \begin{equation*}
%     \nu_t^- = \int_0^{t-} \eta_s^- dN_s, \, \, \quad \nu_t^+= \int_0^{t-} \eta_s^+ dN_s,
% \end{equation*}
% where $\eta^-$ and $\eta^+$ are predictable. The set of admissible controls is denoted by $\mathcal{V}$.

\subsection{The constrained ergodic mean-field game}
\label{sec:ErgodicMFG}

In this section, we introduce the constrained ergodic mean-field game (MFG) for the controlled diffusion $X^\nu$ defined by \eqref{Xnu}, %in Section~\ref{sec:controlledmodel}, 
where interventions are allowed only at the jump times $(T^{(i)})_{i\in\mathbb{N}}$ of the independent Poisson process~$N$. %(see also Definition~\ref{def:adm}).

To begin with, we focus our attention to a subclass of the Poissonian singular control processes $\nu \in \mathcal{V}$ from Definition \ref{def:adm}. 
The class of admissible controls is defined by the set
\begin{equation}
\label{admissible controls} 
\begin{split}
\mathcal{A} := \bigg\{\nu \in \mathcal{V} \,:\, &X^{\nu}_{t} \in \mathcal{I}, \;\forall \, t \geq 0, \; \mathbb{P}\text{-a.s.}, \\
& 
\, \mathbb{E}_x \big[|\nu_{T}| \big]<\infty, \; \forall \, T < \infty, \, \text{and }  \limsup_{T\uparrow\infty}\frac{1}{T}\mathbb{E}_x \big[ |X^{\nu}_{T}| \big] = 0 \bigg\}.
\end{split}
\end{equation}
We then denote by $\pi: \mathcal{I} \times \mathbb{R}_{+} \to \mathbb{R}_+$ the running reward function of a representative player, which depends on the state $x \in \mathcal{I}$ and a (fixed, for the moment) parameter $\theta \in \mathbb{R}_{+}$ that drives the mean-field interaction of the continuum of players (see Definition \ref{def:Equilibrium} later on). 

For any initial state $x \in \mathcal{I}$, admissible control $\nu \in \mathcal{A}$, mean-field parameter $\theta \in \mathbb{R}_{+}$, and constants $q_u > q_d > 0$, we define the {\it long-run average payoff per unit time} (i.e.~the ergodic profit functional) of each representative player by 
\begin{equation} \label{Eq:ObjectiveImproved}
J(x; \nu, \theta) := \limsup_{T \to \infty} \, \frac{1}{T} \, \mathbb{E}_x \bigg[ \int_0^T \pi(X_s^\nu, \theta) \, ds - \int_0^T  q_u \, d\nu_s^+ + \int_0^T  q_d \, d\nu_s^- \bigg].
\end{equation} 
That is, $J(x; \nu, \theta)$ represents the asymptotic average rate at which rewards are earned over an infinite time horizon. 
The first integral term captures cumulative running rewards, while the second and third integrals are the control terms that reflect the impact of interventions: 
increasing the state incurs a proportional cost at rate $q_u$, whereas decreasing it yields a gain at rate $q_d$, with $q_u > q_d > 0$ encoding transaction frictions. 
Typical interpretations include the average profit of an inventory system (buying is costly, selling generates revenue), a cash/reserve model (capital injections versus dividend payouts), or a resource extraction problem (investment costs versus harvesting returns).
Note that $J$ is well defined for $\nu \in \mathcal{A}$, although potentially infinite.

We are now ready to provide the notion of stationary mean-field equilibrium in our setting.  
Consider functions $F:\mathbb{R}_{+} \to \mathbb{R}_{+}$ and $f:\mathcal{I}\to\mathbb{R}_{+}$ and fix a given $\theta \in \mathbb{R}_{+}$. 
Each representative player aims at optimising their individual long-run average payoff functional \eqref{Eq:ObjectiveImproved} over the constrained class $\mathcal{A}$ of admissible controls defined by \eqref{admissible controls}. 
Then, a generalised moment of the stationary distribution (if it exists) of the optimally controlled state process, representing the mean-field interaction of the continuum of players, is identified at equilibrium with $\theta$, which closes the equilibrium feedback loop. 
Hence, the considered stationary mean-field game belongs to the class of mean-field games with scalar interaction, but with the significant novelty that players are allowed to intervene only at the jump times of the independent Poisson process~$N$.

\begin{definition}[Constrained Ergodic MFG Equilibrium] \label{def:Equilibrium}
For an initial state $x \in \mathcal{I}$, a pair $(\nu^{*}(\theta^{*}),\theta^{*}) \in \mathcal{A} \times \mathbb{R}_{+}$ is an \textbf{equilibrium of the constrained ergodic MFG} if
\begin{enumerate}[\rm (1)]
\item \label{def:Equilibrium1} 
$J(x;\nu^{*}(\theta^{*}),\theta^{*}) \geq J(x;\nu,\theta^{*}), \text{ for any } \nu \in \mathcal{A}$;
\item \label{def:Equilibrium2} 
The optimally controlled state process $X^{\nu^{*}(\theta^{*})}$ admits a limiting stationary distribution $\mu^{\theta^{*}}$ and $\theta^{*}$ is such that 
$$
\theta^{*} = F\Big( \int_{\mathcal{I}} f(x) \mu^{\theta^{*}} (dx) \Big).
$$
\end{enumerate}
\end{definition}

The standing assumptions on the structure of the running reward function $\pi$ in our framework are as follows. 

\begin{assumption} \label{Ass:Payoff}
The running reward function $\pi: \mathcal{I} \times \mathbb{R}_{+} \to \mathbb{R}_+$ satisfies the following:
\begin{enumerate}[\rm (i)]
\item $\pi(\cdot,\theta) \in C^{2}(\mathcal{I})$, for any $\theta \in \mathbb{R}_{+}$; 
\item $\pi(\cdot,\theta)$ is non-decreasing and concave on $\mathcal{I}$, for any $\theta \in \mathbb{R}_{+}$; 
\item For any $\theta \in \mathbb{R}_{+}$, the functions
\[
\pi_u(\cdot,\theta) := \pi(\cdot, \theta) + q_u \, b(\cdot) 
\quad \text{and} \quad 
\pi_d(\cdot,\theta) := \pi(\cdot, \theta) + q_d \, b(\cdot), %\quad x \in \mathcal{I}, 
\]
admit unique maximisers $\hat{x}_u = \hat{x}_u(\theta) \in \mathcal{I}$ and $\hat{x}_d = \hat{x}_d(\theta) \in \mathcal{I}$, respectively, such that each $\pi_i(\cdot,\theta)$ is strictly increasing on $(\mathfrak{l}, \hat{x}_i)$ and strictly decreasing on $(\hat{x}_i, \mathfrak{r})$, for $i=u,d$, with 
\[
\lim_{x \to \mathfrak{l}} \pi_d(x,\theta) \ge \lim_{x \to \mathfrak{l}} \pi_u(x,\theta) \geq 0 
\quad \text{and} \quad 
\lim_{x \to \mathfrak{r}} \pi_d(x,\theta) =  \lim_{x \to \mathfrak{r}} \pi_u(x,\theta) =  -\infty,
\]
and additionally 
$\liminf_{x\downarrow\mathfrak l} \frac{\partial}{\partial x} \pi_u(x,\theta) > 0;$
\item For any $\theta \in \mathbb{R}_+$ there exists $\alpha_0(\theta), \alpha_1(\theta) \in \mathbb{R}_+$ such that $$\pi(x,\theta) \leq \alpha_0(\theta) |x|+\alpha_1(\theta), \quad \text{for all } x \in \mathcal{I};$$
\item $\frac{\partial}{\partial x} \frac{\partial}{\partial \theta}\pi(\cdot,\cdot)$ exists and is continuous and $\frac{\partial}{\partial x} \frac{\partial}{\partial \theta}\pi(x,\theta)<0$, for any $(x,\theta)\in\mathcal{I}\times \mathbb{R}_{+}$;
\item There exists $k:\mathcal{I} \to\mathbb{R}_{+}$ such that
\[
\lim_{\theta \downarrow 0} \frac{\partial}{\partial x}\pi(x,\theta) = \infty 
\quad \text{and} \quad 
\lim_{\theta \uparrow \infty} \frac{\partial}{\partial x}\pi(x,\theta) = k(x), 
\;\text{ for any } x \in \mathcal{I}, 
\]
and the functions
$
k_u(\cdot) := k(\cdot) + q_u \, b'(\cdot)$ and 
$k_d(\cdot) := k(\cdot) + q_d \, b'(\cdot)$, 
admit unique zeros $\tilde{x}_u, \tilde{x}_d \in \mathcal{I}$, respectively, such that each $\kappa_i(\cdot)$ is non-negative on $(\mathfrak{l}, \tilde{x}_i)$ and non-positive on $(\tilde{x}_i, \mathfrak{r})$, for $i=u,d$.
\end{enumerate}
\end{assumption}

% Assumption~\ref{Ass:Payoff}.(i)--(iii) imply that, for every fixed $\theta\in\mathbb{R}_{+}$, the intervention-adjusted rewards $\pi_u(\cdot,\theta)$ and $\pi_d(\cdot,\theta)$ are continuous and single-peaked, positive near the left boundary and eventually negative near the right boundary. In particular, each of them has a unique zero on its decreasing branch, which is the qualitative structure needed below to obtain an optimal policy for the constrained control problem in finite threshold-type strategies. The growth condition in Assumption~\ref{Ass:Payoff}.(iv) ensures that the long-run average payoff (\ref{Eq:ObjectiveImproved}) is finite for admissible controls. \bl{Assumption~\ref{Ass:Payoff}.(v) controls the limiting marginal reward at extreme values of the mean-field parameter and (vi) is used to show monotonicity of proposed strategies with respect to $\theta$.}
Assumptions~\ref{Ass:Payoff}.(i)--(iii) impose a natural shape on the reward structure: the running reward is smooth and non-decreasing in the state, while the intervention-adjusted rewards are single-peaked, reflecting that interventions are attractive at relatively low and intermediate states but become unattractive as the state becomes sufficiently large. Assumption~\ref{Ass:Payoff}.(iv) imposes a linear growth bound on the running reward in the state, ensuring that the long-run average payoff is well defined. Assumption~\ref{Ass:Payoff}.(v) specifies that stronger mean-field effects reduce the marginal benefit associated with an increase in the state. Finally, Assumption~\ref{Ass:Payoff}.(vi) characterises the behaviour of marginal rewards in the limiting regimes of the mean-field parameter, ensuring that the interaction between the mean-field environment and individual incentives remains well defined in these regimes.

We next present the standing assumptions on the functions $f,F$ defining the mean-field consistency condition \eqref{def:Equilibrium2} in Definition \ref{def:Equilibrium}.
\begin{assumption}
\label{Ass:Ff}
The functions $f:\mathcal{I}\to\mathbb{R}_{+}$ and $F:\mathbb{R}_{+} \to \mathbb{R}_{+}$ satisfy the following:
\begin{enumerate}[\rm (i)]
    \item $f(\cdot)$ and $F(\cdot)$ are both strictly increasing and continuously differentiable functions;
    \item There exists $\gamma \in(0,1)$ and $C=C(\gamma) > 0$ such that:
    \begin{enumerate}[\rm (a)]
    \item $|f(x)| \leq C (1+|x|^{\gamma})$ for all $x \in \mathcal{I}$ and $|F(m)| \leq C (1+|m|^{\frac{1}{\gamma}})$ for all $m \in \mathbb{R}_{+}$; 
    \item $\big| F(m_1)-F(m_2) \big| \leq C(1+|m_1|+|m_2|)^{\frac{1}{\gamma}-1} |m_1-m_2|$ for all $m_1, m_2 \in \mathbb{R}_{+}$;         \end{enumerate}
    \item $\lim_{x\uparrow \mathfrak{r}} f(x) = \lim_{m\uparrow\infty} F(m) = \infty$.
\end{enumerate}
\end{assumption}

\begin{remark} \label{Rem:Benchmark} 
As a benchmark example satisfying Assumptions \ref{Ass:Payoff} and \ref{Ass:Ff}, we take 
\begin{equation} \label{case}
\pi(x,\theta)=x^{\delta}(\theta^{-(1+\delta)}+\kappa_*),\quad F(m)=m^{1/\delta},\quad f(x)=x^{\delta}, \quad \kappa_* > 0, \quad \delta\in (0,1),
\end{equation}
and a controlled diffusion $X^\nu$ with dynamics given by $b(x)= - \rho x$ and $\sigma(x) = \sigma x$, for constants $\rho, \sigma>0$, which corresponds to a MFG of partially reversible investment: 
$X^\nu$ represents the (controlled) stochastically evolving productivity of a representative company, which depreciates with rate $\rho$ and can be increased (resp., decreased) via the cumulative investment (resp., disinvestment) process $\nu^+$ (resp., $\nu^-$); 
the running reward function~$\pi$ is accrued from the production of a final good, which depends on both the company's productivity level~$x \in \mathbb{R}_{+}$ and the equilibrium stationary price index~$\theta \in \mathbb{R}_{+}$ (see also \cite{cao2022mfgs}, \cite{CalviaFedericoFerrariGozzi2024} in a setting with regular controls, and \citep[Sec.~3]{AchdouBueraLasryLionsMoll2014} for a related real-option problem). 
\end{remark}

In the forthcoming analysis, our search for an ergodic mean-field game equilibrium as in Definition \ref{def:Equilibrium} is split into the following three steps:
\begin{enumerate}
    \item Assuming that, for any given and fixed mean-field parameter $\theta \in \mathbb{R}_{+}$, an optimal policy is of threshold type, we provide the complete characterisation of the stationary distribution $\mu^\theta$ of the state process when it is controlled via a two-sided Poissonian singular control of threshold type (see Section \ref{sec:density}).
    
    \item For given and fixed mean-field parameter $\theta \in \mathbb{R}_{+}$, we then verify the optimality of the candidate threshold-type policy for the ergodic problem in which \eqref{Eq:ObjectiveImproved} is optimised over $\mathcal{A}$. We thus determine the optimal control $\nu^*$, which will depend on $\theta$, i.e.~$\nu^* = \nu^*(\theta)$ (see Section \ref{sec:OCP});
    
    \item Making use of the stationary distribution derived in Step (1) and determined by the optimal controls in Step (2), we finally impose the consistency condition \eqref{def:Equilibrium2} of Definition~\ref{def:Equilibrium}, which is a fixed-point equation in $\theta$, and prove the existence and uniqueness of the mean-field parameter $\theta^{*}$ (see Section \ref{sec:MFG}).
\end{enumerate}
Consequently, we will know by construction that the pair $(\nu^{*}(\theta^{*}),\theta^{*}) \in \mathcal{A} \times \mathbb{R}_{+}$ is an equilibrium of the constrained ergodic MFG. 

Throughout our analysis of the constrained ergodic MFG in Sections~\ref{sec:OCP}--\ref{sec:MFG}, Assumptions \ref{Ass:X}, \ref{Ass:Payoff}, and \ref{Ass:Ff} will be in force and no further reference to them will be made.
The results in Section~\ref{sec:density}, however, hold for more general diffusion processes, as they do not rely on Assumption~\ref{Ass:X}.(iii). 
To emphasise the generality of these results, we therefore state explicitly the assumptions under which each result holds.

% \begin{remark}
% In addition, even though not needed for solving the MFG studied in this paper, using the same occupation measures derived in Section \ref{sec:density} for obtaining the stationary distribution of two-sided controlled processes, we also obtain the stationary distributions of the one-sided downward and upward controlled processes in Section~\ref{sec:density1}.
% Note that, these derivations of the stationary distributions in Sections \ref{sec:density} and \ref{sec:density1} are novel results in the literature which carry their own independent value, since they can be used in other such constrained ergodic singular control problems and games.
% \end{remark}

\begin{remark}
In addition, even though not needed for solving the MFG studied in this paper, using the same occupation measures derived in Section \ref{sec:density} for obtaining the stationary distribution of two-sided controlled processes, we also obtain the stationary distributions of the one-sided downward and upward controlled processes in Section~\ref{sec:density1}.
Note that, these derivations of the stationary distributions in Sections \ref{sec:density} and \ref{sec:density1} are novel results in the literature which carry their own independent value, since they can be used in other such constrained ergodic singular control problems and games.
\end{remark}

\section{Stationary distributions for Poissonian singularly controlled processes of threshold type} \label{sec:density}

We firstly recall that the hitting time $\tau_y$ of $X$ at some $y \in \mathcal{I}$, satisfies for any $r > 0$ (see, e.g.~\citep[p.18]{BorodinSalminen}) the formula
\begin{equation*}
\mathbb{E}_x [ e^{-r \tau_y}] =\begin{cases}
\frac{\psi_r(x)}{\psi_r(y)}, & x \leq y, \\
\frac{\varphi_r(x)}{\varphi_r(y)}, & x > y,
\end{cases}
\end{equation*}
where $\psi_r$ and $\varphi_r$ are the fundamental solutions to $(\mathcal{G}-r)u=0$ (see Section~\ref{sec:Diffusions}).
Then, we further define the first Poisson-sampled hitting times $T_y$ and $T^y$ by
\begin{equation} \label{Ty}
T_y := \inf\{T^{(n)} \mid n \in \mathbb{Z}_+,  X_{T^{(n)}} \geq y\} 
\quad \text{and} \quad
T^y = \inf\{T^{(n)} \mid n \in \mathbb{Z}_+ , X_{T^{(n)}} \leq y\},
\end{equation}
which are the first Poisson jump times at which the process $X$ is at or above, and at or below, the threshold $y$, respectively. 
Then by \citep{Alvarez2024} for any $r>0$ and $\lambda >0$, we have 
\begin{equation*}
    \mathbb{E}_x[e^{-r T_y}] =
    \begin{cases}
        \psi_r(x) \frac{(\Phi_{r+\lambda} 1)(y)}{(\Phi_{r+\lambda} \psi_r)(y)}, & x \leq y, \\
        \varphi_r(x) \frac{(\Psi_{r+\lambda} 1)(y)}{(\Psi_{r+\lambda} \varphi_r)(y)}, & x > y, 
    \end{cases}
\end{equation*}
where $\Psi_{r+\lambda}$ and $\Phi_{r+\lambda}$ are defined by \eqref{PsiPhi}.

Using the above results, we can in principle determine the expected hitting times
\begin{align} \label{ulambda}
u(x;y)
:=\mathbb{E}_x[\tau_y] 
, \quad 
u_{\lambda}^{\downarrow}(x;y) 
:= \mathbb{E}_x[T_y]
\quad \text{and} \quad
u_{\lambda}^{\uparrow}(x;y) 
:= \mathbb{E}_x[T^y],
\end{align}
by observing that  
\begin{equation*}
u(x;y) 
= - \lim_{r \to 0+} \frac{d}{dr} \mathbb{E}_x[e^{-r \tau_y}] 
, \; 
u_{\lambda}^{\downarrow}(x;y) 
= - \lim_{r \to 0+} \frac{d}{dr} \mathbb{E}_x[e^{-r T_y}]
, \;
 u_{\lambda}^{\uparrow}(x;y) 
= - \lim_{r \to 0+} \frac{d}{dr} \mathbb{E}_x[e^{-r T^y}].
\end{equation*}
Although the first representation is well-suited for deriving the stationary density of reflected processes, the latter two are not suitable for deriving the stationary density of threshold-type Poissonian singularly controlled processes.

We therefore begin this section by establishing occupation measures for a diffusion that is controlled either downwards or upwards via Poissonian singular controls at a threshold $y$, i.e.~we focus on one-sided downward or upward controls.
For instance, the case of a one-sided downward control corresponds to a controlled process $X^\downarrow$ which jumps downwards to the point $y$ at jump times $T^{(i)}$ of the independent Poisson process $N$, only if $X^\downarrow_{T^{(i)}} \in (y, \mathfrak{r})$, otherwise $X^\downarrow$ is not controlled.
Namely, we choose the {\it one-sided downward Poissonian singular control process of threshold type} $\nu^\downarrow \in \mathcal{V}$ from Definition~\ref{def:adm}, which is defined by 
\begin{equation} \label{nudown}
\nu_t^\downarrow 
= \int_0^{t} (X^{\downarrow}_{s-}-y)^+ dN_s 
= \sum_{k=1}^{N(t)} (X^{\downarrow}_{T^{(k)}-}-y)^+, 
\quad t \geq 0, 
\quad \text{with} \quad 
\nu^\downarrow_0 = 0,
\end{equation}
where the corresponding downward Poissonian singularly controlled process $X^\downarrow$ at threshold $y$ from \eqref{Xnu} satisfies  
\begin{equation} \label{Xdown}
X^\downarrow_t = x+ \int_0^t b(X^\downarrow_s) ds+ \int_0^t\sigma(X^\downarrow_s) dW_s - \int_0^{t} (X^\downarrow_{s-}-y)^+ dN_s.
\end{equation}
Equivalently, we can also choose the {\it one-sided upward Poissonian singular control process of threshold type} $\nu^\uparrow \in \mathcal{V}$ from Definition~\ref{def:adm}, which is defined by 
\begin{equation} \label{nuup}
\nu_t^\uparrow 
= \int_0^{t} (y-X^{\uparrow}_{s-})^+ dN_s 
= \sum_{k=1}^{N(t)} (y-X^{\uparrow}_{T^{(k)}-})^+, 
\quad t \geq 0, 
\quad \text{with} \quad 
\nu^\uparrow_0 = 0,
\end{equation}
where the corresponding upward Poissonian singularly controlled process $X^\uparrow$ at threshold $y$ from \eqref{Xnu} satisfies  
\begin{equation} \label{Xup}
X^\uparrow_t = x+ \int_0^t b(X^\uparrow_s) ds+ \int_0^t\sigma(X^\uparrow_s) dW_s + \int_0^{t} (y-X^\uparrow_{s-})^+ dN_s.
\end{equation}

For any Borel set $A \subseteq \mathcal{I}$, we then define the occupation measure
\begin{equation} \label{OTu}
u(x;y,A) := \mathbb{E}_x \Big[ \int_0^{\tau_y} \mathbf{1}(X_t \in A) \, dt \Big] =
\begin{cases}
\mathbb{E}_x \big[ \int_0^{\tau_y} \mathbf{1}(X^\downarrow_t \in A) \, dt \big], \quad x \leq y, \\
\mathbb{E}_x \big[ \int_0^{\tau_y} \mathbf{1}(X^\uparrow_t \in A) \, dt \big], \quad x \geq y, 
\end{cases}
\end{equation}
such that $u(x;y,\mathcal{I}) = u(x;y)$ from \eqref{ulambda}, and the occupation measures
\begin{equation} \label{OT}
\begin{split}
V^{\downarrow}(x;y,A) 
&:= \mathbb{E}_x \Big[ \int_0^{T_y} \mathbf{1}(X^\downarrow_t \in A) \, dt \Big] 
= \mathbb{E}_x \Big[ \int_0^{T_y} \mathbf{1}(X_t \in A) \, dt \Big], \\
%\quad \text{and} \quad
V^{\uparrow}(x;y,A) 
&:= \mathbb{E}_x \Big[ \int_0^{T^y} \mathbf{1}(X^\uparrow_t \in A) \, dt \Big]
= \mathbb{E}_x \Big[ \int_0^{T^y} \mathbf{1}(X_t \in A) \, dt \Big],
\end{split}
\end{equation}
such that $V^{\downarrow}(x;y,\mathcal{I}) = u_{\lambda}^{\downarrow}(x;y)$ and $V^{\uparrow}(x;y,\mathcal{I}) = u_{\lambda}^{\uparrow}(x;y)$ from \eqref{ulambda}.

\begin{theorem} \label{Theorem: PoissonOccupation}
Suppose that Assumptions~\ref{Ass:X}.(i),(ii),(iv),(v) hold true, and consider $\lambda > 0$ and $y \in \mathcal{I}$. 
Recall the one-sided downward and upward Poissonian singularly controlled processes $X^\downarrow$ and $X^\uparrow$ at the threshold $y$ satisfying \eqref{Xdown} and \eqref{Xup}, respectively. 
Then, for $x \leq y$,
\begin{equation*} %\label{Eq:V_down_theorem}
V^{\downarrow}(x; y, dz) 
= \frac{m(dz)}{\lambda(\Phi_\lambda 1)(y) + \frac{\varphi_{\lambda}(y)}{S(y)-S(\mathfrak{l}+)}} \cdot
\begin{cases}
h(x \wedge z;y) \big( \varphi_\lambda(y) + \lambda(\Phi_\lambda 1)(y) \big( S(y) - S(x \vee z) \big) \big),&z < y, \\
h(x;y) \, \varphi_\lambda(z), & z \geq y,
\end{cases}
\end{equation*}
and for $x \geq y$
\begin{equation*} %\label{Eq:V_up_theorem}
V^{\uparrow}(x; y, dz) 
= \frac{m(dz)}{\lambda(\Psi_\lambda 1)(y)+\frac{\psi_\lambda(y)}{S(\mathfrak{r}-)-S(y)}} \cdot
\begin{cases}
h(x;y) \, \psi_\lambda(z), & z \leq y, \\\
h(x \vee z;y) \big( \psi_\lambda(y) + \lambda(\Psi_\lambda 1)(y) \big( S(x \wedge z) - S(y) \big) \big), & z > y,
\end{cases}
\end{equation*}
% \begin{equation} \label{Eq:V_down_theorem} 
% V^{\downarrow}(y; y, dx) = \frac{m(dx)}{\lambda(\Phi_\lambda 1)(y) + \frac{\varphi_{\lambda}(y)}{S(y)-S(\mathfrak{l}+)}} \cdot
% \begin{cases}
% \varphi_\lambda(y) h(x;y), & x < y, \\
% \varphi_\lambda(x), & x \geq y.
% \end{cases}
% \end{equation}
% \begin{equation} \label{Eq:V_up_theorem}
% V^{\uparrow}_{\lambda}(y; y, dx) = \frac{m(dx)}{\lambda(\Psi_\lambda 1)(y)+\frac{\psi_\lambda(y)}{S(\mathfrak{r}-)-S(y)}} \cdot
% \begin{cases}
% \psi_\lambda(x), & x \leq y, \\
% \psi_\lambda(y) h(x;y), & x \geq y,
% \end{cases}
% \end{equation}
where $m, S, h, \varphi_\lambda , \Phi_\lambda , \psi_\lambda , \Psi_\lambda $ are defined in Section~\ref{sec:Diffusions}.
\end{theorem}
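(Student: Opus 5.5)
The occupation measure $V^\downarrow(x;y,dz)$ for $x\le y$ solves a renewal equation, and the plan is to derive that equation from the strong Markov property at $\tau_y$ and at the first Poisson time, then solve it using the resolvent. The case of $V^\uparrow$ is symmetric: swap $\psi_\lambda\leftrightarrow\varphi_\lambda$ and $\Psi_\lambda\leftrightarrow\Phi_\lambda$, and reverse the orientation of the scale function. We therefore treat only $V^\downarrow$.

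\emph{Step 1 (reduction to starting at $y$).} For $x\le y$, the uncontrolled $X$ started at $x$ cannot have $X_{T^{(n)}}\ge y$ before $\tau_y$ except by reaching $y$, by continuity. So on $\{\tau_y<\infty\}$ we have $T_y\ge\tau_y$, and by the strong Markov property together with the memorylessness of $N$,
\[
V^\downarrow(x;y,A)=u(x;y,A)+h(x;y)\,V^\downarrow(y;y,A).
\]
On $\{\tau_y=\infty\}$, which has probability $1-h(x;y)$, the time $T_y$ is infinite, but the occupation is already accounted for in $u(x;y,A)$. The first term is known from \eqref{Eq:GreenKernelKilled}: $u(x;y,dz)=G_0^y(x,z)m(dz)$, which is nonzero only for $z<y$.

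\emph{Step 2 (renewal equation at $y$).} Start at $y$ and let $T^{(1)}\sim\Exp(\lambda)$ be independent of $X$. Up to $T^{(1)}$ the occupation is $\mathbb{E}_y[\int_0^{T^{(1)}}\mathbf 1(X_t\in A)dt]=(R_\lambda\mathbf 1_A)(y)$. If $X_{T^{(1)}}\ge y$, we stop. If $X_{T^{(1)}}=w<y$, we restart from $w$ and apply Step 1. This gives
\[
V^\downarrow(y;y,A)=(R_\lambda \mathbf 1_A)(y)+\lambda\int_{\mathfrak l}^{y}G_\lambda(y,w)\big[u(w;y,A)+h(w;y)V^\downarrow(y;y,A)\big]m(dw).
\]
Solving the linear equation for $V^\downarrow(y;y,A)$ gives a denominator $1-\lambda\int_{\mathfrak l}^y G_\lambda(y,w)h(w;y)m(dw)$. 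With $G_\lambda(y,w)=B_\lambda^{-1}\varphi_\lambda(y)\psi_\lambda(w)$ for $w<y$ and $h(w;y)=(S(w)-S(\mathfrak l+))/(S(y)-S(\mathfrak l+))$, this requires $\int_{\mathfrak l}^y\psi_\lambda(S-S(\mathfrak l+))\,dm$. We obtain it from $\lambda\psi_\lambda=\frac{d}{dm}\frac{d\psi_\lambda}{dS}$ by integrating by parts, using the natural-boundary limits at $\mathfrak l$. The result is $\big[\psi'_\lambda(y)/S'(y)\big](S(y)-S(\mathfrak l+))-\psi_\lambda(y)$. Then the Wronskian $B_\lambda$ together with $\varphi'_\lambda/S'=-\lambda\Phi_\lambda1$ turns the denominator into
\[
\frac{\psi_\lambda(y)}{B_\lambda}\Big[\lambda(\Phi_\lambda1)(y)+\frac{\varphi_\lambda(y)}{S(y)-S(\mathfrak l+)}\Big].
\]
This is the stated normalising constant, up to the factor $\psi_\lambda(y)/B_\lambda$, which should cancel against the numerator.

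\emph{Step 3 (numerator by cases in $z$).} For $z\ge y$, only the resolvent term contributes: $B_\lambda^{-1}\psi_\lambda(y)\varphi_\lambda(z)m(dz)$. After cancelling $\psi_\lambda(y)/B_\lambda$ this leaves $\varphi_\lambda(z)$, and Step 1 contributes the factor $h(x;y)$. For $z<y$, we add $B_\lambda^{-1}\varphi_\lambda(y)\psi_\lambda(z)$ and $\lambda B_\lambda^{-1}\varphi_\lambda(y)\int_{\mathfrak l}^y\psi_\lambda(w)G_0^y(w,z)m(dw)$. The latter integral is split at $w=z$ and evaluated by the same integration-by-parts identities, i.e.\ $\int_{\mathfrak l}^z\psi_\lambda\,dm=\psi'_\lambda(z)/(\lambda S'(z))$ and its $S$-weighted analogue. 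Combining with Step 1's $G_0^y(x,z)$ term and the factor $h(x;y)$, we expect the Wronskian to collapse everything to $h(x\wedge z;y)\big(\varphi_\lambda(y)+\lambda(\Phi_\lambda1)(y)(S(y)-S(x\vee z))\big)$, times the common denominator.

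The main obstacle is this last algebraic collapse for $z<y$. There the $\psi_\lambda$-terms must cancel via $B_\lambda$, and the cases $x<z$ and $x>z$ must reconcile with the $x\wedge z$, $x\vee z$ structure. We would first verify it at $x=y$, where $h(x\wedge z;y)=h(z;y)$, and then obtain general $x$ from Step 1. The justification of the renewal step, namely finiteness and the use of the strong Markov property at Poisson times, is routine since $T_y<\infty$ a.s.\ for natural boundaries.
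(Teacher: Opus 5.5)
Your proposal is correct and follows the paper's proof essentially step for step. The shared steps are: the reduction $V^{\downarrow}(x;y,A)=u(x;y,A)+h(x;y)\,V^{\downarrow}(y;y,A)$; conditioning on the first Poisson arrival to get a linear equation for $V^{\downarrow}(y;y,\cdot)$ with the denominator you computed; evaluating $\int_{\mathfrak l}^{y}G_0^y(w,z)\psi_\lambda(w)\,m(dw)=\lambda^{-1}\big(h(z;y)\psi_\lambda(y)-\psi_\lambda(z)\big)$ by splitting at $w=z$ and integrating by parts, so the $\psi_\lambda(z)$ terms cancel directly; and back-substituting with $h(x;y)h(z;y)=h(x\wedge z;y)\,h(x\vee z;y)$.

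One side remark is wrong: $T_y<\infty$ a.s.\ fails when $S(\mathfrak l+)>-\infty$, since escape to $\mathfrak l$ without returning to $y$ is exactly what produces the term $\varphi_\lambda(y)/(S(y)-S(\mathfrak l+))$ in the normalising constant. Instead, justify solving the renewal equation by summing the geometric series over returns to $y$, whose ratio satisfies $\lambda\int_{\mathfrak l}^{y}G_\lambda(y,w)h(w;y)\,m(dw)\le\mathbb{P}_y(X_{T^{(1)}}<y)<1$.
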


\begin{proof}
We only prove the expression of $V^{\downarrow}$, since the expression of $V^{\uparrow}$ can be obtained using completely analogous arguments.

By the strong Markov property and the memoryless property of the exponential distribution, we obtain 
\begin{equation} \label{Eq:V_up_markov}
V^{\downarrow}(x; y, A) 
= u(x; y, A) + h(x;y) \, V^{\downarrow}(y; y, A), \quad x \leq y,
\end{equation}
where $u(x; y, A)$ is defined in \eqref{OTu} and $h(x,y)$ in \eqref{Eq:hitprob}.

% \smallskip
% {\it Density of occupation measure $V^{\downarrow}(y; y, A)$.}
We can now condition on the first jump time $U \sim \Exp(\lambda)$ of the Poisson process to get 
\begin{equation} \label{Eq:V_up}
\begin{aligned}
V^{\downarrow}(y; y, A) 
& = \mathbb{E}_y \Big[ \int_0^{U} \mathbf{1}(X^\downarrow_t \in A) \, \mathbf{1}(X^{\downarrow}_U  \geq y) \, dt \Big] + \mathbb{E}_y \Big[ \int_0^{T_y} \mathbf{1}(X^\downarrow_t \in A) \, \mathbf{1}(X^\downarrow_U  < y) \, dt \Big] \\
%& = \mathbb{E}_y \Big[ \int_0^{U} \mathbf{1}(X^\downarrow_t \in A) \, dt \Big] + \mathbb{E}_y \Big[ \int_U^{T_y} \mathbf{1}(X^\downarrow_t \in A) \, \mathbf{1}(X^\downarrow_U  < y) \, dt \Big] \\
& = \mathbb{E}_y \Big[ \int_0^{U} \mathbf{1}(X^\downarrow_t \in A) \, dt \Big] + \mathbb{E}_y \Big[ V^{\downarrow}(X^\downarrow_U; y, A) \, \mathbf{1}(X^\downarrow_U  < y) \Big], 
\end{aligned}
\end{equation}
where the first equality follows from the fact that $T_y = U$ and $T_y > U$ in the first and second expectations, respectively, and the second equality from the strong Markov property at $U$ and the memoryless property of the exponential distribution. 
For the first expectation in the latter expression, we find
\begin{equation*}
\begin{aligned}
& \mathbb{E}_y \Big[ \int_0^{U} \mathbf{1}(X^\downarrow_t \in A) \, dt \Big] 
= \mathbb{E}_y \Big[ \int_0^{\infty} \mathbf{1}(X^\downarrow_t \in A)\, \mathbf{1}(U > t) \, dt \Big] \\
&= \int_0^{\infty} e^{-\lambda t} \,  \mathbb{P}_y(X^\downarrow_t \in A) \, dt 
= \int_A G_{\lambda}(y,x) \, m(dx),
\end{aligned}
\end{equation*}
whereas for the second one we find 
\begin{equation*}
\begin{aligned}
\mathbb{E}_y \Big[ V^{\downarrow}(X^\downarrow_U; y, A) \, \mathbf{1}(X^\downarrow_U  < y) \Big] 
= \mathbb{E}_y \Big[ \lambda \int_0^{\infty} e^{-\lambda t} V^{\downarrow}(X^\downarrow_t; y, A) \, \mathbf{1}(X^\downarrow_t < y) \, dt \Big] 
= \lambda (R_{\lambda} f_y)(y),
\end{aligned}
\end{equation*}
where $f_y(z) := V^{\downarrow}(z; y, A) \, \mathbf{1}(z < y)$. 
Then, using the resolvent representation in \eqref{Eq:Resolvent1} with $x=y$ together with \eqref{Eq:V_up_markov}, we further obtain that
\begin{equation*}
\begin{aligned}
\lambda (R_{\lambda} f_y)(y) 
&= \lambda B_{\lambda}^{-1} \varphi_\lambda(y) \int_{\mathfrak{l}}^y u(z; y, A) \, \psi_\lambda(z) \, m(dz) \\
&\quad + \lambda  B_{\lambda}^{-1} \varphi_\lambda(y) \, V^{\downarrow}(y; y, A) \int_{\mathfrak{l}}^y h(z;y) \, \psi_\lambda(z) \, m(dz).
\end{aligned}
\end{equation*}
Therefore, by substituting \eqref{PsiPhi} and all above expressions into \eqref{Eq:V_up}, we %solve for $V^{\downarrow}(y; y, A)$ to 
get 
\begin{equation*}
V^{\downarrow}(y; y, A) 
= \frac{ \int_A G_{\lambda}(y,x) \, m(dx) + \lambda B_{\lambda}^{-1} \varphi_\lambda(y) \int_{\mathfrak{l}}^y u(z; y, A) \, \psi_\lambda(z) \, m(dz)}{1-\lambda B_{\lambda}^{-1} \varphi_\lambda(y) (\Psi_\lambda  h(\cdot;y))(y)}. 
\end{equation*}
Using the Green kernel representation $u(z; y, A) = \int_A G_0^y(z,x) m(dx)$, we eventually conclude that $V^{\downarrow}(y; y, A)$ admits the density
\begin{equation} \label{Eq:V_up-2}
V^{\downarrow}(y; y, dx) 
= \frac{ G_{\lambda}(y,x) + \lambda B_{\lambda}^{-1} \varphi_\lambda(y)\int_{\mathfrak{l}}^y G_0^y(z,x) \, \psi_\lambda(z) \, m(dz)}{1-\lambda B_{\lambda}^{-1} \varphi_\lambda(y) \,(\Psi_\lambda  h(\cdot;y))(y)} \, m(dx) 
\end{equation}
Now, using \eqref{Eq:hitprob} and \eqref{Eq:GreenKernelKilled} we see that for all $z \in (\mathfrak{l},y)$
\begin{equation} \label{Eq:G_0yzx}
G_0^y(z,x) =
    \begin{cases}
        h(z \wedge x;y)(S(y) - S(z\vee x)), & x < y, \\
        0, & x \geq y.
    \end{cases}
\end{equation}
% Now, using \eqref{Eq:hitprob} and \eqref{Eq:GreenKernelKilled} 
% for $x < y$ we see that $G_0^y(z,x) = h(z \wedge x;y)(S(y) - S(z\vee x))$, whereas for $x \geq  y$ we see that $G_0^y(z,x) = 0$, for all $z \in (\mathfrak{l},y)$. 
Hence, for $x<y$, this implies that by splitting the integral at $x$, we can get 
\begin{align} \label{infG0ly}
\int_{\mathfrak{l}}^{y} G_0^y(z,x) \, \psi_\lambda(z) \, m(dz)
&= (S(y)-S(x)) \,(\Psi_\lambda h(\cdot;y))(x) + h(x;y)\int_x^y(S(y)-S(z)) \, d(\Psi_\lambda1)(z) \notag\\
&= (S(y)-S(x)) \Big( h(x;y) \, (\Psi_\lambda 1)(x) -  \frac{\psi_{\lambda}(x)}{\lambda(S(y)-S(\mathfrak{l}+))} \Big) \notag\\ 
& \quad + h(x;y) \Big( \frac{\psi_{\lambda}(y)-\psi_{\lambda}(x)}{\lambda}-(S(y)-S(x)) (\Psi_{\lambda}1)(x) \Big) \notag\\ 
&= \frac{1}{\lambda} \Big( h(x;y)\psi_{\lambda}(y)-\psi_{\lambda}(x) \Big),
\quad x<y,
\end{align}
where we used Stieltjes integration-by-parts formula in the second equality together with the expressions \eqref{Eq:GeneratorCanonical}, \eqref{Eq:hitprob} and the identity $\frac{d \psi_\lambda}{dS}(x) = \lambda (\Psi_{\lambda}1)(x)$, which yields in particular that  
\begin{align} \label{Eq:h_integralx}
\begin{split}
(\Psi_\lambda h(\cdot;y))(x) 
&= \int_{\mathfrak{l}}^x h(z;y) \, \psi_\lambda(z) \, m(dz) 
= \frac1\lambda \int_{\mathfrak{l}}^x h(z;y) \, \mathcal{G} \psi_\lambda(z) \, m(dz) \\
&= %\frac1\lambda \Big( h(x;y) \, \frac{d \psi_\lambda}{dS}(x) - \frac{\psi_\lambda(x)}{S(y)-S(\mathfrak{l+})} \Big) %\notag\\& = 
h(x;y) \, (\Psi_{\lambda}1)(x) - \frac{\psi_\lambda(x)}{\lambda (S(y)-S(\mathfrak{l+}))}
, \quad x \leq y.
\end{split}
\end{align}
%where the second equality follows from \eqref{Eq:h_integral} at $y=x$. 
Combining the expression \eqref{infG0ly} with the fact that $\int_{\mathfrak{l}}^{y}G_0^y(z,x)\psi_\lambda(z)m(dz) = 0$ for $x \geq y$, thus yields together with the Green-kernel $G_\lambda$ from \eqref{Eq:GreenKernel1}, that the numerator of \eqref{Eq:V_up-2} equals
\begin{equation*}
\begin{cases}
B_{\lambda}^{-1} \varphi_{\lambda}(y) \, \psi_{\lambda}(y) \, h(x;y), & x < y, \\
B_{\lambda}^{-1} \psi_{\lambda}(y) \, \varphi_{\lambda}(x), & x \geq y.
\end{cases}    
\end{equation*}
By %\eqref{Eq:h_integral} 
\eqref{Eq:h_integralx} and \eqref{Eq:hitprob} for $x=y$, and \eqref{Eq:GreenKernel1} together with the identity $\frac{d \varphi_\lambda}{dS}(x) = - \lambda (\Phi_{\lambda}1)(x)$, the denominator of \eqref{Eq:V_up-2} equals
\begin{equation*} %\label{Eq:denominator}
% 1-\lambda B_{\lambda}^{-1} \varphi_\lambda(y) \, (\Psi_\lambda  h(\cdot;y))(y) = 
1-\lambda B_{\lambda}^{-1} \varphi_\lambda(y) \, \Big((\Psi_{\lambda}1)(y) - \frac{\psi_\lambda(y)}{\lambda (S(y)-S(\mathfrak{l+}))} \Big) 
= B_{\lambda}^{-1} \psi_\lambda(y) \Big( \lambda (\Phi_{\lambda}1)(y) -  \frac{\varphi_\lambda(y)}{(S(y)-S(\mathfrak{l+}))} \Big) 
\end{equation*}
Hence the fraction \eqref{Eq:V_up-2} takes the form
\begin{equation} \label{Eq:V_down_theorem_xy} 
V^{\downarrow}(y; y, dx) = \frac{m(dx)}{\lambda(\Phi_\lambda 1)(y) + \frac{\varphi_{\lambda}(y)}{S(y)-S(\mathfrak{l}+)}} \cdot
\begin{cases}
\varphi_\lambda(y) h(x;y), & x < y, \\
\varphi_\lambda(x), & x \geq y.
\end{cases}
\end{equation}

Finally, substituting into \eqref{Eq:V_up_markov} the Green kernel representation $u(x;y,dz) = G_0^y(x,z) \, m(dz)$, where $G_0^y$ is derived in \eqref{Eq:G_0yzx}, together with the expression \eqref{Eq:V_down_theorem_xy} of $V^{\downarrow}(y;y,dz)$, gives 
% $$
% \frac{V^{\downarrow}(x;y,dz)}{m(dz)} 
% = \begin{cases}
% \frac{h(x;y) \, \varphi_\lambda(z)}{\lambda(\Phi_\lambda 1)(y)+\frac{\varphi_\lambda(y)}{S(y)-S(\mathfrak{l}+)}}, & z \geq y, \\
% {\scriptstyle h(x\wedge z;y) (S(y)-S(x\vee z))} + \frac{h(x;y) \, h(z;y) \, \varphi_\lambda(y)}{\lambda(\Phi_\lambda 1)(y)+\frac{\varphi_\lambda(y)}{S(y)-S(\mathfrak{l}+)}}, &z<y. 
% \end{cases} 
% $$
\begin{align}
V^{\downarrow}(x;y,dz)
= \bigg( &\mathbf{1}_{\{z\geq y\}} \frac{h(x;y)\,\varphi_\lambda(z)}{\lambda(\Phi_\lambda 1)(y) + \frac{\varphi_\lambda(y)}{S(y)-S(\mathfrak{l}+)}} \notag\\
&+ \mathbf{1}_{\{z<y\}} \bigg( h(x\wedge z;y)\bigl(S(y)-S(x\vee z)\bigr) + \frac{h(x;y)\,h(z;y)\,\varphi_\lambda(y)}{\lambda (\Phi_\lambda 1)(y) + \frac{\varphi_\lambda(y)}{S(y)-S(\mathfrak{l}+)}} \bigg) \bigg) \, m(dz).
\end{align}
Then, using $h(x;y) \, h(z;y) = h(x \wedge z ; y) \, h(x \vee z; y)$, and the expression $h(x \vee z; y) = 1 - \frac{S(y)-S(x \vee z)}{S(y)-S(\mathfrak{l+})}$ from \eqref{Eq:hitprob} for $x \leq y$, the part related to $\mathbf{1}_{\{z<y\}}$ on the right-hand side becomes
\begin{align*}
&h(x\wedge z;y) \bigg( \bigl(S(y)-S(x\vee z)\bigr) + \frac{\varphi_\lambda(y) - \frac{S(y)-S(x \vee z)}{S(y)-S(\mathfrak{l+})} \,\varphi_\lambda(y)}{\lambda (\Phi_\lambda 1)(y) + \frac{\varphi_\lambda(y)}{S(y)-S(\mathfrak{l}+)}} \bigg) \\
&= h(x\wedge z;y) \bigg( \frac{\varphi_\lambda(y)}{\lambda (\Phi_\lambda 1)(y) + \frac{\varphi_\lambda(y)}{S(y)-S(\mathfrak{l}+)}} + \bigl(S(y)-S(x\vee z)\bigr) \bigg( 1 -  \frac{\frac{\varphi_\lambda(y)}{S(y)-S(\mathfrak{l+})}}{\lambda (\Phi_\lambda 1)(y) + \frac{\varphi_\lambda(y)}{S(y)-S(\mathfrak{l}+)}} \bigg)\bigg) \\
&= \frac{h(x\wedge z;y) \,  \big( \varphi_\lambda(y) + \lambda (\Phi_\lambda 1)(y) \bigl(S(y)-S(x\vee z)\bigr) \big)}{\lambda (\Phi_\lambda 1)(y) + \frac{\varphi_\lambda(y)}{S(y)-S(\mathfrak{l}+)}},
\end{align*}
which completes the proof.
%%%%%%%%%%%%%%%%%%%%%%%%
% [This was not used above / I actually derived a more general formula for $x \leq y$, not only the $x=y$ case:]
% \bl{Since $\mathcal{G}h(z;y) = 0$ for $z \in (\mathfrak{l},y)$, $h(y;y)=1$, and the boundary $\mathfrak{l}$ is natural, integration by parts gives
% \begin{equation} \label{Eq:h_integral}
% (\Psi_\lambda  h)(y) = \int_{\mathfrak{l}}^y h(z;y)\psi_\lambda(z) m(dz) =  (\Phi_{\lambda}1)(y)-\frac{\psi_{\lambda}(y)}{\lambda(S(y)-S(\mathfrak{l+}))}.
% \end{equation}
% }
\end{proof}

Building on the identities for the occupation measures of the one-sided downward and upward Poissonian singularly controlled processes $X^\downarrow$ and $X^\uparrow$ at constant thresholds, which are established in Theorem~\ref{Theorem: PoissonOccupation}, we now proceed to obtaining the stationary distribution of the two-sided Poissonian singularly controlled process $X^\nu$ at the thresholds $a$ and $y$, satisfying $\mathfrak{l}<a<y<\mathfrak{r}$.
That is, if the process $X^\nu_{T^{(i)}}$ is observed at a Poisson jump time $T^{(i)}$ below $a$ (resp., above $y$), then it is immediately dragged up to $a$ (resp., down to $y$) and started anew; otherwise $X^\nu$ is not controlled.
Namely, we choose the {\it two-sided Poissonian singular control process of threshold type} (cf.~Definition~\ref{def:adm}) 
\begin{equation} \label{nuay}
\nu(a,y) := \nu^+(a) - \nu^-(y) \in \mathcal{V}, 
\end{equation} 
where $\nu^+(a)$ takes the form of $\nu^\uparrow$ defined by \eqref{nuup} for threshold $a$ and $\nu^-(y)$ the form of $\nu^\downarrow$ defined by \eqref{nudown} for threshold $y$. 
Then, the corresponding Poissonian singularly controlled process $X^{\nu(a,y)}$ at thresholds $(a,y)$ satisfies \eqref{Xnu} with %$\nu = \nu^+ - \nu^-$ defined by
\begin{equation} \label{nudownup}
\begin{split}
\nu_t^+ = \nu_t^+(a)
&:= \int_0^{t} (a -X^{\nu(a,y)}_{s-})^+ dN_s 
= \sum_{k=1}^{N(t)} (a -X^{\nu(a,y)}_{T^{(k)}-})^+, 
\quad t \geq 0, 
\quad \text{with} \quad 
\nu^+_0(a) = 0, \\
\nu_t^- = \nu_t^-(y)
&:= \int_0^{t} (X^{\nu(a,y)}_{s-} - y)^+ dN_s 
= \sum_{k=1}^{N(t)} (X^{\nu(a,y)}_{T^{(k)}-} - y)^+, 
\quad t \geq 0, 
\quad \text{with} \quad 
\nu^-_0(y) = 0.
\end{split}
\end{equation}

\begin{theorem} \label{Theorem:TwoSidedDensity}
Suppose that Assumptions~\ref{Ass:X}.(i),(ii),(iv),(v) hold true, and consider $\lambda > 0$ and $a, y \in \mathcal{I}$ satisfying $\mathfrak{l}<a<y<\mathfrak{r}$.
Recall the two-sided Poissonian singularly controlled processes $X^{\nu(a,y)}$ at the thresholds $(a,y)$ satisfying \eqref{Xnu} with \eqref{nudownup}. 
Then the stationary distribution of $X^{\nu(a,y)}$ is given by 
\begin{equation*}
\mu_{\lambda}^{(a,y)}(dx) = \frac{m(dx) }{ m((a,y)) + \dfrac{(\Psi_\lambda  1)(a)}{\psi_\lambda(a)}+ \dfrac{(\Phi_\lambda  1)(y)}{\varphi_\lambda(y)}} \cdot
\begin{cases}
\dfrac{\psi_\lambda(x)}{\psi_\lambda(a)}, & x \leq a, \\
1, & a < x < y, \\
\dfrac{\varphi_\lambda(x)}{\varphi_\lambda(y)}, & x \geq y.
\end{cases}
\end{equation*}
\end{theorem}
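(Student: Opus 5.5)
We will use a renewal-cycle (regenerative) argument, with the occupation measures of Theorem~\ref{Theorem: PoissonOccupation} as building blocks. The process $X^{\nu(a,y)}$ regenerates each time it is reset to $a$ or to $y$, but two types of restart point are awkward. We therefore fix a single regeneration point and measure cycles from it. The natural choice is $y$: a cycle consists of the time to reach the lower region and be reset to $a$, followed by the time to return and be reset to $y$. Equivalently, and more conveniently, we use the embedded two-state Markov chain of reset locations $\{a,y\}$. Resets to the same point do not change the state, so the chain simply alternates between $a$ and $y$. By the renewal-reward theorem, the stationary law is
\[
\mu(A)=\frac{M_y(A)+M_a(A)}{M_y(\mathcal I)+M_a(\mathcal I)},
\]
where $M_y(A)$ is the expected occupation of $A$ starting from $y$ until the first reset to $a$, and $M_a(A)$ is the analogous quantity starting from $a$ until the first reset to $y$. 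Positive recurrence holds because $T^a$ and $T_y$ have finite means by the same formulas.

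The key step is to identify $M_y$ and $M_a$. Starting from $y$, before any upward reset the process is the downward-controlled process $X^\downarrow$ at threshold $y$. The first upward reset happens at the first Poisson time at which the process lies below $a$. This is not literally $V^\uparrow$ or $V^\downarrow$, since both controls are active. We therefore decompose by strong Markov excursions. From $y$, the process runs as $X^\downarrow$ until it hits $a$; this time is finite, with occupation $V^\downarrow$-type quantities killed at $\tau_a$. From $a$ it runs as the uncontrolled diffusion until the first Poisson time at which it is below $a$ ($\to$ reset to $a$, ending $M_y$) or above $y$ ($\to$ reset to $y$, restarting). Conditioning on the first Poisson time $U$ and using resolvents, as in the proof of Theorem~\ref{Theorem: PoissonOccupation}, gives a linear system for the densities. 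An alternative that is probably cleaner is to guess the answer and verify stationarity directly.

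Concretely, we take the candidate density $g$ with respect to $m$ (namely $\psi_\lambda/\psi_\lambda(a)$, $1$, $\varphi_\lambda/\varphi_\lambda(y)$) and verify that $\int (\mathcal G u)\,g\,dm+\lambda\int [u(\Pi x)-u(x)]\,g(x)\,m(dx)=0$ for all test functions $u\in C_c^2(\mathcal I)$, where $\Pi x=a$ for $x<a$, $\Pi x=y$ for $x>y$, and $\Pi x=x$ otherwise. This is the adjoint equation of the generator $\mathcal G+\lambda(\Pi-I)$ of the piecewise-deterministic-jump diffusion.

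The verification proceeds as follows. Integrating by parts twice, using $\mathcal G=\frac{d}{dm}\frac{d}{dS}$, the integral $\int \mathcal G u\, g\,dm$ produces $\int u\,\mathcal G g\,dm$ plus boundary terms at $a$ and $y$ coming from jumps of $g'/S'$. Here $g$ is continuous; the boundary terms at $\mathfrak l,\mathfrak r$ vanish by compact support. On $x<a$ we have $\mathcal G g=\lambda g$, which cancels $-\lambda u g$. The same holds on $x>y$. On $(a,y)$ we have $\mathcal G g=0$ and no jump term. What remains is
\[
u(a)\Big[\lambda\tfrac{(\Psi_\lambda1)(a)}{\psi_\lambda(a)}-\tfrac{\psi_\lambda'(a)}{S'(a)\psi_\lambda(a)}\Big]+u(y)\Big[\lambda\tfrac{(\Phi_\lambda1)(y)}{\varphi_\lambda(y)}+\tfrac{\varphi_\lambda'(y)}{S'(y)\varphi_\lambda(y)}\Big],
\]
and both brackets vanish by the identities $\psi_\lambda'/S'=\lambda\Psi_\lambda1$ and $\varphi_\lambda'/S'=-\lambda\Phi_\lambda1$ used in the previous proof. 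The normalising constant is then the total mass $m((a,y))+(\Psi_\lambda1)(a)/\psi_\lambda(a)+(\Phi_\lambda1)(y)/\varphi_\lambda(y)$, which is finite since $\lambda\Psi_\lambda1(a)=\psi_\lambda'(a)/S'(a)<\infty$.

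The main obstacle is rigor in passing from this infinitesimal invariance to being the (limiting) stationary distribution. One needs a core or martingale-problem argument for the jump generator, or alternatively uniqueness via positive Harris recurrence. I would handle it through the regenerative structure: the process regenerates at resets to $y$, and the expected cycle length is finite, computable from Theorem~\ref{Theorem: PoissonOccupation}. Hence a unique invariant probability exists and equals the cycle occupation ratio. Any invariant law solving the adjoint equation must coincide with it, so the computed $\mu_\lambda^{(a,y)}$ is that law. Convergence from arbitrary $x$ follows from the non-lattice renewal theorem, since the cycle lengths have densities.
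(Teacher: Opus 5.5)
Your adjoint computation is correct. Writing $\mathcal L:=\mathcal G+\lambda(\Pi-I)$, the jumps of $g'/S'$ at $a$ and $y$ are cancelled exactly by the re-injected masses $\lambda\int_{\mathfrak l}^a g\,dm$ and $\lambda\int_y^{\mathfrak r}g\,dm$, through $\psi_\lambda'/S'=\lambda\Psi_\lambda 1$ and $\varphi_\lambda'/S'=-\lambda\Phi_\lambda 1$. This is a genuinely different and much shorter route than the paper's. The paper never uses the generator: it computes the occupation measure of a cycle $a\to y\to a$ from Theorem~\ref{Theorem: PoissonOccupation} by a geometric-trials decomposition, fixing the success probability $\mathbb P_a(\tau_y<T^a)$ by requiring the first-part occupation to vanish on $(y,\mathfrak r)$, and then applies the renewal-reward theorem.

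The gap is in your closing step. The sentence ``any invariant law solving the adjoint equation must coincide with it'' presupposes that your candidate is invariant. You have only shown $\int\mathcal Lu\,d\mu=0$ for $u\in C^2_c(\mathcal I)$, and uniqueness among invariant laws, which is what regeneration gives, says nothing about measures that merely solve the stationary forward equation. Echeverr\'ia's theorem would bridge this, but it needs well-posedness of the martingale problem and, in its standard form, $\mathcal Lu$ vanishing at the boundary. That fails here, since $\mathcal Lu\equiv\lambda u(a)$ near $\mathfrak l$. The cleanest repair reverses the direction. The regenerative law $\mu^*$ is invariant, so Dynkin's formula ($\mathcal Lu$ is bounded) gives $\int\mathcal Lu\,d\mu^*=0$; you then prove that the adjoint equation has only one probability solution. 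In one dimension this is elementary:
test functions supported in $(\mathfrak l,a)$, $(a,y)$, $(y,\mathfrak r)$ give, by elliptic regularity, an $m$-density $p$ with $\mathcal Gp=\lambda p$, $\mathcal Gp=0$, $\mathcal Gp=\lambda p$ respectively. Nonnegativity and integrability, together with $\int_{\mathfrak l}^a\varphi_\lambda\,dm=\int_y^{\mathfrak r}\psi_\lambda\,dm=\infty$ at natural boundaries, force $p\propto\psi_\lambda$ on $(\mathfrak l,a)$ and $p\propto\varphi_\lambda$ on $(y,\mathfrak r)$. Test functions with prescribed $u(a),u'(a)$ (resp.\ $u(y),u'(y)$) then give continuity of $p$ and the flux conditions $p'(a+)/S'(a)=p'(a-)/S'(a)-\lambda\int_{\mathfrak l}^a p\,dm=0$ and, likewise, $p'(y-)=0$. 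Hence $p$ is constant on $(a,y)$ and equals your $g$ up to normalisation.

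Your justification of positive recurrence is also incorrect as stated. Under the theorem's hypotheses, $T_y$ need not have finite mean for the uncontrolled diffusion: if $S(\mathfrak l+)>-\infty$, then $\mathbb P_x(T_y=\infty)\ge 1-h(x;y)>0$ for $x<y$. This already happens for the geometric Brownian motion of Section~\ref{sec:illustration}, where $S'(x)=x^{\eta}$ is integrable at $0$. It is exactly why Corollary~\ref{Corr:u_lambda} imposes $S(\mathfrak l+)=-\infty$ and $m((\mathfrak l,y))<\infty$.

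What your regenerative argument needs is a finite mean cycle length for the two-sided process. This is true, because each control blocks escape towards its boundary, but it has to be proved for the controlled process itself. The paper obtains it as a by-product of its computation, $\mathbb E_a[C_i]=(S(y)-S(a))\big(m((a,y))+(\Psi_\lambda 1)(a)/\psi_\lambda(a)+(\Phi_\lambda 1)(y)/\varphi_\lambda(y)\big)$. In your route you must either carry out the excursion decomposition you sketched, which is essentially the paper's proof, or supply an independent argument. One option is a Foster--Lyapunov bound with $\ell:=\varphi_{\lambda/2}+\psi_{\lambda/2}$, for which $\mathcal L\ell=\lambda\,\ell\circ\Pi-\tfrac{\lambda}{2}\ell\to-\infty$ at both natural boundaries.
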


\begin{proof}
Suppose that the path of the process $X^{\nu}$ is formed by cycles $C_0, C_1, C_2, C_3, \ldots$. 
Each cycle $C_{i}$ starts at $a$ (apart from the initial cycle $C_0$ that starts at $x$ and ends at $a$), runs until the first visit to $y$, and end at the first subsequent visit to $a$. 
Because $a<y$, the interventions \eqref{nudownup} only displace the process towards $[a,y]$: an upward intervention lands at $a$ and a downward one at $y$. 
Hence neither level can be crossed by a jump and both $y$ and $a$ are therefore reached continuously, with no overshoot. 

\smallskip
{\it Large cycles $C_1, C_2, \ldots$.}
We then say that the first part of each cycle $C_i$, $i=1,2,\ldots$, ends when $X^{\nu(a,y)}$ first reaches $y$, and that the second part ends when $X^{\nu(a,y)}$ subsequently reaches $a$. 
Since the cycles begin and end at $a$, the strong Markov property of $X^{\nu}$, together with the memoryless property of the exponential inter-arrival times $\tau_{i}$, implies that $C_{1}, C_{2}, \ldots$ are i.i.d.. 
Moreover $X^{\nu(a,y)}$ remains in $(\mathfrak{l},y]$ during the first part of a cycle and in $[a,\mathfrak{r})$ during the second part. 
Thus, only upward interventions occur during the first part and only downward interventions during the second.

\smallskip
{\it First part of each cycle $C_i$.}
During the first part of each cycle, the process evolves as the uncontrolled diffusion started at $a$ until either it hits $y$ or a Poisson arrival occurs at a time when the process is at or below $a$, in which case it is reset to $a$ and the procedure starts afresh. 
Thus, the first part consists of repeated trials of the uncontrolled diffusion from $a$, each stopped at $\tau_{y} \wedge T^{a}$: 
a trial succeeds at $\tau_{y}$, whereas at $T^{a}$ the process is reset to $a$ and a new trial begins. 
Its expected occupation measure is therefore the expected occupation of a single trial divided by $\mathbb{P}_{a}(\tau_{y}<T^{a})$. 
Equivalently, the first part of each cycle can be viewed as the upward Poissonian singularly controlled process $X^\uparrow$ at threshold $a$ satisfying \eqref{Xup} when started at $a$ and run until it reaches $y$.

\smallskip
{\it Small blocks $D_1,D_2,\ldots$ in the first part of each cycle $C_i$.}
Cut the path of this $X^{\uparrow}$ each time it is controlled upwards to $a$, on the event that $y$ has not been reached (so that we are still in the first part of a cycle). 
These cuts give i.i.d blocks $D_1, D_2, \ldots$, between successive upward interventions. 
Within each $D_j$, the process evolves as the uncontrolled diffusion started at $a$, and hence it reaches $y$ before the next upward intervention precisely when $\tau_y < T^a$, which occurs with probability 
$$
p:=\mathbb{P}_a(\tau_y < T^a).
$$
Thus, if $M$ denotes the index of the first successful trial, the first part consists of $D_1, D_2, \ldots, D_{M-1}$ in full, followed by the block $D_M$ truncated at $\tau_y$. 
To formalise this, we define $\theta_0 := 0$ and $\theta_k$ as the $k$-th intervention time of $X^{\uparrow}$, so that for a Borel set $A \subseteq \mathcal{I}$, the occupation time $O_k(A)$ of $A$ between two successive interventions and the index $S_k$ for the $k$-th block hitting $y$ are defined by 
\begin{equation*}
O_k(A) := \int_{\theta_{k-1}}^{\theta_k} \mathbf{1}(X_t^{\uparrow} \in A) \, dt 
\quad \text{and} \quad 
S_k := \mathbf{1}\Big(\sup_{t \in [\theta_{k-1}, \theta_{k})}  X^{\uparrow}_t \geq y \Big).
\end{equation*}
Since $\theta_1 = T^a$, we have by definition $\mathbb{E}_a[O_1(A)] = V^{\uparrow}(a;a,A)$, which is defined by \eqref{OT}. 

\smallskip
{\it Occupation measure $V^{(1)}(A)$ of the first part of each cycle $C_i$.}
By further defining 
\[
M := \inf \{k \geq 1 \,:\, S_k =1 \} 
\quad \text{and} \quad 
R_M(A) := \int_{\tau_y}^{\theta_M} \mathbf{1}(X_t^{\uparrow} \in A) \, dt ,
\]
we see that the the occupation measure $V^{(1)}(A)$ during the first part of a cycle is
\begin{equation*}
V^{(1)}(A) := \mathbb{E}_a \Big[\sum_{k \leq M} O_k(A) - R_M(A)\Big].
\end{equation*}
Since $X^{\uparrow}$ is reset to $a$ at every intervention time and has the strong Markov property, while the Poisson intervention mechanism is memoryless, the successive blocks $(O_k, S_k)_{k \geq 1}$ are i.i.d copies of $(O_1, S_1)$.
In particular $S_k$ are i.i.d Bernoulli random variables with 
$\mathbb{P}(S_k = 1) =  \mathbb{P}_a(\tau_y < T^a) = p$, and $M$ is thus geometric with parameter $p$ and $\mathbb{E}[M] = 1/p$.
We therefore get that
\begin{align} \label{Eq:OccupationTS}
V^{(1)}(A) 
% &= \mathbb{E} \bigg[ \sum_{k\leq M} O_k(A) - R_M(A) \bigg] \\
&= \sum_{k \leq M} \mathbb{P}(M\geq k) \, \mathbb{E}[O_1(A)] - \mathbb{E}[R_M(A)] 
= \frac{1}{p} \, V^{\uparrow}(a;a,A) - \frac{1}{p} \, \mathbb{E} \big[ \mathbf{1}(S_1=1) \, R_1(A) \big] \notag\\
&= \frac{1}{p} \, V^{\uparrow}(a;a,A) - V^{\uparrow}(y;a,A) 
= \Big( \frac{1}{p} - h(y; a) \Big) \, V^{\uparrow}(a;a,A) - u(y;a,A),
\end{align}
where the first and second equalities follow from the independence of $\{M\geq k\}$ from $O_k(A)$ and the fact that the blocks are i.i.d., 
the third equality from the fact that on $\{S_1=1\}$, the process reaches $y$ before the next intervention, so that by the strong Markov property at $\tau_y$ and the memoryless property of the Poisson intervention mechanism, we get that 
$\mathbb{E} \big[ \mathbf{1}(S_1=1) \, R_1(A) \big] = p \, V^{\uparrow}(y;a,A)$ from \eqref{OT}, 
and finally the fourth equality by $V^{\uparrow}(y;a,A) =  u(y;a,A) + h(y;a) \, V^{\uparrow}(a;a,A)$, where $u(y;a,A)$ is defined by \eqref{OTu}.  

Since the occupation measure $V^{(1)}(A_0)$ of any Borel set $A_0 \subset (y, \mathfrak{r})$ is zero, the right-hand side of equation \eqref{Eq:OccupationTS} must vanish on $A_0$. Combining this with Theorem~\ref{Theorem: PoissonOccupation} and \eqref{Eq:hitprob}, which imply 
\[
V^{\uparrow}(a; a, A_0) 
= \frac{\psi_\lambda(a)}{\lambda(\Psi_\lambda 1)(a)+\frac{\psi_\lambda(a)}{S(\mathfrak{r}-)-S(a)}} \, \int_{A_0} h(z;a) % \frac{S(\mathfrak{r}-)-S(z)}{S(\mathfrak{r}-)-S(a)} 
\, m(dz) , 
\]
and with %Corollary~\ref{Corr:u_lambda}, 
the Green kernel representation and \eqref{Eq:GreenKernelKilled} for $z > y > a$, which imply 
$$
u(y;a,A_0) 
= \int_{A_0} G_0^a(y,z) \, m(dz) 
= (S(y)-S(a))\int_{A_0} h(z;a) \, m(dz), 
$$ 
we eventually get from \eqref{Eq:OccupationTS} that
\begin{align*}
0 
&= \Big( \frac{1}{p} - h(y; a) \Big) \, V^{\uparrow}(a;a,A_0) - u(y;a,A_0) \\
&= \bigg( \Big( \frac{1}{p} - h(y; a) \Big) 
\frac{\psi_\lambda(a)}{\lambda (\Psi_\lambda 1)(a) + \frac{\psi_\lambda(a)}{S(\mathfrak{r}-)-S(a)}} 
- (S(y)-S(a)) \bigg) \int_{A_0} h(x;a) \, m(dx) ,
\end{align*}
which after straightforward calculations, using also \eqref{Eq:hitprob}, yields 
\begin{equation*}
p = \frac{\psi_{\lambda}(a)}{\psi_{\lambda}(a)+\lambda(\Psi_{\lambda}1)(a)(S(y)-S(a))}.
\end{equation*}

Hence, by substituting the value of $p$ back to the equation \eqref{Eq:OccupationTS} implies that the occupation measure has density
\begin{align*}
V^{(1)}(dx) 
&= \frac{\psi_{\lambda}(a) + \lambda(\Psi_{\lambda}1)(a)(S(y)-S(a)) - \psi_{\lambda}(a) \, h(y;a)}{\psi_{\lambda}(a)} \, V^{\uparrow}(a;a,dx) - u(y;a,dx) \\
&= \frac{(S(y)-S(a)) \, \big( \lambda(\Psi_{\lambda}1)(a) +  \frac{\psi_{\lambda}(a)}{S(\mathfrak{r}-)-S(a)} \big)}{\psi_{\lambda}(a)} \, V^{\uparrow}(a;a,dx) - u(y;a,dx),
\end{align*}
where the latter equality follows from \eqref{Eq:hitprob}, namely 
\[
1-h(y;a) 
= 1-\frac{S(\mathfrak{r}-)-S(y)}{S(\mathfrak{r}-)-S(a)}
= \frac{S(y)-S(a)}{S(\mathfrak{r}-)-S(a)}.
\]
In light of Theorem~\ref{Theorem: PoissonOccupation} which gives 
\[
V^{\uparrow}(a;a,dx) = \frac{m(dx)}{\lambda(\Psi_\lambda 1)(a)+\frac{\psi_\lambda(a)}{S(\mathfrak{r}-)-S(a)}} \cdot
\begin{cases}
\psi_\lambda(x), & x \leq a, \\
h(x;a) \, \psi_\lambda(a), & x > a, 
\end{cases}
\]
and the Green kernel representation 
$u(y;a,dx) = G_0^a(y,x) \, m(dx)$ with \eqref{Eq:GreenKernelKilled} which gives 
\begin{align*}
u(y;a,dx) 
% &= \frac{m(dx)}{S(\mathfrak{r-})-S(a)} \cdot \begin{cases}
% 0, & x \leq a, \\
% (S(y \wedge x)-S(a))(S(\mathfrak{r}-)-S(y \vee x)) , & x>a, 
% \end{cases} \\
&= m(dx) \cdot
\begin{cases}
0, & x\le a,\\
h(y;a) \, \bigl(S(x)-S(a)\bigr), & a<x<y,\\
h(x;a) \, \bigl(S(y)-S(a)\bigr), & x \geq y.
\end{cases}
\end{align*}
we conclude that
\begin{equation} \label{Vdx}
V^{(1)}(dx) = \begin{cases}
(S(y)-S(a)) \, \frac{\psi_{\lambda}(x)}{\psi_{\lambda}(a)} \, m(dx), & x \leq a, \\
(S(y)-S(x)) \, m(dx), & a < x < y, \\
0, & x \geq y.
\end{cases}
\end{equation}

\smallskip
{\it Occupation measure $V^{(2)}(A)$ of the second part of each cycle $C_i$.}
The second part of each cycle $C_i$ can be equivalently viewed as the downward Poissonian singularly controlled process $X^\downarrow$ at threshold $y$ satisfying \eqref{Xdown} when started at $y$ and run until it reaches $a$.
By cutting its path each time it is controlled downwards to $y$, on the event that $a$ has not been reached, gives i.i.d.\ blocks, in which the process is the uncontrolled diffusion started at $y$, and hence it reaches $a$ before the next downward intervention $T_y$ with probability 
$$
q:= \mathbb{P}_{y}(\tau_{a}<T_{y}).
$$
Thus, the occupation time of the second part $V^{(2)}(A)$ can be obtained using similar arguments as the ones leading to \eqref{Eq:OccupationTS} for $V^{(1)}(A)$, which yield 
\begin{equation}\label{Eq:OccupationTS2}
V^{(2)}(A) = \left(\frac{1}{q}-h(a;y) \right)V^{\downarrow}(y;y,A)-u_{y}(a;y,A).
\end{equation}
Given that the occupation measure $V^{(2)}(A_0)$ of any Borel set $A_0 \subset (\mathfrak{l},a)$ is zero, we use similar arguments as above, to obtain
\begin{equation*}
q = \frac{\varphi_{\lambda}(y)}{\varphi_{\lambda}(y)+\lambda(\Phi_{\lambda}1)(y)(S(y)-S(a))}.
\end{equation*}
Substituting this back into \eqref{Eq:OccupationTS2} and similar arguments as above gives
\begin{equation*}
V^{(2)}(dx)=
\begin{cases}
0, & x\leq a,\\
(S(x)-S(a))\,m(dx), & a<x<y,\\
(S(y)-S(a))\frac{\varphi_{\lambda}(x)}{\varphi_{\lambda}(y)}\,m(dx), & x\geq y.
\end{cases}
\end{equation*}

\smallskip
{\it Occupation measure $V$ of each cycle $C_i$.}
The occupation measure of each cycle $C_i$ is therefore given by $V = V^{(1)} + V^{(2)}$, so that for every Borel set $A \subseteq \mathcal{I}$, we have 
\begin{equation} \label{Eq:SummedOccupation}
\begin{aligned}
V(A) 
&= \mathbb{E}_{a}\left[ \int_{0}^{C_{i}}\mathbf{1}(X^{\nu(a,y)}_{t} \in A) dt \right] = V^{(1)}(A) + V^{(2)}(A) \\
& = (S(y)-S(a)) \Big(\frac{(\Psi_{\lambda}\mathbf{1}_{A})(a)}{\psi_{\lambda}(a)} + m(A\cap(a,y)) + \frac{(\Phi_{\lambda}\mathbf{1}_{A})(y)}{\varphi_{\lambda}(y)} \Big).
\end{aligned}
\end{equation}

\smallskip
{\it Stationary distribution of $X^{\nu(a,y)}$.}
Following the arguments from renewal theory \citep{Harrison1985, KarlinTaylor1981}, we can express the stationary measure, independently of the initial state $x$, as
\begin{equation*}
\mu_{\lambda}^{(a,y)}(A) 
= \frac{\mathbb{E}_a \big[ \int_0^{C_i} \mathbf{1}(X^{\nu(a,y)}_t \in A) \, dt \big]}{\mathbb{E}_a[C_i]} = \frac{ \frac{V(A)}{S(y)-S(a)}}{\frac{(\Psi_{\lambda}1)(a)}{\psi_{\lambda}(a)} + m((a,y)) + \frac{(\Phi_{\lambda}1)(y)}{\varphi_{\lambda}(y)}}, \quad \text{for Borel sets } A \subset \mathcal{I},
\end{equation*}
where the latter equality follows from \eqref{Eq:SummedOccupation} and by taking $A = \mathcal{I}$ in \eqref{Eq:SummedOccupation} which yields the mean cycle length  
$$
\mathbb{E}_{a}[C_{i}] 
= (S(y)-S(a)) \, \Big( \frac{(\Psi_{\lambda}1)(a)}{\psi_{\lambda}(a)} + m((a,y)) + \frac{(\Phi_{\lambda}1)(y)}{\varphi_{\lambda}(y)} \Big).
$$
Hence, the density $\mu_{\lambda}^{(a,y)}(dx)$ takes the desired form and completes the proof. 
\end{proof}

\section{Constrained Ergodic Singular Control Problem} \label{sec:OCP}

The main aim of this section is to solve the constrained ergodic singular control problem of the representative player, by maximising the long-run average payoff functional $J$ in \eqref{Eq:ObjectiveImproved} over the admissible set $\mathcal{A}$ of controls, defined by \eqref{admissible controls}.
For any given and fixed mean-field parameter $\theta \in \mathbb{R}_{+}$ and initial state $x \in \mathcal{I}$, the objective is to find the maximum long-run average payoff 
\begin{equation} \label{Eq:BetaLambda}
\beta(x;\theta) := \sup_{\nu \in \mathcal{A}} J(x; \nu, \theta).
\end{equation}

\subsection{Restricting to the class of Poissonian singular controls of threshold type} \label{sec:barriers}

In this section, we focus on the class of two-sided Poissonian singular control processes $\nu = \nu(a,y)$ at thresholds $a$ and $y$, with $\mathfrak{l}<a<y<\mathfrak{r}$, so that the corresponding Poissonian singularly controlled process $X^{\nu(a,y)}$ at thresholds $(a,y)$ satisfies \eqref{Xnu} with $\nu(a,y) = \nu^+(a) - \nu^-(y)$ defined by \eqref{nudownup}. 
That is, if the process is observed at a Poisson jump time $T^{(i)}$ below $a$ (resp., above $y$), then it is immediately dragged up to $a$ (resp., down to $y$) and started anew; otherwise the process is not controlled.

The aim of the decision maker in this problem is to thus select an optimal pair of thresholds $(a^*, y^*) = (a^*(\theta), y^*(\theta))$ that maximises 
the associated expected long-run average payoff (cf.~\eqref{Eq:ObjectiveImproved})
\begin{equation} \label{Eq:PayoffAandY}
\begin{aligned}
\kappa(a,y;\theta) 
&:= \limsup_{T \to \infty} \frac{1}{T} \, \mathbb{E}_x \Big[ \int_0^{T} \Big( \pi(X^{\nu(a,y)}_t,\theta) \, dt - q_u \, d\nu_t^+(a) + q_d \, d\nu_t^-(y) \Big) \Big] 
\end{aligned}   
\end{equation}
where $\nu(a,y) = \nu^+(a) - \nu^-(y) \in \mathcal{V}$ takes the form \eqref{nudownup}. 
We henceforth denote the corresponding value of this Poissonian singular control problem of threshold type by 
\begin{equation} \label{Eq:PayoffForAandY}
\kappa^*(\theta) 
:= \kappa(a^*(\theta),y^*(\theta);\theta)
= \max_{(a,y) \in \mathcal{I}^2 \,:\, a<y} \kappa(a,y;\theta) .
\end{equation}
% and its potential by $w(\cdot;\theta)$.}

\begin{theorem} \label{Theorem: Expected long-run average payoff}
Suppose that $\lambda > 0$ and $a, y \in \mathcal{I}$ satisfying $\mathfrak{l}<a<y<\mathfrak{r}$, and recall the two-sided Poissonian singularly controlled processes $X^{\nu(a,y)}$ at the thresholds $(a,y)$ satisfying \eqref{Xnu} with \eqref{nudownup}. 
Then, 
% \begin{equation} \label{Eq:PayoffForAandY}
% \begin{aligned}
% \kappa(a,y;\theta) 
% &= \frac{1}{N(a,y)}\bigg[\int_{a}^{y} \pi(z,\theta) m(dz) +\frac{(\Phi_\lambda  \pi_d(\cdot,\theta))(y)}{\varphi_\lambda(y)}  + \frac{(\Psi_\lambda  \pi_u(\cdot,\theta))(a)}{\psi_\lambda(a)} - \frac{q_u}{S'(a)} + \frac{q_d}{S'(y)} \bigg],\\
% \end{aligned}   
% \end{equation}
% where $N(a,y)$ is given by 
% \begin{equation*}
% N(a,y) := m((a,y)) + \dfrac{(\Psi_\lambda  1)(a)}{\psi_\lambda(a)}+ \dfrac{(\Phi_\lambda  1)(y)}{\varphi_\lambda(y)}.
% \end{equation*}
the thresholds $(a^*,y^*) \equiv (a^*(\theta),y^*(\theta))$ maximising \eqref{Eq:PayoffAandY} satisfy the %first-order 
conditions
\begin{equation} \label{eq:FOC1}
\begin{aligned} 
&\frac{\dfrac{q_d}{S'(y^*)} - \dfrac{q_u}{S'(a^*)} + \int^{y^*}_{a^*} \pi(z,\theta)m(dz)}{m((a^*,y^*))} %\\& 
= \frac{(\Psi_\lambda  \pi_u(\cdot,\theta))(a^*)}{(\Psi_\lambda  1)(a^*)} 
= \frac{(\Phi_\lambda  \pi_d(\cdot,\theta))(y^*)}{(\Phi_\lambda  1)(y^*)},
\end{aligned}
\end{equation}
and the resulting long-run average payoff \eqref{Eq:PayoffForAandY} associated to $(a^*,y^*) \equiv (a^*(\theta),y^*(\theta))$ is given by 
\begin{equation*}
\kappa^*(\theta) = \frac{1}{m((a^*(\theta),y^*(\theta)))} \bigg(\frac{q_d}{S'(y^*(\theta))} - \frac{q_u}{S'(a^*(\theta))} +\int^{y^*(\theta)}_{a^*(\theta)} \pi(z,\theta)m(dz) \bigg).
\end{equation*}
\end{theorem}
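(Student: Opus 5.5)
Our plan is to first compute $\kappa(a,y;\theta)$ in closed form for an arbitrary pair $a<y$, and then derive \eqref{eq:FOC1} as the first-order conditions of this explicit function.

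\textbf{Step 1: closed form of $\kappa(a,y;\theta)$.} We use the renewal-cycle decomposition from the proof of Theorem~\ref{Theorem:TwoSidedDensity}. By the renewal-reward theorem, the running part of \eqref{Eq:PayoffAandY} equals $\int \pi(x,\theta)\,\mu^{(a,y)}_\lambda(dx)$, and this is independent of $x$.

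For the control terms, upward interventions occur at rate $\lambda$ while the process is below $a$, with size $a-X_{t-}$. By the compensator of $N$ and stationarity, the long-run rate of $\nu^+$ is therefore $\lambda\int_{\mathfrak l}^a (a-z)\,\mu^{(a,y)}_\lambda(dz)$. Similarly, the long-run rate of $\nu^-$ is $\lambda\int_y^{\mathfrak r}(z-y)\,\mu^{(a,y)}_\lambda(dz)$.

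Inserting the density of Theorem~\ref{Theorem:TwoSidedDensity}, the normalising constant
\[
N(a,y):=m((a,y))+\frac{(\Psi_\lambda 1)(a)}{\psi_\lambda(a)}+\frac{(\Phi_\lambda 1)(y)}{\varphi_\lambda(y)}
\]
appears. We then use the identities \eqref{Eq:zyInt} and \eqref{Eq:zaInt} to rewrite $\lambda\int (z-y)\varphi_\lambda\,dm$ and $\lambda\int (z-a)\psi_\lambda\,dm$. This gives
\[
\kappa(a,y;\theta)=\frac{1}{N(a,y)}\Big[\int_a^y\pi\,dm+\frac{(\Phi_\lambda\pi_d)(y)}{\varphi_\lambda(y)}+\frac{(\Psi_\lambda\pi_u)(a)}{\psi_\lambda(a)}-\frac{q_u}{S'(a)}+\frac{q_d}{S'(y)}\Big].
\]
Justifying that the $\limsup$ of expected averages equals the stationary average needs integrability of $\pi$ and of $\mathrm{id}$ under $\mu^{(a,y)}_\lambda$ (Assumption~\ref{Ass:Payoff}(iv) and $\mathrm{id}\in\mathcal L^1$), together with finite mean cycle length.

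\textbf{Step 2: first-order conditions.} Write $\kappa=P/N$, where $P$ is the bracket above. At an interior maximiser we have $\partial_a P=\kappa\,\partial_a N$ and $\partial_y P=\kappa\,\partial_y N$.

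To differentiate, we use $(\Psi_\lambda g)'(a)=\psi_\lambda(a)g(a)m'(a)$ and $\frac{d}{da}\frac{1}{S'(a)}=b(a)m'(a)$ (from \eqref{Eq:DriftToScale}). Then
\[
\partial_a\Big(\frac{(\Psi_\lambda g)(a)}{\psi_\lambda(a)}\Big)=g(a)m'(a)-\frac{\psi'_\lambda(a)}{\psi_\lambda(a)^2}(\Psi_\lambda g)(a).
\]
The terms $-\pi(a)m'(a)$, $+\pi_u(a)m'(a)$ and $-q_u b(a)m'(a)$ in $\partial_aP$ cancel. Likewise the terms $-m'(a)$ and $+m'(a)$ in $\partial_aN$ cancel. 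We are left with
\[
\partial_aP=-\frac{\psi_\lambda'(a)}{\psi_\lambda(a)^2}(\Psi_\lambda\pi_u)(a),\qquad \partial_aN=-\frac{\psi_\lambda'(a)}{\psi_\lambda(a)^2}(\Psi_\lambda 1)(a).
\]
Since $\psi'_\lambda>0$, the condition in $a$ gives $\kappa=(\Psi_\lambda\pi_u)(a^*)/(\Psi_\lambda1)(a^*)$. Symmetrically, since $\varphi_\lambda'<0$, the condition in $y$ gives $\kappa=(\Phi_\lambda\pi_d)(y^*)/(\Phi_\lambda1)(y^*)$.

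\textbf{Step 3: the value.} Substituting $(\Psi_\lambda\pi_u)(a^*)=\kappa(\Psi_\lambda1)(a^*)$ and $(\Phi_\lambda\pi_d)(y^*)=\kappa(\Phi_\lambda 1)(y^*)$ into $P=\kappa N$ makes the non-local terms cancel on both sides. This leaves
\[
\kappa\, m((a^*,y^*))=\frac{q_d}{S'(y^*)}-\frac{q_u}{S'(a^*)}+\int_{a^*}^{y^*}\pi\,dm,
\]
which is both the first equality in \eqref{eq:FOC1} and the stated formula for $\kappa^*(\theta)$.

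\textbf{Main obstacle.} The main obstacle is Step 1: rigorously identifying the long-run averages of $\nu^\pm$ with the stationary intensities, which requires a renewal-reward argument for the cycles $C_i$ with integrable cycle rewards. Step 2 presumes the maximiser is interior, and the statement itself only asks for the necessary conditions.
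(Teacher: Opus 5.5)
Your proposal is correct and follows essentially the same route as the paper: you compute the closed form of $\kappa(a,y;\theta)$ via the compensation formula, the stationary density of Theorem~\ref{Theorem:TwoSidedDensity} and the identities \eqref{Eq:zyInt}--\eqref{Eq:zaInt}, and then differentiate $P/N$, where your relation $\partial_a P=\kappa\,\partial_a N$ is exactly the paper's \eqref{kPsiPhi}. The only difference is that you read off $\kappa=(\Psi_\lambda\pi_u)(a^*)/(\Psi_\lambda 1)(a^*)=(\Phi_\lambda\pi_d)(y^*)/(\Phi_\lambda 1)(y^*)$ directly and substitute into $P=\kappa N$, which is a cleaner but equivalent shortcut for the paper's detour through \eqref{Eq:aFOC}--\eqref{Eq:yFOC} and their difference.
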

\begin{proof}
We prove the result in the following two steps. 

\smallskip
{\it Step 1: An analytical expression for the expected long-run average payoff from \eqref{Eq:PayoffAandY}}. 
We find by the compensation formula \cite[Theorem 4.4]{kyprianou2014}, %the standard ergodic results for diffusion \cite[pp. 36--38]{BorodinSalminen} that
the intensity $\lambda$ of $N$ and the stationary distribution $\mu_{\lambda}^{(a,y)}$ of $X^{\nu(a,y)}$ from  Theorem~\ref{Theorem:TwoSidedDensity}, that
\begin{align*}
\lim_{T \to \infty} \frac{1}{T} \, \mathbb{E}_x \Big[ \int_0^T  d\nu_s^+(a) \Big] 
&= \lim_{T \to \infty} \frac{1}{T} \, \mathbb{E}_x \Big[ \int_0^T (a-X^{\nu(a,y)}_{t-})^+  dN_t  \Big] \\
&= \lim_{T \to \infty} \frac{\lambda}{T} \, \mathbb{E}_x \Big[ \int_0^T (a-X^{\nu}_{t-})^+ dt \Big] 
% = \lambda \, \mathbb{E}_{\mu_{\lambda}^{(a,y)}}[(a-X^{\nu(a,y)})^+] 
= \lambda \int_{\mathfrak{l}}^a (a-z) %(z-a) 
\, \mu_{\lambda}^{(a,y)}(dz), \\
%\end{align*}
%\begin{align*}
\lim_{T \to \infty} \frac{1}{T} \, \mathbb{E}_x \Big[ \int_0^T  d\nu_t^-(y) \Big] 
&= \lim_{T \to \infty} \frac{1}{T} \, \mathbb{E}_x \Big[ \int_0^T (X^{\nu(a,y)}_{t-}-y)^+  dN_t  \Big] \\
&= \lim_{T \to \infty} \frac{\lambda}{T} \, \mathbb{E}_x \Big[ \int_0^T (X^{\nu(a,y)}_{t-}-y)^+  dt  \Big] 
% = \lambda \mathbb{E}_{\mu_{\lambda}^{(a,y)}}[(X^{\nu}-y)^+] 
= \lambda \int_y^{\mathfrak{r}} (z-y) \, \mu_{\lambda}^{(a,y)}(dz).
\end{align*}
%where $\mu_{\lambda}^{(a,y)}$ is the stationary distribution given in Theorem~\ref{Theorem:TwoSidedDensity}.
Thus, the average payoff from \eqref{Eq:PayoffAandY} takes the form
\begin{equation*}
\kappa(a,y;\theta) 
= \int_{\mathfrak{l}}^{\mathfrak{r}} \pi(z,\theta) \, \mu_{\lambda}^{(a,y)}(dz) 
+ \lambda \, q_u \int_{\mathfrak{l}}^a (z-a) \, \mu_{\lambda}^{(a,y)}(dz)
+ \lambda \, q_d \int_y^{\mathfrak{r}} (z-y) \, \mu_{\lambda}^{(a,y)}(dz).
\end{equation*}
Substituting the stationary distribution from Theorem \ref{Theorem:TwoSidedDensity}, we find
\begin{align} \label{kN}
\kappa(a,y;\theta) \, N(a,y) 
&= \int_{a}^{y} \pi(z,\theta) \, m(dz) 
+ \int_{\mathfrak{l}}^{a} \pi(z,\theta) \, \frac{\psi_\lambda(z)}{\psi_\lambda(a)} \, m(dz) 
+ \int_y^{\mathfrak{r}} \pi(z,\theta) \, \frac{\varphi_\lambda (z)}{\varphi_\lambda(y)} \, m(dz) \notag\\ 
&\quad + \lambda \, q_u \int_{\mathfrak{l}}^a (z-a) \, \frac{\psi_\lambda(z)}{\psi_\lambda(a)} \, m(dz) + \lambda \, q_d \int_y^{\mathfrak{r}} (z-y) \, \frac{\varphi_\lambda (z)}{\varphi_\lambda(y)} \, m(dz),
\end{align}
where $N(a,y) := m((a,y)) + \frac{(\Psi_\lambda 1)(a)}{\psi_\lambda(a)}+ \frac{(\Phi_\lambda 1)(y)}{\varphi_\lambda(y)}$.

% {\color{olive}
% Then, notice that by \eqref{Eq:GeneratorCanonical} and the relation
% $\mathcal{G}\varphi_\lambda=\lambda\varphi_\lambda$, the definitions of $S'$ and $m'$ from \eqref{Eq:Sm}, we obtain by 
% differentiation that 
% \begin{align*}
% \frac{d}{dx} \bigg( x \, \frac{\varphi_{\lambda}'(x)}{S'(x)} - \frac{\varphi_{\lambda}(x)}{S'(x)} \bigg) 
% = -m'(x) \, \varphi_{\lambda}(x) \, \big( b(x)- \lambda x \big).
% \end{align*}
% Then, the fundamental theorem of calculus and the natural boundary $\mathfrak{r}$ yields
% \begin{equation*} 
% y \, \frac{\varphi_{\lambda}'(y)}{S'(y)} - \frac{\varphi_{\lambda}(y)}{S'(y)} 
% = \int_{y}^{\mathfrak{r}} m'(z) \, \varphi_{\lambda}(z) \, \big( b(z)- \lambda z \big) \, dz.
% \end{equation*} 
% Then using the identity $\frac{d \varphi_\lambda}{dS}(x) = - \lambda (\Phi_{\lambda}1)(x)$ we finally conclude that
% \begin{equation} \label{Eq:zyInt}
% \lambda \int_y^{\mathfrak{r}} (z-y) \, \varphi_\lambda(z) \, m(dz) 
% = \int_y^{\mathfrak{r}} b(z) \, \varphi_\lambda(z) \, m(dz) + \frac{\varphi_\lambda(y)}{S'(y)}. 
% \end{equation}
% Similarly we have
% \begin{equation} \label{Eq:zaInt}
% \lambda \int_{\mathfrak{l}}^a (z-a) \, \psi_\lambda(z) \, m(dz) 
% = \int_{\mathfrak{l}}^a b(z) \, \psi_\lambda(z) \, m(dz) - \frac{\psi_\lambda(a)}{S'(a)}.
% \end{equation}
% }
Substituting \eqref{Eq:zyInt}--\eqref{Eq:zaInt} into \eqref{kN} and
collecting terms yields
\begin{align*}
\kappa(a,y;\theta) \, N(a,y) 
&= \int_{a}^{y} \pi(z,\theta) m(dz)  + \frac{\int_{\mathfrak{l}}^{a} \pi(z,\theta)  \psi_\lambda(z) m(dz)}{\psi_\lambda(a)} + \frac{\int_y^{\mathfrak{r}} \pi(z,\theta)  \varphi_\lambda (z) m(dz)}{\varphi_\lambda(y)} \\ 
&\hspace{-8mm}+ \frac{q_u}{\psi_\lambda(a)} \Big( \int_{\mathfrak{l}}^a b(z)  \psi_\lambda(z) m(dz) 
-\frac{\psi_\lambda(a)}{S'(a)} \Big) + \frac{q_d}{\varphi_\lambda(y)} \Big( \int_y^{\mathfrak{r}} b(z)  \varphi_\lambda (z) m(dz) +\frac{\varphi_\lambda(y)}{S'(y)} \Big) \\
\label{Eq:Payoff_formula}
&= \int_{a}^{y} \pi(z,\theta) \, m(dz) + \frac{(\Phi_\lambda  \pi_d(\cdot,\theta))(y)}{\varphi_\lambda(y)} + \frac{(\Psi_\lambda  \pi_u(\cdot,\theta))(a)}{\psi_\lambda(a)} + \frac{q_d}{S'(y)} - \frac{q_u}{S'(a)}, 
\end{align*}
which results into takes the form
\begin{equation} \label{Eq:PayoffAandYnew}
\begin{aligned}
\kappa(a,y;\theta) 
&= \frac{\int_{a}^{y} \pi(z,\theta) m(dz) + \frac{(\Psi_\lambda  \pi_u(\cdot,\theta))(a)}{\psi_\lambda(a)} +\frac{(\Phi_\lambda  \pi_d(\cdot,\theta))(y)}{\varphi_\lambda(y)} - \frac{q_u}{S'(a)} + \frac{q_d}{S'(y)}}{m((a,y)) + \frac{(\Psi_\lambda  1)(a)}{\psi_\lambda(a)}+ \frac{(\Phi_\lambda  1)(y)}{\varphi_\lambda(y)}}.
\end{aligned}   
\end{equation}

\smallskip
{\it Conditions of optimality for $(a^*(\theta),y^*(\theta))$.}
To find the first-order conditions for the optimal thresholds $a^*$ and $y^*$, we first differentiate the expression \eqref{Eq:PayoffAandYnew} of $\kappa$ from Step 1 with respect to $a$, to get 
\begin{equation} \label{kPsiPhi}
\kappa(a,y;\theta) \, \Big( -\frac{\psi_\lambda '(a)}{\psi_\lambda ^2(a)} \, (\Psi_\lambda  1)(a) \Big) + N(a,y) \, \frac{\partial}{\partial a} \kappa(a,y;\theta)  
= -\frac{\psi_\lambda '(a)}{\psi_\lambda(a)^2}(\Psi_\lambda  \pi_u(\cdot,\theta))(a) ,
\end{equation}
hence the condition $\frac{\partial}{\partial a} \kappa(a,y;\theta) = 0$ holds true if and only if
\begin{align*}
&(\Psi_\lambda  \pi_u(\cdot,\theta))(a) \Big( m((a,y)) + \frac{(\Psi_\lambda 1)(a)}{\psi_\lambda(a)} + \frac{(\Phi_\lambda 1)(y)}{\varphi_\lambda(y)} \Big) \\
& - (\Psi_\lambda  1)(a) \Big( \int_{a}^{y} \pi(z,\theta) \, m(dz) + \frac{(\Phi_\lambda  \pi_d(\cdot,\theta))(y)}{\varphi_\lambda(y)} + \frac{(\Psi_\lambda \pi_u(\cdot,\theta))(a)}{\psi_\lambda(a)} + \frac{q_d}{S'(y)} - \frac{q_u}{S'(a)} \Big) = 0.
\end{align*}
Straightforward manipulations yield 
\begin{equation} \label{Eq:aFOC}
\begin{aligned}
& \frac{(\Psi_\lambda  \pi_u(\cdot,\theta))(a)}{(\Psi_\lambda  1)(a)} \bigg(1 + \frac{(\Phi_\lambda 1)(y)}{\varphi_\lambda(y)m((a,y))}\bigg) \\ & 
= \frac{(\Phi_\lambda  \pi_d(\cdot,\theta))(y)}{\varphi_\lambda(y)m((a,y))} + \frac{1}{m((a,y))}\bigg( \int_a^y \pi(z,\theta)m(dz)+  \frac{q_d}{S'(y)} - \frac{q_u}{S'(a)}\bigg).
\end{aligned}
\end{equation}
Differentiating with respect to $y$ gives the analogous result
\begin{equation} \label{Eq:yFOC}
\begin{aligned}
&\frac{(\Phi_\lambda  \pi_d(\cdot,\theta))(y)}{(\Phi_\lambda  1)(y)} \bigg( 1 + \frac{(\Psi_\lambda 1)(a)}{\psi_\lambda(a)m((a,y))} \bigg) \\
&= \frac{(\Psi_\lambda  \pi_u(\cdot,\theta))(a)}{\psi_\lambda(a)m((a,y))} + \frac{1}{m((a,y))} \bigg( \int_a^y \pi(z,\theta) m(dz) + \frac{q_d}{S'(y)} - \frac{q_u}{S'(a)}\bigg).
\end{aligned}
\end{equation}
% Subtracting these first-order conditions from each other we get
% \begin{equation*}
% \begin{aligned}
%     & \frac{(\Psi_\lambda  \pi_u(\cdot,\theta))(a)}{(\Psi_\lambda  1)(a)} \bigg(1 + \dfrac{(\Phi_\lambda 1)(y)}{\varphi_\lambda(y)m((a,y))}\bigg)-\frac{(\Phi_\lambda  \pi_d(\cdot,\theta))(y)}{(\Phi_\lambda  1)(y)} \bigg( 1 + \frac{(\Psi_\lambda  1)(a)}{\psi_\lambda(a)m((a,y))} \bigg) 
%     \\
%     & = \frac{(\Phi_\lambda  \pi_d(\cdot,\theta))(y)}{\varphi_\lambda(y)m((a,y))}- \frac{(\Psi_\lambda  \pi_u(\cdot,\theta))(a)}{\psi_\lambda(a)m((a,y))},
% \end{aligned}
% \end{equation*}
Subtracting \eqref{Eq:yFOC} from \eqref{Eq:aFOC} and collecting common terms yields
\begin{equation*}
\bigg(\frac{(\Psi_\lambda  \pi_u(\cdot,\theta))(a)}{(\Psi_\lambda  1)(a)} -  \frac{(\Phi_\lambda  \pi_d(\cdot,\theta))(y)}{(\Phi_\lambda  1)(y)}\bigg)\bigg( 1+ \frac{(\Psi_\lambda 1)(a)}{\psi_\lambda(a)m((a,y))} + \frac{(\Phi_\lambda 1)(y)}{\varphi_\lambda(y)m((a,y))} \bigg) = 0,
\end{equation*}
and since the second factor is strictly positive, it follows that
\begin{equation*}
\frac{(\Psi_\lambda  \pi_u(\cdot,\theta))(a)}{(\Psi_\lambda  1)(a)} =  \frac{(\Phi_\lambda  \pi_d(\cdot,\theta))(y)}{(\Phi_\lambda  1)(y)}.
\end{equation*}
Using this equation in \eqref{Eq:yFOC}--\eqref{Eq:aFOC} then yields that the optimal pair $(a^*(\theta),y^*(\theta))$ must satisfy \eqref{eq:FOC1}. 

\smallskip
{\it Value associated with $(a^*(\theta),y^*(\theta))$.}
Using conditions \eqref{eq:FOC1} together with \eqref{kPsiPhi} and the fact that $\frac{\partial}{\partial a} \kappa(a^*(\theta),y^*(\theta);\theta) = 0$, which yield the desired expression for $\kappa^*(\theta)$ and complete the proof.
\end{proof}

\begin{remark}
Notably, the long-run average payoff $\kappa^*(\theta)$ obtained in Theorem~\ref{Theorem: Expected long-run average payoff}.(iii) has the same qualitative form as that arising in the unconstrained singular control problem (see, e.g., \citep{Alvarez2018}), where the optimal thresholds serve as reflecting boundaries. 
However, the threshold values $(a^*(\theta), y^*(\theta))$ differ between the two settings.
\end{remark}

In light of the conditions \eqref{eq:FOC1} derived in Theorem \ref{Theorem: Expected long-run average payoff}, to establish the existence of (optimal) thresholds $(a^*(\theta),y^*(\theta))$ maximising \eqref{Eq:PayoffForAandY}, we must first study the following functions 
\begin{equation} \label{Hdu} 
H_d(x) := \frac{(\Phi_\lambda  \pi_d(\cdot,\theta))(x)}{(\Phi_\lambda  1)(x)}, \quad H_u(x):= \frac{(\Psi_\lambda  \pi_u(\cdot,\theta))(x)}{(\Psi_\lambda  1)(x)}, 
\quad x \in \mathcal{I}.
\end{equation}
The next lemma presents useful properties of these functions.

\begin{lemma} \label{Lemma: FractionProperties}
Recall the functions $H_d$ and $H_u$ defined by \eqref{Hdu} and the unique global maximisers $\hat{x}_d$ and $\hat{x}_u$ of $\pi_d$ and $\pi_u$, respectively, from Assumption~\ref{Ass:Payoff}. 
Then, 
\begin{enumerate}[\rm (i)]
\item There exists a unique $\hat{z}_d = \argmax \{ H_d\} < \hat{x}_d$ such that $H_d$ is increasing on $(\mathfrak{l}, \hat{z}_d)$ and decreasing on $(\hat{z}_d, \mathfrak{r})$. 
Furthermore, $\hat{z}_d$ is the unique solution of $\pi_d(\hat{z}_d,\theta) = H_d(\hat{z}_d)$;

\item  There exists a unique $\hat{z}_u = \argmax \{ H_u(x) \} > \hat{x}_u$ such that $ H_u$ is increasing on $(\mathfrak{l}, \hat{z}_u)$ and decreasing on $(\hat{z}_u, \mathfrak{r})$. Furthermore, $\hat{z}_u$ is the unique solution of $\pi_u(\hat{z}_u,\theta) = H_u(\hat{z}_u)$;

\item $\lim_{x \to \mathfrak{l}} H_d(x) = \pi_d(\mathfrak{l}+,\theta)$ and $\lim_{x \to \mathfrak{l}} H_u(x) = \pi_u(\mathfrak{l}+,\theta)$;

\item $\lim_{x \to  \mathfrak{r}} H_d(x) = \lim_{x \to \mathfrak{r}} H_u(x) = - \infty.$
\end{enumerate}
\end{lemma}

\begin{proof}
We focus on proving the results concerning $H_d$ as the ones concerning $H_u$ can be proved by using completely analogous arguments.

We first establish few auxiliary properties. 
By Assumption \ref{Ass:Payoff}.(iii) and $q_u > q_d$, we get 
\begin{equation*}
0 \leq \pi_d(\mathfrak{l}+,\theta)-\pi_u(\mathfrak{l}+,\theta) = (q_d-q_u)b(\mathfrak{l+})
\quad \Rightarrow \quad 
b(\mathfrak{l}+) \leq 0.
\end{equation*}
Since $b'(x) \leq -c$ for $x \in \mathcal{I}$ by Assumption \ref{Ass:X}.(iii), we therefore get that $b(x) < 0$ for $x \in \mathcal{I}$. 
This shows
that $S''(x) = -\frac{2b(x)}{\sigma^2(x)}S'(x) > 0$, hence  
\begin{equation} \label{Eq:S'}
S' \text{ is increasing} 
\quad \Rightarrow \quad 
\lambda(\Phi_\lambda1)(x) = - \frac{\varphi_\lambda'(x)}{S'(x)} \geq - \frac{\varphi_\lambda'(x)}{S'(a)}, \quad x \leq a. 
\end{equation}
Also, by Assumption~\ref{Ass:Payoff}.(iii),
$\liminf_{x \downarrow \mathfrak{l}} \frac{\partial}{\partial x} \pi_u(x, \theta) > 0$, which means that there exist $\varepsilon > 0$ and $a \in (\mathfrak{l}, \hat x_d)$ such that $\frac{\partial}{\partial x} \pi_u(x, \theta) \geq \varepsilon$ on $(\mathfrak{l}, a)$. 
Using this together with $q_u > q_d$ and $b' \leq -c$ by Assumption~\ref{Ass:X}.(iii), we obtain
\begin{equation} \label{Eq:pid_-x}
\frac{\partial}{\partial x} \pi_d(x, \theta) =\frac{\partial}{\partial x} \pi_u(x, \theta) + (q_d - q_u) b'(x) \geq \varepsilon, \quad x \in (\mathfrak{l}, a).
\end{equation}

Then, using \eqref{Eq:S'} and the limit
$\varphi_\lambda(\mathfrak{l}+)=\infty$ (as $\mathfrak{l}$ is natural), we obtain by \eqref{Eq:pid_-x} that
\begin{equation*}
\int_x^a \frac{\partial}{\partial z} \pi_d(z, \theta) \, (\Phi_\lambda 1)(z) \, dz 
\geq \frac{\varepsilon}{\lambda S'(a)} (\varphi_\lambda(x) - \varphi_\lambda(a)) 
\rightarrow \infty
\quad \text{ as } x \downarrow \mathfrak{l}.
\end{equation*}
Moreover, we know from differentiation that  
\begin{align*}
& \frac{d}{dx}\big[
(\Phi_\lambda1)(x) \big(H_d(x)-\pi_d(x,\theta)\big)
\big] = - \frac{\partial}{\partial x}\pi_d(x,\theta) \,(\Phi_\lambda1)(x), \quad x \in \mathcal{I}.
\end{align*}
This identity implies firstly, after integrating from $x$ to $a$ and using the above limit, that
\begin{equation} \label{Philinfty}
(\Phi_\lambda1)(x) \big( H_d(x) - \pi_d(x,\theta) \big) \rightarrow \infty \quad \text{ as } x \downarrow \mathfrak{l},
\end{equation}
and secondly, after using $\pi_d(\cdot,\theta)$ is increasing on $(\mathfrak{l},\hat x_d)$ from Assumption~\ref{Ass:Payoff}.(iii), that 
\begin{equation} \label{Phidecr}
x \mapsto (\Phi_\lambda1)(x)(H_d(x) - \pi_d(x,\theta)) \text{ is decreasing on } (\mathfrak{l},\hat x_d).
\end{equation}
Finally, given that $\hat{x}_d$ is the unique global maximiser of $\pi_d(\cdot,\theta)$ from Assumption~\ref{Ass:Payoff}.(iii), we also have from \eqref{Hdu} and \eqref{PsiPhi} that 
\begin{align} \label{Phihatx}
\begin{split}
(\Phi_\lambda1)(\hat{x}_d)(H_d(\hat{x}_d) - \pi_d(\hat{x}_d,\theta)) 
&= (\Phi_\lambda  \pi_d(\cdot,\theta))(\hat{x}_d) 
- (\Phi_\lambda1)(\hat{x}_d) \, \pi_d(\hat{x}_d,\theta)) \\
&= \int_{\hat{x}_d}^{\mathfrak{r}} \varphi_{\lambda}(z) \big(\pi_d(z, \theta)-\pi_d(\hat{x}_d, \theta) \big) m(dz) < 0.
\end{split}
\end{align}

Consequently, \eqref{Philinfty}--\eqref{Phihatx} imply thanks to the intermediate value theorem, that there exists exactly one zero $\hat{z}_d \in (\mathfrak{l}, \hat{x}_d)$ of the map in \eqref{Phidecr}. 
Since $(\Phi_\lambda1)(x) > 0$, we conclude that $x = \hat{z}_d \in (\mathfrak{l}, \hat{x}_d)$ is the unique solution of $H_d(x)=\pi_d(x, \theta)$, such that 
\begin{align} \label{Eq:Hdpi1}
H_d(x) - \pi_d(x, \theta) \begin{cases}
>0, & x \in (\mathfrak{l}, \hat{z}_d), \\
<0, & x \in (\hat{z}_d, \hat{x}_d).
\end{cases}
\end{align}
Moreover, since $\pi_d(\cdot,\theta)$ is decreasing on $(\hat{x}_d,\mathfrak{r})$ and $\lim_{x \to \mathfrak{r}} \pi_d(x,\theta) =  -\infty$ by Assumption~\ref{Ass:Payoff}.(iii), we have 
\begin{equation} \label{Eq:Hdpi2}
H_d(x) 
= \frac{(\Phi_\lambda  \pi_d(\cdot,\theta))(x)}{(\Phi_\lambda  1)(x)} 
< \frac{\pi_d(x,\theta)(\Phi_\lambda 1)(x)}{(\Phi_\lambda 1)(x)} 
= \pi_d(x,\theta),
\quad x \geq \hat{x}_d.
\end{equation}
Combining \eqref{Eq:Hdpi1}--\eqref{Eq:Hdpi2} together with the calculation
\begin{align} \label{eq:Rprime}
& H_d'(x) = \frac{\varphi_\lambda(x) m'(x)}{(\Phi_\lambda  1)(x)} \big( H_d(x) - \pi_d(x,\theta) \big), , \quad x \in \mathcal{I}, 
\end{align}
we conclude that that $H_d$ is increasing on %$(\mathfrak{l}, \hat{x}_d)$. 
$(\mathfrak{l}, \hat{z}_d)$ and decreasing on $(\hat{z}_d, \mathfrak{r})$, and that $\hat{z}_d \in (\mathfrak{l}, \hat{x}_d)$ is the unique solution of $H_d(\cdot)=\pi_d(\cdot,\theta)$ on the whole $\mathcal{I}$, which proves part (i).

Thanks to the monotonicity of $S'$ from \eqref{Eq:S'} and $\lambda(\Phi_\lambda1) = -\varphi_\lambda'/S'$, we have
\begin{equation*}
\varphi_{\lambda}(x)- \varphi_{\lambda}(a) = \lambda \int_x^a S'(z)(\Phi_{\lambda}1)(z)dz \leq \lambda S'(a)(a-\mathfrak{l})(\Phi_{\lambda}1)(x),
\end{equation*}
which implies thanks to $\varphi_{\lambda}(\mathfrak{l+}) = \infty$ (since $\mathfrak{l}$ is natural) that $(\Phi_\lambda1)(x) \to \infty$ as $x \downarrow \mathfrak{l}$. 
Moreover, thanks to the limit in \eqref{Philinfty}, the positivity of $\pi_d(\cdot, \theta)$ near $\mathfrak{l}$ thanks to Assumption~\ref{Ass:Payoff}.(iii), and  
$$
(\Phi_{\lambda} \pi_d(\cdot, \theta))(x) = (\Phi_{\lambda}1)(x)(H_d(x)-\pi_d(x, \theta)) + \pi_d(x, \theta)(\Phi_{\lambda}1)(x),
$$
we further conclude that $(\Phi_\lambda \pi_d(\cdot,\theta))(x) \to \infty$ as $x \downarrow \mathfrak{l}$.
Hence, L'Hopital's rule implies that
\begin{equation*}
\lim_{x \downarrow \mathfrak{l}} H_d(x) = \lim_{x \downarrow \mathfrak{l}} \frac{-\varphi_\lambda(x) \, \pi_d(x, \theta) \, m'(x)}{-\varphi_\lambda(x) \, m'(x)} = \pi_d(\mathfrak{l}+,\theta),
\end{equation*}
which proves part (iii).

Finally, taking the limits on both sides of inequality \eqref{Eq:Hdpi2} also yields by Assumption~\ref{Ass:Payoff}.(iii) that $\lim_{x \to \mathfrak{r}} H_d(x) = -\infty$, proving (iv) and completing the proof. 
\end{proof}

In light of the properties established in Lemma~\ref{Lemma: FractionProperties} for the functions $H_d$ and $H_u$, we are now ready to prove that the system of equations \eqref{eq:FOC1} obtained in Theorem~\ref{Theorem: Expected long-run average payoff} for an optimal pair of thresholds admit a unique solution. 

\begin{theorem} \label{thm:barriers}
Recall the system of equations in the optimality condition \eqref{eq:FOC1} of Theorem~\ref{Theorem: Expected long-run average payoff}.
\begin{enumerate}[\rm (i)]
\item Every solution $(a,y)$ of \eqref{eq:FOC1} with $\mathfrak{l} < a < y <\mathfrak{r}$ satisfies $a < \hat{z}_u$ and $y > \hat{z}_d$, where $\hat{z}_u$ and $\hat{z}_d$ are defined in Lemma~\ref{Lemma: FractionProperties};

\item There exists a unique pair $(a^*,y^*) \equiv (a^*(\theta),y^*(\theta))$ with $\mathfrak{l}<a^*(\theta)<y^*(\theta)<\mathfrak{r}$ solving \eqref{eq:FOC1}.
\end{enumerate}
\end{theorem}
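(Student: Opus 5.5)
The plan is to first rewrite the common value on the left-hand side of \eqref{eq:FOC1} in a form that is easy to differentiate, and then prove (i) by contradiction and (ii) by a one-parameter shooting argument. Set
\[
K(a,y):=\frac{1}{m((a,y))}\Big(\frac{q_d}{S'(y)}-\frac{q_u}{S'(a)}+\int_a^y\pi(z,\theta)\,m(dz)\Big).
\]
Applying \eqref{Eq:DriftToScale} to $-q_u/S'(a)$, and separately to $q_d/S'(y)$, gives two representations:
\[
K(a,y)=\frac{\int_a^y\pi_u(z,\theta)\,m(dz)+\frac{q_d-q_u}{S'(y)}}{m((a,y))}
=\frac{\int_a^y\pi_d(z,\theta)\,m(dz)+\frac{q_d-q_u}{S'(a)}}{m((a,y))}.
\]
Since $q_d<q_u$ and $S'>0$, this shows that $K(a,y)$ is strictly smaller than the $m$-average of $\pi_u(\cdot,\theta)$ over $(a,y)$, and also strictly smaller than the $m$-average of $\pi_d(\cdot,\theta)$. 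Differentiating gives
\[
\partial_aK=\frac{m'(a)}{m((a,y))}\big(K-\pi_u(a,\theta)\big),\qquad
\partial_yK=\frac{m'(y)}{m((a,y))}\big(\pi_d(y,\theta)-K\big).
\]
From the proof of Lemma~\ref{Lemma: FractionProperties}, via \eqref{eq:Rprime} and its $\Psi$-analogue $H_u'=\frac{\psi_\lambda m'}{\Psi_\lambda 1}(\pi_u-H_u)$, I also record two sign facts. First, $H_u<\pi_u$ on $(\mathfrak l,\hat z_u)$ and $H_u\ge\pi_u$ on $[\hat z_u,\mathfrak r)$. Second, $H_d\ge\pi_d$ on $(\mathfrak l,\hat z_d]$ and $H_d<\pi_d$ on $(\hat z_d,\hat x_d)$.

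For (i), suppose a solution has $a\ge\hat z_u$. Then $K=H_u(a)\ge\pi_u(a,\theta)$. On the other hand $a\ge\hat z_u>\hat x_u$, so $\pi_u(\cdot,\theta)$ is strictly decreasing on $(a,y)$. Hence
\[
K<\frac{1}{m((a,y))}\int_a^y\pi_u\,dm<\pi_u(a,\theta),
\]
which is a contradiction. Symmetrically, if $y\le\hat z_d<\hat x_d$, then $\pi_d(\cdot,\theta)$ is increasing on $(a,y)$, so $K<\text{avg}\,\pi_d<\pi_d(y,\theta)\le H_d(y)=K$, again a contradiction.

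For (ii), I will parametrise candidate solutions by the common level $c=H_u(a)=H_d(y)$. By Lemma~\ref{Lemma: FractionProperties}, $H_u$ is a strictly increasing bijection from $(\mathfrak l,\hat z_u)$ onto $(\pi_u(\mathfrak l+,\theta),H_u(\hat z_u))$, and $H_d$ is a strictly decreasing bijection from $(\hat z_d,\mathfrak r)$ onto $(-\infty,H_d(\hat z_d))$. By (i), every solution lies in these branches. So for $c$ in the intersection $\mathcal C$ of the two ranges, further restricted to $\{c: a(c)<y(c)\}$, I define $a(c):=H_u^{-1}(c)$, which is increasing in $c$, and $y(c):=H_d^{-1}(c)$, which is decreasing in $c$. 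Solutions of \eqref{eq:FOC1} are then exactly the zeros of
\[
G(c):=K(a(c),y(c))-c .
\]
The restricted set is an interval, because $a(c)-y(c)$ is increasing in $c$.

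Uniqueness follows from the sign of $G'$ at a zero. There $K=c=H_u(a)<\pi_u(a,\theta)$ because $a<\hat z_u$, so $\partial_aK<0$ while $a'>0$. Likewise $K=H_d(y)$; here I need $H_d(y)<\pi_d(y,\theta)$, which holds for $y\in(\hat z_d,\hat x_d)$ by Lemma~\ref{Lemma: FractionProperties}, and also for $y\ge\hat x_d$ by \eqref{Eq:Hdpi2}. Hence $\partial_yK>0$ while $y'<0$. Together with the $-1$ from $-c$, this gives $G'<0$ at every zero, so $G$ crosses zero at most once.

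For existence I check the signs of $G$ at the two ends of $\mathcal C$. At the upper end, either $a(c)\uparrow y(c)$, or $a\to\hat z_u$, or $y\to\hat z_d$. In the first case the numerator of $K$ tends to $(q_d-q_u)/S'(a)<0$ while $m((a,y))\to0$, so $K\to-\infty$ and $G<0$. In the other two cases the inequalities used in (i) give $K<c$ in the limit, so again $G<0$. At the lower end, $a\to\mathfrak l$ or $y\to\mathfrak r$, and I must show $G>0$, i.e.\ $K(a(c),y(c))>c$. This is the main obstacle, because $c$ may tend to $-\infty$ as $y\to\mathfrak r$. I would first try the representation of $K$ as an $m$-average of $\pi_d$ together with $\pi_d(y,\theta)\to-\infty$. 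The idea is to show that $H_d(y)$, which is an average of $\pi_d$ over the tail $(y,\mathfrak r)$ weighted by $\varphi_\lambda m$, falls faster than the average of $\pi_d$ over the bulk $(a,y)$. If this is inconclusive, I would use the divergence $(\Phi_\lambda 1)(x)(H_d-\pi_d)\to\infty$ from \eqref{Philinfty} and its $\Psi$-analogue in the case $a\to\mathfrak l$. Continuity of $G$ on $\mathcal C$, combined with these end signs and the intermediate value theorem, then yields exactly one zero $c^*$, and $(a^*,y^*)=(a(c^*),y(c^*))$ is the required pair.
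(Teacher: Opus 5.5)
Your part (i), the parametrisation by the common level $c$ (equivalent to the paper's map $a\mapsto y_a$), the negative sign of $G$ at the upper end, and the uniqueness argument are all sound. For uniqueness you use $G'<0$ at every zero, whereas the paper shows that its $g$ is globally decreasing on $\mathbf{A}$.

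The gap is existence at the lower end, which you both mislocate and leave unproved. $H_u$ maps $(\mathfrak l,\hat z_u)$ onto $(\pi_u(\mathfrak l+,\theta),H_u(\hat z_u))$, and $\pi_u(\mathfrak l+,\theta)$ is finite. Hence the level $c$ never tends to $-\infty$ and $y(c)$ never approaches $\mathfrak r$. The lower end of $\mathcal C$ is always $c\downarrow\pi_u(\mathfrak l+,\theta)$, with $a(c)\downarrow\mathfrak l$ and $y(c)\to\bar y\in\mathcal I$, where $H_d(\bar y)=\pi_u(\mathfrak l+,\theta)$. What you must show there is that the numerator of $G$ is positive:
\[
\int_{a}^{y(c)}\big(\pi_u(z,\theta)-H_u(a)\big)\,m(dz)-\frac{q_u-q_d}{S'(y(c))}>0 \quad\text{for } a=a(c) \text{ near } \mathfrak l .
\]
Everything except the part of the integral near $\mathfrak l$ stays bounded. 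Near $\mathfrak l$ the integrand is close to $\pi_u(z,\theta)-\pi_u(\mathfrak l+,\theta)$, which is small, so the sign is decided by how much mass $m$ carries near $\mathfrak l$. If $m$ had finite mass there, $G$ would have a finite limit whose sign depends on $q_u-q_d$. So no argument built only from limits of $\pi_u$, $\pi_d$, $H_u$, $H_d$ can close this step; it must use the behaviour of $m$ at $\mathfrak l$.

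The missing ingredient, which the paper supplies, is a divergence of the speed measure at $\mathfrak l$. You have $H_u(a)<\pi_u(a,\theta)$, and $\partial_x\pi_u\ge\eta>0$ on $(\mathfrak l,x_0)$; the latter is the $\liminf$ condition in Assumption~\ref{Ass:Payoff}.(iii), which your proposal never uses. Together these bound the near-$\mathfrak l$ part below by $\eta\int_a^{x_0}(z-a)\,m(dz)$, which tends to $\eta\int_{\mathfrak l}^{x_0}(z-\mathfrak l)\,m(dz)=\infty$. This divergence comes from Feller's test for the natural boundary, $\int_{\mathfrak l}^{x_0}(S(z)-S(\mathfrak l+))\,m(dz)=\infty$. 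It is combined with $S(\mathfrak l+)>-\infty$ and $S(z)-S(\mathfrak l+)\le S'(x_0)(z-\mathfrak l)$, both of which follow from $b<0$ (so $S'$ is increasing) and the finiteness of $\mathfrak l$.

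Your fallback tools do not supply this. \eqref{Philinfty} concerns $H_d$ near $\mathfrak l$, whereas $y(c)$ stays near the interior point $\bar y$. The $\Psi$-analogue at $\mathfrak l$ goes the wrong way, since
\[
(\Psi_\lambda 1)(a)\big(\pi_u(a,\theta)-H_u(a)\big)=\int_{\mathfrak l}^a(\Psi_\lambda 1)(z)\,\partial_z\pi_u(z,\theta)\,dz\to 0 \quad\text{as } a\downarrow\mathfrak l .
\]
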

\begin{proof}
We prove the result in the following steps. 

\smallskip
{\it Proof of part }(i).
Using \eqref{Eq:DriftToScale}, the system of equations \eqref{eq:FOC1} can be expressed as 
% \begin{equation} \label{Eq:algebraic-foc}
% \begin{aligned}
% & \frac{1}{m((a^*(\theta), y^*(\theta)))} \bigg[\int^{y^*(\theta)}_{a^*(\theta)} \pi_d(z,\theta)m(dz)   +\frac{q_d-q_u}{S'(a^*(\theta))}\bigg] = \frac{(\Phi_\lambda  \pi_d(\cdot,\theta))(y^*(\theta))}{(\Phi_\lambda  1)(y^*(\theta))}, \\
% & \frac{1}{m((a^*(\theta), y^*(\theta)))} \bigg[\int^{y^*(\theta)}_{a^*(\theta)} \pi_u(z,\theta)m(dz)  + \frac{q_d-q_u}{S'(y^*(\theta))}\bigg] = \frac{(\Psi_\lambda  \pi_u(\cdot,\theta))(a^*(\theta))}{(\Psi_\lambda  1)(a^*(\theta))}.
% \end{aligned}
% \end{equation}
\begin{align} \label{Eq: OptRewritten1}
\begin{split}
\int_a^y \big( \pi_u(z, \theta)-H_u(a) \big) m(dz) - \frac{q_u - q_d}{S'(y)} &= 0 %, \\
= \int_a^y \big( \pi_d(z, \theta)-H_d(y) \big) m(dz) - \frac{q_u - q_d}{S'(a)} %&= 0. 
\end{split}
\end{align}
Assume (aiming for a contradiction) that $a \geq \hat{z}_u$. 
By Lemma~\ref{Lemma: FractionProperties}.(ii), $\hat{z}_u > \hat{x}_u$ and $H_u'(a) \leq 0$, implying that  
\begin{equation} \label{piuHu}
H_u'(a) = \frac{\psi_\lambda(a) \,m'(a)}{(\Psi_\lambda1)(a)} \big(\pi_u(a,\theta)-H_u(a) \big) \leq 0 
\quad \Rightarrow \quad 
\pi_u(a,\theta) \leq H_u(a), 
\quad a \geq \hat{z}_u .
\end{equation}
Then, Assumption~\ref{Ass:Payoff}.(iii) gives
\begin{equation*}
\pi_u(z,\theta)\leq\pi_u(a,\theta)\leq H_u(a), \quad z \in [a,y], \quad a \geq \hat{z}_u > \hat{x}_u.
\end{equation*}
This yields that the integral in the left-hand side of \eqref{Eq: OptRewritten1} is nonpositive, and since $q_u > q_d$ and $S'(y) > 0$, the whole left-hand side is strictly negative, which is a contradiction. Therefore $a < \hat{z}_u$ and using similar arguments we also have $y > \hat{z}_d$.

\smallskip
{\it Proof of part }(ii). We prove this in the following two steps. 

\smallskip
{\it Step 1: Reduce the system \eqref{eq:FOC1} to a one-dimensional equation.}
Define the non-empty set
\begin{equation*}
\mathbf{A} := \left\{ a\in(\mathfrak{l},\hat z_u] : H_u(a)\leq H_d(a\vee\hat z_d) \right\},
\end{equation*}
where non-emptiness results from Lemma~\ref{Lemma: FractionProperties}.(i),(iii) and Assumption~\ref{Ass:Payoff}.(iii) which imply 
\begin{equation} \label{Eq:non_empty_A}
H_u(\mathfrak l+) 
= \pi_u(\mathfrak l+,\theta)
\leq \pi_d(\mathfrak l+,\theta)
= H_d(\mathfrak l+)
< H_d(\hat z_d).
\end{equation} 
Thanks to $H_d$ being strictly decreasing on $[\hat z_d,\mathfrak r)$ from Lemma~\ref{Lemma: FractionProperties}.(i), we get that  
\begin{equation} \label{ya}
\forall \; a\in\mathbf{A}, \quad \exists \; ! \; y_a \in [a\vee\hat z_d,\mathfrak r) \quad \text{such that} \quad H_d(y_a)=H_u(a),
\end{equation}

Combining the facts that $H_u$ is strictly increasing on $(\mathfrak l,\hat z_u)$ and $a\mapsto H_d(a\vee\hat z_d)$ is non-increasing on $(\mathfrak l,\hat z_u]$ thanks to Lemma~\ref{Lemma: FractionProperties}.(i),(ii), we get that $a \mapsto H_u(a)-H_d(a\vee\hat z_d)$ is continuous and strictly increasing on $(\mathfrak l,\hat z_u]$.
This yields together with \eqref{Eq:non_empty_A} that the set $\mathbf{A}$ takes the form 
\begin{equation} \label{newA}
\mathbf{A}=(\mathfrak l,a_{\max}]
\quad \text{for some $a_{\max}\in(\mathfrak l,\hat z_u]$}, 
\end{equation}
which is given by 
\[
a_{\max} = \begin{cases}
\hat z_u, & H_u(\hat z_u)\leq H_d(\hat z_u\vee\hat z_d),\\
\hat a, & H_u(\hat z_u)>H_d(\hat z_u\vee\hat z_d),
\end{cases}
\quad \text{where } \hat a\in(\mathfrak l,\hat z_u) \text{ uniquely solves }
H_u(\hat a)=H_d(\hat a\vee\hat z_d).
\]
Consequently, if $a_{\max}=\hat a\geq\hat z_d$, then
$y_{a_{\max}}=a_{\max}$, while if
$a_{\max}=\hat a<\hat z_d$, then
$y_{a_{\max}}=\hat z_d>a_{\max}$. Finally, if
$a_{\max}=\hat z_u$, then $H_u(\hat z_u)\leq H_d(\hat z_u\vee\hat z_d)$, and, since $H_d$ is strictly decreasing on $[\hat z_d,\mathfrak r)$, we get $y_{a_{\max}} =y_{\hat  z_u} \geq \hat z_u\vee\hat z_d \geq \hat z_u=a_{\max}$. 
Thus, in all cases, $y_{a_{\max}}\geq a_{\max}$.
Moreover, in view of \eqref{newA} and Lemma~\ref{Lemma: FractionProperties}.(i),(ii), we know that $H_u$ is strictly increasing on $\mathbf{A}$ and $H_d$ is strictly decreasing on $[\hat z_d,\mathfrak r)$, implying that 
\begin{equation} \label{yadecr} a\mapsto y_a = (H_d |_{[\hat z_d,\mathfrak r)})^{-1} (H_u(a)) \quad \text{is continuous and strictly decreasing on $\mathbf{A}$}.
\end{equation}
This implies that $a\mapsto a-y_a$ is strictly increasing, and given the already established $a_{\max} - y_{a_{\max}} \leq 0$, we conclude that 
\begin{equation} \label{a<ya}
a<y_a, \quad \forall \; a\in(\mathfrak l,a_{\max}).
\end{equation}

We now define in view of Step 1 and the first equation of \eqref{eq:FOC1} the function
\begin{equation} \label{eq:g}
g(a) := \int_a^{y_a}\pi(z,\theta) \, m(dz) + \frac{q_d}{S'(y_a)} - \frac{q_u}{S'(a)} - m((a,y_a)) \, H_d(y_a), 
\quad a\in\mathbf{A}, 
\end{equation}
and notice that 
\begin{equation}
\text{$(a,y)$ with $\mathfrak l<a<y<\mathfrak r$ satisfies \eqref{eq:FOC1}} 
\quad \Leftrightarrow \quad 
a\in\mathbf{A},
\quad y=y_a,
\quad g(a)=0.
\end{equation}

\smallskip
{\it Step 2: Prove that $g$ from \eqref{eq:g} has exactly one zero $a^*(\theta) \in \mathbf{A}$ and $a^*(\theta) < y_{a^*(\theta)}$.}
Applying equation \eqref{Eq:DriftToScale} and noting $H_d(y_a)=H_u(a)$, the function $g$ admits the equivalent representations
\begin{align} \label{Eq:gI1}
g(a) &:= \int_a^{y_a} \big(\pi_u(z, \theta) - H_u(a)\big) \, m(dz)-\frac{q_u - q_d}{S'(y_a)}, \\ \label{Eq:gI2}
& \, \, \, = \int_a^{y_a}\big(\pi_d(z, \theta)-  H_d(y_a)\big) \, m(dz) - \frac{q_u - q_d}{S'(a)}.
\end{align}
We prove the desired result in the following four steps. 

\smallskip
{\it Step 2.(a): $g(a_{\max}) < 0$.}
If $a_{\max}=\hat{z}_u$, then \eqref{piuHu} and Lemma~\ref{Lemma: FractionProperties}.(ii) gives $\pi_u(z,\theta) \leq H_u(z) \leq H_u(\hat{z}_u)$ for all $z \in (\hat{z}_u, y_{\hat{z}_u})$, so that \eqref{Eq:gI1} yields
$g(\hat{z}_u) \leq - (q_u-q_d) / S'(y_{\hat{z}_u}) < 0$. 
Otherwise, if $a_{\max} = \hat{a}$ and $y_{\hat{a}} = \hat{a} \vee \hat{z}_d$, then the interval $(\hat{a}, y_{\hat{a}})$ is either empty or equal to $(\hat{a}, \hat{z}_d)$. 
In the latter case, Lemma~\ref{Lemma: FractionProperties}.(i) now gives $\pi_d(z,\theta) \leq H_d(z) \leq H_d(\hat{z}_d) = H_d(y_{\hat{a}})$ for all $z \in (\hat{a}, \hat{z}_d)$, so that the equation \eqref{Eq:gI2} yields
\begin{equation*}
g(\hat{a}) \leq -\frac{q_u-q_d}{S'(\hat{a})} < 0.
\end{equation*}
 
\smallskip
{\it Step 2.(b): $g$ is decreasing on $\mathbf{A}$.}
Let $a_1 < a_2$ be two points in $\mathbf{A}$, and set $y_i := y_{a_i}$ and $K_i:=H_u(a_i)=H_d(y_i)$, for $i=1,2$. Since $y_{a_2} \geq a_2$ and $a \mapsto y_a$ is decreasing, we have $a_1 < a_2 \leq y_2 < y_1$. We split the integral in \eqref{Eq:gI1} of $g(a_1)$ over $(a_1,a_2)$, $(a_2,y_2)$ and $(y_2,y_1)$, and use
\begin{equation*}
\int_{y_2}^{y_1}\pi_u(z,\theta)\,m(dz) - \frac{q_u-q_d}{S'(y_1)} = \int_{y_2}^{y_1}\pi_d(z,\theta)\,m(dz) - \frac{q_u-q_d}{S'(y_2)},
\end{equation*}
which follows form \eqref{Eq:DriftToScale} on $(y_2, y_1)$, to obtain
\begin{equation} \label{Eq: g_diff}
\begin{aligned}
g(a_2) - g(a_1) = & - (K_2 - K_1) m((a_2,y_2)) \\
& - \int_{a_1}^{a_2} \big(\pi_u(z, \theta)- K_1 \big) m(dz) - \int_{y_2}^{y_1} \big(\pi_d(z, \theta) - K_1 \big) m(dz).
\end{aligned}
\end{equation}
Since $K_2 > K_1$ and $m((a_2,y_2)) \geq 0$ the first term on the right-hand side of \eqref{Eq: g_diff} is non-positive, 
the second term is strictly negative since $\pi_u(z, \theta) \geq H_u(z) > H_u(a_1) = K_1$ for $z \in (a_1,a_2) \subset (\mathfrak{l}, \hat{z}_u)$ (by Lemma~\ref{Lemma: FractionProperties}.(ii)), 
and the third term is non-positive  since $\pi_d(z, \theta) \geq H_d(z) \geq H_d(y_1) = K_1$ for $z\in(y_2,y_1)\subset(\hat{z}_d,\mathfrak{r})$ (by Lemma~\ref{Lemma: FractionProperties}.(i)). 
Hence $g(a_2) < g(a_1)$, which implies that $g$ is decreasing.
 
\smallskip
{\it Step 2.(c): $g(a) \to \infty$ as $a \downarrow \mathfrak{l}$.}
Define $\bar y\in(\hat z_d,\mathfrak r)$ by $H_d(\bar y)=\pi_u(\mathfrak l+,\theta)$, which exists uniquely by \eqref{Eq:non_empty_A}, Lemma~\ref{Lemma: FractionProperties}.(iii)--(iv), and the strict decrease of $H_d$ on $(\hat z_d,\mathfrak r)$. 
To see this, note that $H_u(a) \to \pi_u(\mathfrak l+,\theta)$ as $a \downarrow \mathfrak{l}$ and thanks to $H_d(y_a) = H_u(a)$ we have $y_a \to \bar y$ as $a \downarrow \mathfrak{l}$. %Define $\bar{y}$ by $H_d(\bar{y}) = \pi_u(\mathfrak{l}+, \theta)$.

By Assumption~\ref{Ass:Payoff}.(iii), there exist $\eta > 0$ and $x_0 \in (\mathfrak{l}, \min \{\hat{x}_u, \bar{y}\})$ such that $\partial_x \pi_u(x, \theta) \geq \eta$ for all $x \in (\mathfrak{l}, x_0)$. Since $y_a \to \bar{y} > x_0$ as $a \downarrow \mathfrak{l}$, we have $a < x_0 < y_a$ for all $a$ sufficiently near to $\mathfrak{l}$. For such $a$ we have
\begin{equation} \label{ga}
\begin{aligned}
g(a) =& \int_a^{x_0} \big(\pi_u(z, \theta) - H_u(a)\big) m(dz) + \int_{x_0}^{y_a} \big(\pi_u(z,\theta) - H_u(a)\big) m(dz) - \frac{q_u - q_d}{S'(y_a)}.
\end{aligned}
\end{equation}
The last two terms remain bounded as $a \downarrow \mathfrak{l}$ thanks to $y_a \to \bar{y} \in \mathcal{I}$ and $H_u(a) \to \pi_u(\mathfrak{l}+, \theta)$. 
Moreover, since $\pi_u(\cdot,\theta)$ is increasing on $a < x_0 < \hat{x}_u$, we get $H_u(a) \leq \pi_u(a,\theta)$.
The mean value theorem with $\partial_x \pi_u(x, \theta) \geq \eta$ thus yields
\begin{equation*}
\begin{aligned}
\int_a^{x_0} \big( \pi_u(z, \theta) - H_u(a) \big) m(dz) 
&\geq \int_a^{x_0} \big(\pi_u(z, \theta) - \pi_u(a, \theta) \big) m(dz) 
\geq \eta \int_a^{x_0} (z-a) \, m(dz).
\end{aligned}
\end{equation*}
To see that the last integral diverges, recall from the proof of Lemma~\ref{Lemma: FractionProperties} that $b < 0$ on $\mathcal{I}$, so that $(1/S')'=b\,m'<0$ and $S'$ is increasing. In particular, $S'(x) \leq S'(x_0)$ for all $x \in (\mathfrak{l}, x_0]$, and given that $\mathfrak{l} \in \mathcal{I}$, we get 
$\int_{\mathfrak l}^{x_0}S'(x) dx
\leq S'(x_0) (x_0-\mathfrak l) < \infty$. 
This implies that $S(\mathfrak l+) = S(x_0) - \int_{\mathfrak l}^{x_0} S'(z) dz > -\infty$ %[I do not see this:] and since $\mathfrak{l} \geq 0$ is finite, we have $S(\mathfrak{l}+) > -\infty$
and $S(z) - S(\mathfrak{l}+) \leq \int_{\mathfrak{l}}^z S'(u) du \leq S'(x_0)(z - \mathfrak{l})$ for $z \in (\mathfrak{l}, x_0]$. 
As $\mathfrak{l}$ is natural, and hence not attainable, Feller's test (see, e.g., \citep[pp.~229--230]{KarlinTaylor1981}) gives $\int_{\mathfrak{l}}^{x_0} \big( S(z) - S(\mathfrak{l}+) \big) m(dz) = \infty$, and therefore by above $\int_{\mathfrak{l}}^{x_0} (z - \mathfrak{l}) m(dz) = \infty$. Then monotone convergence yields
$\lim_{a \downarrow \mathfrak{l}} \int_a^{x_0}(z-a) m(dz)=\int_{\mathfrak{l}}^{x_0}(z-\mathfrak{l}) m(dz) = \infty$, 
and consequently, we conclude from \eqref{ga} that $\lim_{a \downarrow \mathfrak{l}}g(a) = \infty$.
 
\smallskip
{\it Step 2.(d): Conclusion.}
By steps 2.(a)--(c), \eqref{newA} and the intermediate value theorem, we conclude that $g$ has exactly one root $a^*(\theta) \in \mathbf{A}$ satisfying $a^*(\theta) < a_{\max}$, so that for $y^*(\theta) := y_{a^*(\theta)} > a^*(\theta)$ from \eqref{ya} and \eqref{a<ya}, $(a^*(\theta), y^*(\theta))$ is the unique solution of \eqref{eq:FOC1} with~$\mathfrak{l}<a<y<\mathfrak{r}$.
\end{proof}

Combining the results in Theorems~\ref{Theorem: Expected long-run average payoff} and \ref{thm:barriers}, we conclude that there exists a unique pair $(a^*,y^*) \equiv (a^*(\theta),y^*(\theta))$ characterising the optimal Poissonian singular control of threshold type which maximises \eqref{Eq:PayoffAandY} and achieves the value $\kappa^*(\theta)$ in \eqref{Eq:PayoffForAandY}.

\subsection{Optimality over the general class of admissible controls} \label{sec:optimality}

In order the upgrade the optimality of the Poissonian singular control at thresholds $(a^*(\theta),y^*(\theta))$ over the general class of controls defined by $\mathcal{A}$ in \eqref{admissible controls}, we first connect $(a^*(\theta),y^*(\theta))$ and $\kappa^*(\theta)$ to an associated free-boundary problem. 
To that end, we define for any given and fixed mean-field parameter $\theta \in \mathbb{R}_{+}$, the function 
\begin{equation} \label{tilpi}
\widetilde{\pi}_{q}(x, \theta):= \pi(x, \theta) + q \lambda x,
\quad q \in \{q_u, q_d\}, 
\quad x \in \mathcal{I},  
\end{equation}
and let the potential function $w: \mathcal{I} \to \mathbb{R}$ be determined, up to an additive constant, by
\begin{equation} \label{Eq:wprime}
w'(x) =
\begin{cases}
(R_\lambda \widetilde{\pi}_{q_u}(\cdot, \theta))'(x) + A_u(\theta) \psi_\lambda'(x), & x \leq a^*(\theta),\\
S'(x) \big(\frac{q_u}{S'(a^*(\theta))} + \int_{a^*(\theta)}^{x} (\kappa^*(\theta) - \pi(z, \theta)) m(dz) \big), & a^*(\theta) < x < y^*(\theta),\\
(R_\lambda \widetilde{\pi}_{q_d} (\cdot,\theta))'(x) + A_d(\theta) \varphi_\lambda'(x), & x \geq y^*(\theta),
\end{cases}
\end{equation}
where
\begin{equation*}
A_u(\theta) := \frac{q_u - (R_\lambda \widetilde{\pi}_{q_u}(\cdot, \theta))'(a^*(\theta))}{\psi_\lambda'(a^*(\theta))} 
\quad \text{and} \quad
A_d(\theta) := \frac{q_d-(R_\lambda \widetilde{\pi}_{q_d}(\cdot, \theta))'(y^*(\theta))}{\varphi_\lambda'(y^*(\theta))}.
\end{equation*}

\begin{lemma} \label{Lemma: FBP}
Recall the thresholds $(a^*(\theta),y^*(\theta))$ from Theorem~\ref{thm:barriers} and their associated value $\kappa^*(\theta)$ from Theorem~\ref{Theorem: Expected long-run average payoff}.
Then, the function $w$ defined via \eqref{Eq:wprime} satisfies
$w \in C^2(\mathcal{I})$ with $w'(a^*(\theta)) = q_u$ and $w'(y^*(\theta)) = q_d$, 
and %the quadruple $(w(x),\kappa^*(\theta),a^*(\theta),y^*(\theta))$ 
solves the free-boundary problem
\begin{equation} \label{eq:FBP}
\mathcal{G} w(x) + \pi(x, \theta) - \kappa^*(\theta) =
\begin{cases}
\lambda\big(w(x) - w(y^*(\theta)) - q_d (x-y^*(\theta))\big), & x \geq y^*(\theta),\\
0, & a^*(\theta) < x < y^*(\theta), \\
\lambda\big(w(x) - w(a^*(\theta)) - q_u (x-a^*(\theta))\big), & x \leq a^*(\theta).
\end{cases}
\end{equation}
\end{lemma}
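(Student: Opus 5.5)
The plan is to check the three regions of \eqref{Eq:wprime} separately and then glue them. The gluing uses the first-order conditions \eqref{eq:FOC1} and the formula for $\kappa^*(\theta)$ from Theorem~\ref{Theorem: Expected long-run average payoff}. Throughout, write $a=a^*(\theta)$, $y=y^*(\theta)$, $\kappa=\kappa^*(\theta)$.

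\emph{Step 1: $C^1$-fit and the boundary values of $w'$.} At $a$, the first branch gives $w'(a-)=q_u$ by the choice of $A_u(\theta)$, and the middle branch gives $w'(a+)=S'(a)\,q_u/S'(a)=q_u$. At $y$, the third branch gives $w'(y+)=q_d$ by the choice of $A_d(\theta)$. The middle branch gives
$$w'(y-)=S'(y)\Big(\frac{q_u}{S'(a)}+\kappa\, m((a,y))-\int_a^y\pi(z,\theta)\,m(dz)\Big).$$
By the expression for $\kappa^*(\theta)$ in Theorem~\ref{Theorem: Expected long-run average payoff}, the bracket equals $q_d/S'(y)$, so $w'(y-)=q_d$. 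Hence $w\in C^1(\mathcal I)$ with the stated boundary values.

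\emph{Step 2: the equation in each region.} On $(a,y)$ we have $\frac{d}{dx}\big(w'/S'\big)=(\kappa-\pi)m'$, so $\mathcal G w=\frac{d}{dm}\frac{d}{dS}w=\kappa-\pi$, which is the middle line of \eqref{eq:FBP}.

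On $(\mathfrak l,a]$, integrate \eqref{Eq:wprime} to write $w=R_\lambda\widetilde\pi_{q_u}+A_u\psi_\lambda+c_u$ for some constant $c_u$. Since $(\mathcal G-\lambda)R_\lambda f=-f$ and $(\mathcal G-\lambda)\psi_\lambda=0$, we get $\mathcal Gw+\pi=\lambda w-\lambda q_u x-\lambda c_u$. Substituting, for $x\le a$,
$$\mathcal Gw+\pi-\kappa-\lambda\big(w(x)-w(a)-q_u(x-a)\big)=K_u:=\lambda\big(w(a)-q_u a-c_u\big)-\kappa,$$
which is a constant. The case $x\ge y$ is analogous, with a constant $K_d$. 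So the lemma reduces to showing $K_u=K_d=0$.

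\emph{Step 3 (main step): $K_u=0$ via the first-order condition.} Set $h(x):=w(x)-q_u x$ on $(\mathfrak l,a]$. Then $h'(a)=0$ and
$$(\mathcal G-\lambda)h=-\pi-q_u b-\lambda c_u=-\pi_u(\cdot,\theta)-\lambda c_u.$$
Apply the Green/Lagrange identity
$$\frac{h'(x)\psi_\lambda(x)-\psi_\lambda'(x)h(x)}{S'(x)}=\int_{\mathfrak l}^x\psi_\lambda\,(\mathcal G-\lambda)h\,dm$$
at $x=a$, and use $\psi_\lambda'/S'=\lambda\Psi_\lambda 1$. This yields
$$\lambda\big(w(a)-q_u a-c_u\big)=\frac{(\Psi_\lambda\pi_u(\cdot,\theta))(a)}{(\Psi_\lambda 1)(a)}=H_u(a).$$
Hence $K_u=H_u(a)-\kappa$. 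This vanishes because \eqref{eq:FOC1}, combined with the formula for $\kappa^*(\theta)$, gives $\kappa=H_u(a)$. Symmetrically, using $\varphi_\lambda$, $\Phi_\lambda$ and $H_d(y)=\kappa$, we get $K_d=0$.

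The obstacle in this step is justifying the Green identity on $(\mathfrak l,a]$, i.e.\ that the boundary term at $\mathfrak l$ vanishes. I would check this termwise. For $\psi_\lambda$ itself the boundary contribution is the Wronskian of $\psi_\lambda$ with itself, which is zero. For $R_\lambda\widetilde\pi_{q_u}$, I would use the representation \eqref{Eq:Resolvent1}, the limits $\psi_\lambda(\mathfrak l+)=\psi_\lambda'(\mathfrak l+)/S'(\mathfrak l+)=0$, the bound $\widetilde\pi_{q_u}\in\mathcal L^1(\mathcal I)$ from the linear growth assumptions, and the identity \eqref{Eq:zaInt} to control the linear term $q_u x$.

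\emph{Step 4: $C^2$.} With $K_u=0$, evaluating the outer equation at $x=a-$ gives $\mathcal Gw(a-)+\pi(a)-\kappa=0$, while the middle region gives $\mathcal Gw(a+)+\pi(a)-\kappa=0$. Since $w'$ is continuous at $a$ and $\sigma>0$, this forces $w''(a-)=w''(a+)$; the same argument works at $y$. Inside each region $w$ is $C^2$: the middle branch by its integral form, and the outer branches because $R_\lambda\widetilde\pi$, $\psi_\lambda$ and $\varphi_\lambda$ solve ODEs with $C^2$ coefficients. Hence $w\in C^2(\mathcal I)$ and \eqref{eq:FBP} holds.
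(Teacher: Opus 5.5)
Your proposal is correct and follows essentially the same route as the paper. You verify the equation region by region, reduce each outer region to a constant, show that constant equals $H_u(a^*(\theta))-\kappa^*(\theta)$ (resp.\ $H_d(y^*(\theta))-\kappa^*(\theta)$) so that it vanishes by \eqref{eq:FOC1}, and finish with the $C^1$/$C^2$ matching; your Green/Lagrange identity for $w-q_u x$ is only a repackaging of the paper's Wronskian identity for $R_\lambda\widetilde{\pi}_q$, the definitions of $A_u(\theta)$ and $A_d(\theta)$, and \eqref{Eq:zaInt}/\eqref{Eq:zyInt}.
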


\begin{proof}
% Recall from Theorems~\ref{Theorem: Expected long-run average payoff} and~\ref{thm:barriers} that
% \begin{equation} \label{Eq:kappa-star}
% \begin{aligned}
% \kappa^*(\theta)
%     &= \frac{1}{m((a^*(\theta),y^*(\theta)))} \bigg[\frac{q_d}{S'(y^*(\theta))} - \frac{q_u}{S'(a^*(\theta))} + \int_{a^*(\theta)}^{y^*(\theta)}\pi(z,\theta)\,m(dz) \bigg] \\
%     &= \frac{(\Psi_\lambda \pi_u(\cdot, \theta))(a^*(\theta))}{(\Psi_\lambda 1)(a^*(\theta))}
%     = \frac{(\Phi_\lambda \pi_d(\cdot, \theta))(y^*(\theta))}{(\Phi_\lambda 1)(y^*(\theta))}.
% \end{aligned}
% \end{equation}
We prove \eqref{eq:FBP} in each region separately and then prove the regularity.

\smallskip
{\it Region $a^*(\theta)<x<y^*(\theta)$.}
By \eqref{Eq:wprime} and $m(dz)=m'(z)\,dz$, it follows that
\[
\frac{d}{dx}\frac{w'(x)}{S'(x)}
=
\big(\kappa^*(\theta)-\pi(x,\theta)\big)m'(x).
\]
Therefore, combining this with \eqref{Eq:GeneratorCanonical}, we get that 
\[
\mathcal{G}w(x) = \frac{1}{m'(x)} \frac{d}{dx}\frac{w'(x)}{S'(x)} %= \kappa^*(\theta)-\pi(x,\theta)
\quad \Rightarrow \quad 
\mathcal{G}w(x)+\pi(x,\theta)-\kappa^*(\theta)=0,
\quad a^*(\theta)<x<y^*(\theta).
\]
Moreover, we have by definition \eqref{Eq:wprime} that $w'(a^*(\theta)+) = q_u$, whereas using the expression of $\kappa^*(\theta)$ from Theorem~\ref{Theorem: Expected long-run average payoff} gives
\[
w'(y^*(\theta)-) 
= S'(y^*(\theta)) \Big( \frac{q_u}{S'(a^*(\theta))} + \int_{a^*(\theta)}^{y^*(\theta)} \big(\kappa^*(\theta)-\pi(z,\theta)\big)m(dz) \Big)
= q_d.
\]
Thus, the derivative of $w$ from the interior region matches the prescribed slopes $q_u$ and $q_d$ at the two free boundaries.

% \smallskip
% {\it Region $a^*(\theta)<x<y^*(\theta)$.}
% By \eqref{Eq:wprime} we have
% \begin{equation*}
% \frac{d}{dx}\frac{w'(x)}{S'(x)} = \big(\kappa^*(\theta)-\pi(x,\theta)\big) m'(x),
% \end{equation*}
% so that $\mathcal{G}w(x) + \pi(x,\theta) - \kappa^*(\theta) = 0$ by \eqref{Eq:GeneratorCanonical}.
% Moreover, $w'(a^*(\theta)+) = q_u$, and the first equality in \eqref{Eq:kappa-star} gives $w'(y^*(\theta)-) = q_d$.

\smallskip
{\it Region $x > y^*(\theta)$.}
Integrating the expression for $w'$ in \eqref{Eq:wprime} from
$y^*(\theta)$ to $x$ and using
$(\mathcal{G}-\lambda)\varphi_\lambda=0$ and
$(\mathcal{G}-\lambda)
R_\lambda\widetilde{\pi}_{q_d}(\cdot,\theta) = - \widetilde{\pi}_{q_d}(\cdot,\theta)$, where
$\widetilde{\pi}_{q_d}(\cdot,\theta)\in\mathcal{L}^1(\mathcal{I})$
by Assumptions~\ref{Ass:X}.(iii) and~\ref{Ass:Payoff}.(iv) and the fact that $\text{id} \in \mathcal{L}^1(\mathcal{I})$, %\bl{[Let's decide if we want to include Lemma~\ref{piL1}][HS: Usually we have just assumed that $\text{id(x) = x}$ belong to $\mathcal{L}^1(\mathcal{I})$ so that these functions are integrable.]}, 
a direct
calculation yields
\begin{equation*}
\begin{aligned}
&\mathcal{G}w(x) + \pi(x,\theta) - \kappa^*(\theta) - \lambda\big(w(x)-w(y^*(\theta)) - q_d \, (x-y^*(\theta)) \big) \\
&\quad = \lambda \, (R_\lambda \widetilde{\pi}_{q_d}(\cdot,\theta))(y^*(\theta)) - \lambda \, q_d \, y^*(\theta) + \lambda \, A_d(\theta) \, \varphi_\lambda(y^*(\theta)) - \kappa^*(\theta), 
\quad x > y^*(\theta).
\end{aligned}
\end{equation*}
This implies that \eqref{eq:FBP} holds true for $x>y^*(\theta)$ if and only if 
\begin{equation} \label{Eq:beta_up1}
\kappa^*(\theta) = \lambda \, (R_\lambda \widetilde{\pi}_{q_d}(\cdot,\theta))(y^*(\theta)) - \lambda \, q_d \, y^*(\theta) + \lambda \, A_d(\theta) \, \varphi_\lambda(y^*(\theta)).
\end{equation} 
In order to verify \eqref{Eq:beta_up1}, we notice that \eqref{Eq:Resolvent1} and \eqref{Eq:GreenKernel1} imply  
\begin{equation*}
(R_\lambda \widetilde{\pi}_{q_d}(\cdot, \theta))(y) \, \varphi_\lambda'(y) - (R_\lambda \widetilde{\pi}_{q_d}(\cdot, \theta))'(y) \, \varphi_\lambda(y) = - S'(y) \, (\Phi_\lambda \widetilde{\pi}_{q_d}(\cdot, \theta))(y), 
\quad y \in \mathcal{I}.
\end{equation*}
Using this together with the definition of $A_d(\theta)$ and the identity $\varphi_\lambda'=-\lambda \, S' \, (\Phi_\lambda 1)$, the right-hand side of \eqref{Eq:beta_up1} equals
\begin{equation*}
\begin{aligned}
&\frac{(\Phi_\lambda \widetilde{\pi}_{q_d}(\cdot, \theta))(y^*(\theta)) - \lambda \, q_d \, y^*(\theta)\,(\Phi_\lambda 1)(y^*(\theta)) - q_d\,\varphi_\lambda(y^*(\theta))/S'(y^*(\theta))}{(\Phi_\lambda 1)(y^*(\theta))}  
= \frac{(\Phi_\lambda \pi_d(\cdot, \theta))(y^*(\theta))}{(\Phi_\lambda 1)(y^*(\theta))},
\end{aligned}
\end{equation*}
where the equality follows from \eqref{Eq:zyInt} and Assumption~\ref{Ass:Payoff}.(iii). 
Then \eqref{Eq:beta_up1} follows from the expression of $\kappa^*(\theta)$ in Theorem~\ref{Theorem: Expected long-run average payoff}.

\smallskip
{\it Region $x < a^*(\theta)$.}
The argument is analogous to the case $x>y^*(\theta)$, using
$(\mathcal{G}-\lambda)\psi_\lambda=0$, the identity \eqref{Eq:zaInt}, the definition
of $A_u(\theta)$, and the expression for $\kappa^*(\theta)$ from
Theorem~\ref{Theorem: Expected long-run average payoff}.

\smallskip
{\it Regularity.}
We first observe that $w'\in C(\mathcal{I})$ thanks to the definitions of $A_u(\theta)$ and $A_d(\theta)$, which imply 
$w'(a^*(\theta)-) %= (R_\lambda\widetilde{\pi}_{q_u}(\cdot,\theta))'(a^*(\theta)) + A_u(\theta)\psi_\lambda'(a^*(\theta)) 
= q_u$ and $w'(y^*(\theta)+) = q_d$. 
Hence $w\in C^1(\mathcal{I})$ and what remains to show is that $w \in C^2(\mathcal{I})$. 
By construction, $w\in C^2(\mathcal{I}\setminus
\{a^*(\theta),y^*(\theta)\})$, 
so it suffices to show that $w''$ is continuous at
$a^*(\theta)$ and $y^*(\theta)$.

By solving \eqref{eq:FBP} for $w''$ and using the continuity of
$w$, $w'$, $b$, $\sigma$, and $\pi(\cdot,\theta)$, together with
\begin{align*}
&w(x) - w(a^*(\theta)) - q_u \, (x-a^*(\theta)) \rightarrow 0 \quad \text{as } x \to a^*(\theta),\\
&w(x) - w(y^*(\theta)) - q_d \, (x-y^*(\theta)) \rightarrow 0 \quad \text{as } x \to y^*(\theta),
\end{align*}
we see that the one-sided limits of $w''$ at
$a^*(\theta)$ and $y^*(\theta)$ coincide. 
Hence
$w\in C^2(\mathcal{I})$, and \eqref{eq:FBP} holds on all of
$\mathcal{I}$.
\end{proof}

We now rewrite the free-boundary problem \eqref{eq:FBP} satisfied by the potential function $w$ in Lemma~\ref{Lemma: FBP} in the form
\begin{align} \label{eq:FBPK} 
\mathcal{G} w(x) + \pi(x, \theta) - \kappa^*(\theta) + \lambda \big( \mathcal{K}_y(x) + \mathcal{K}_a(x) \big) = 0, 
\end{align}
where we define
\begin{align} \label{Kya} 
\begin{split}
\mathcal{K}_y(x) &:= 
\big( (w(y^*(\theta)) - q_d \, y^*(\theta)) - (w(x) - q_d \, x) \big) \mathbbm{1}_{\{x \geq y^*(\theta)\}}, \\
\mathcal{K}_a(x) &:= 
\big( (w(a^*(\theta)) - q_u \, a^*(\theta)) - (w(x) - q_u \, x) \big) \mathbbm{1}_{\{x \leq a^*(\theta)\}}.
\end{split}
\end{align}

\begin{lemma} \label{Lemma: wx_max}
The potential function $w$ in Lemma~\ref{Lemma: FBP} satisfies
\[
w(x) - q_d \, x  \; \text{ is } \begin{cases}
\text{increasing}, & x \in (\mathfrak{l}, y^*(\theta)), \\
\text{decreasing}, & x \in (y^*(\theta), \mathfrak{r}), 
\end{cases}
\quad 
w(x) - q_u \, x  \; \text{ is } \begin{cases}
\text{increasing}, & x \in (\mathfrak{l}, a^*(\theta)), \\
\text{decreasing}, & x \in (a^*(\theta), \mathfrak{r}), 
\end{cases}
\]
so that %the mappings $x \mapsto w(x) - q_d \, x$ and $x \mapsto w(x) - q_u \, x$ attain their unique global maxima at $y^*(\theta)$ and $a^*(\theta)$, so that 
$\mathcal{K}_y$ and $\mathcal{K}_a$ from \eqref{Kya} satisfy
\begin{align*}
\mathcal{K}_y(x) = \sup_{z \leq x} \big\{ (w(z) - q_d \, z) - (w(x) - q_d \, x) \big\}, 
\quad 
\mathcal{K}_a(x) = \sup_{z \geq x} \big\{ (w(z) - q_u \, z) - (w(x) - q_u \, x) \big\}.
\end{align*}
Furthermore, there exists $C > 0$ such that $w$ from Lemma~\ref{Lemma: FBP} satisfies 
\[
|w(x)| \leq  C(1+|x|), \quad x \in \mathcal{I}.
\]
\end{lemma}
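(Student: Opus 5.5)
The monotonicity claims reduce to sign statements for $w'-q_d$ and $w'-q_u$, which I will check region by region using the explicit form of $w'$ in \eqref{Eq:wprime}. In the interior region $(a^*(\theta),y^*(\theta))$, put $v(x):=w'(x)/S'(x)$. Then $v'(x)=(\kappa^*(\theta)-\pi(x,\theta))m'(x)$, with $v(a^*)=q_u/S'(a^*)$ and $v(y^*)=q_d/S'(y^*)$.

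I will compare $w'$ with the constants by means of the functions
\[
D_q(x):=\frac{w'(x)-q}{S'(x)}=v(x)-\frac{q}{S'(x)}, \qquad q\in\{q_u,q_d\}.
\]
By \eqref{Eq:DriftToScale},
\[
D_q'(x)=\big(\kappa^*(\theta)-\pi(x,\theta)-q\,b(x)\big)m'(x)=\big(\kappa^*(\theta)-\pi_q(x,\theta)\big)m'(x),
\]
where $\pi_q=\pi_u$ or $\pi_d$. Since $\pi_q$ is single-peaked, $D_q$ is first decreasing and then increasing, so it has at most two zeros. For $q=q_u$ we have $D_{q_u}(a^*)=0$. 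Also $D_{q_u}(y^*)=(q_d-q_u)/S'(y^*)<0$, so $D_{q_u}<0$ on $(a^*,y^*)$, provided $D_{q_u}$ is decreasing just to the right of $a^*$. This holds if $\kappa^*<\pi_u(a^*)$, which follows from $\kappa^*=H_u(a^*)$ and $a^*<\hat z_u$: by Lemma~\ref{Lemma: FractionProperties}.(ii), $H_u<\pi_u$ on $(\mathfrak l,\hat z_u)$, so $\kappa^*=H_u(a^*)<\pi_u(a^*)$. The symmetric argument handles $q=q_d$: here $D_{q_d}(y^*)=0$, $D_{q_d}(a^*)=(q_u-q_d)/S'(a^*)>0$, and $\kappa^*=H_d(y^*)<\pi_d(y^*)$ because $y^*>\hat z_d$. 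Hence $w'>q_d$ on $(a^*,y^*)$. The same two facts give $w'<q_u$ there.

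Outside $(a^*,y^*)$ I will use the free-boundary equation \eqref{eq:FBP}. Take $x>y^*$ and set $u(x):=w(x)-q_d x$, so $u'(y^*)=0$. Differentiating \eqref{eq:FBP} and writing $p:=w'-q_d$ gives
\[
\tfrac{\sigma^2}{2}p''+(b+\sigma\sigma')p'+(b'-\lambda)p=-\partial_x\pi_d(x,\theta).
\]
Since $b'-\lambda<0$, a maximum-principle argument applies: if $p$ had a positive local maximum, the right-hand side would have to be negative there. I expect this to be the \emph{main obstacle}, because $\partial_x\pi_d$ changes sign at $\hat x_d$, which may lie on either side of $y^*$. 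Instead I will work with the explicit representation $w'=(R_\lambda\tilde\pi_{q_d})'+A_d\varphi_\lambda'$. Using the identity preceding \eqref{Eq:beta_up1}, $p(x)$ can be expressed as $\frac{S'(x)}{\varphi_\lambda(x)}\big[\ldots\big]$, where the bracket equals
\[
\int_x^{\mathfrak r}\varphi_\lambda(z)\big(\pi_d(z,\theta)-H_d(y^*)\big)m(dz)
\]
up to a term that vanishes identically because $\kappa^*=H_d(y^*)$. Equivalently, $p(x)$ has the sign of $\big(\Phi_\lambda 1\big)(x)\big(H_d(x)-H_d(y^*)\big)$, and this is negative for $x>y^*\ge\hat z_d$ because $H_d$ is decreasing there. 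The same representation shows $w'<q_u$ on this region.

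For $x<a^*$ the analogous argument with $\Psi_\lambda$ gives $w'-q_u$ of the sign of $H_u(x)-H_u(a^*)$, which is negative for $x<a^*<\hat z_u$; it also gives $w'>q_d$ there. Together these yield all four monotonicity claims. The two supremum formulas for $\mathcal K_y$ and $\mathcal K_a$ then follow immediately, since $w(z)-q_dz$ on $z\le x$ is maximised at $z=y^*\wedge x$, and similarly for $\mathcal K_a$.

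For the linear growth bound I will use the same formulas. On $[a^*,y^*]$ the function $w$ is bounded. For $x\ge y^*$ we have $|w'|\le q_u$ by the sandwich $q_d\ge w'$ together with $w'-q_d=O(1)$. More directly, the bound $q_d\ge w'\ge$ (lower bound) follows from the representation: $(R_\lambda\tilde\pi_{q_d})'$ and $\varphi_\lambda'$ are controlled using the linear growth of $\pi$ from Assumption~\ref{Ass:Payoff}.(iv) and of $b$ from Assumption~\ref{Ass:X}.(ii). Integrating the resulting bound on $w'$ then gives $|w(x)|\le C(1+|x|)$.
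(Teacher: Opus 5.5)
Your interior argument is correct and is the paper's Step~2 (your $D_{q_d}$ equals $-U$). It also supplies the bound $w'<q_u$ on $(a^*,y^*)$, which the paper leaves to ``similar arguments''. One misstatement: $D_q$ is not globally ``first decreasing then increasing'', since $D_q'$ has sign pattern $+,-,+$. On $[a^*,y^*]$ that shape holds for $q_u$, whereas $D_{q_d}$ increases then decreases there; your endpoint-slope arguments are nonetheless right.

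The genuine gap is in the outer regions. The term you say ``vanishes identically because $\kappa^*=H_d(y^*)$'' does not vanish. Carrying out your computation with the identity preceding \eqref{Eq:beta_up1} and with \eqref{Eq:zyInt} gives, for $x\ge y^*$,
\[
w'(x)-q_d=\frac{S'(x)\,(\Phi_\lambda 1)(x)}{\varphi_\lambda(x)}\big(H_d(x)-\Lambda(x)\big),\qquad \Lambda(x):=\lambda\big((R_\lambda\widetilde\pi_{q_d}(\cdot,\theta))(x)-q_d\,x+A_d(\theta)\,\varphi_\lambda(x)\big).
\]
Here $\Lambda(x)=\kappa^*(\theta)$ only at $x=y^*$; that is exactly \eqref{Eq:beta_up1}. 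Since $\Lambda'=\lambda(w'-q_d)$, in general $\Lambda(x)=\kappa^*(\theta)+\lambda\big(w(x)-w(y^*)-q_d(x-y^*)\big)$. So your bracket still contains $w$ itself (it is just \eqref{eq:FBP} rearranged), and the sign of $w'-q_d$ does not follow from $H_d(x)<H_d(y^*)$.

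The missing step is to divide by $\varphi_\lambda'$ rather than $\varphi_\lambda$. Since $(R_\lambda\widetilde\pi_{q_d})'-q_d=(R_\lambda\pi_d)'$, the definition of $A_d$ gives $(w'-q_d)/\varphi_\lambda'=L_d(x)-L_d(y^*)$ with $L_d:=(R_\lambda\pi_d)'/\varphi_\lambda'$. Moreover $L_d'=\frac{m'}{\lambda\,\Phi_\lambda 1}(\pi_d-H_d)>0$ on $(\hat z_d,\mathfrak r)$ by Lemma~\ref{Lemma: FractionProperties}.(i). Equivalently, $w'(x)-q_d=S'(x)(\Phi_\lambda 1)(x)\int_{y^*}^x H_d'(z)\,\varphi_\lambda(z)^{-1}\,dz$. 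So the sign is governed by $H_d'$ along $(y^*,x)$, not by $H_d(x)-H_d(y^*)$.

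For $x<a^*$ your sign is reversed as well. If $w'-q_u$ had the sign of $H_u(x)-H_u(a^*)<0$, then $w-q_ux$ would be decreasing on $(\mathfrak l,a^*)$, contradicting the statement. What is needed, and true, is $w'>q_u$ there. This follows because $L_u:=(R_\lambda\pi_u)'/\psi_\lambda'$ is decreasing on $(\mathfrak l,a^*]$, as $L_u'=\frac{m'}{\lambda\,\Psi_\lambda 1}(H_u-\pi_u)<0$ on $(\mathfrak l,\hat z_u)$.

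Finally, the growth bound is not proved. The uniform bounds on $w'$ you invoke are unjustified and false in general. In the benchmark of Remark~\ref{Rem:Benchmark}, $(R_\lambda\widetilde\pi_{q_u})'(x)\propto x^{\delta-1}\to\infty$ as $x\downarrow 0$ while $\psi_\lambda'(x)\to 0$, so $w'$ is unbounded near $\mathfrak l$. Work with $w$ itself instead:
\begin{itemize}
\item The upper bound $w(x)\le w(y^*)+q_d(x-y^*)$ is immediate from the global maximum of $w-q_dx$ at $y^*$.
\item The lower bound comes from $w(x)=w(y^*)+(R_\lambda\widetilde\pi_{q_d})(x)-(R_\lambda\widetilde\pi_{q_d})(y^*)+A_d\big(\varphi_\lambda(x)-\varphi_\lambda(y^*)\big)$ for $x\ge y^*$, and its $\psi_\lambda$-analogue for $x\le a^*$.
\item These use $R_\lambda\widetilde\pi_q\ge 0$ (as $\pi\ge0$ and $\mathcal I\subseteq\mathbb R_+$), together with boundedness of $\varphi_\lambda$ on $[y^*,\mathfrak r)$ and of $\psi_\lambda$ on $(\mathfrak l,a^*]$.
\end{itemize}
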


\begin{proof}
Recall from Theorem~\ref{thm:barriers}.(i) that $a^*(\theta) < \hat z_u$ and $y^*(\theta) > \hat z_d$. Moreover, \eqref{Eq:Resolvent1}, \eqref{Eq:zyInt}, \eqref{Eq:zaInt} and $\lambda R_\lambda 1=1$ imply that 
\[
R_\lambda(\lambda \, \mathrm{id} - b)(x) = \mathrm{id}(x), 
\quad \text{where} \quad  \mathrm{id}(x):=x. 
\] 
Consequently, the function $\widetilde{\pi}_{q}$ defined by \eqref{tilpi} for $q \in \{q_d, q_u\}$ satisfies 
\begin{equation} \label{Eq:Rpiq}
(R_\lambda \widetilde{\pi}_{q_d}(\cdot,\theta))' - q_d = (R_\lambda \pi_d(\cdot,\theta))'
\quad \text{and} \quad
(R_\lambda \widetilde{\pi}_{q_u}(\cdot,\theta))'-q_u = (R_\lambda \pi_u(\cdot,\theta))'.
\end{equation}

\smallskip
{\it Step 1: $w'(x) < q_d$ for $x \in (y^*(\theta), \mathfrak{r})$.} 
From \eqref{Eq:wprime}, the definition of $A_d(\theta)$ and \eqref{Eq:Rpiq}, we find
\begin{equation*}
\frac{w'(x)-q_d}{\varphi_\lambda '(x)} = L_d(x) - L_d(y^*(\theta)), 
\quad x \geq y^*(\theta), 
\quad \text{where} \quad
L_d(x):= \frac{(R_\lambda \pi_d(\cdot,\theta))'(x)}{\varphi_\lambda '(x)}.
\end{equation*}
Given that $\varphi_\lambda'<0$, it suffices to show that $L_d(\cdot)$ is increasing on $[y^*(\theta), \mathfrak{r})$ to prove that $w'(x) < q_d$ for $x > y^*(\theta)$.
To that end, we differentiate $L_d$, use \eqref{Eq:Resolvent1} and $\varphi_\lambda' = - \lambda \, S' \, (\Phi_\lambda 1)$ to get 
\begin{equation*}
L_d'(x) = \frac{m'(x)}{\lambda(\Phi_\lambda 1)(x)}\bigg(\pi_d(x,\theta) - \frac{(\Phi_\lambda  \pi_d(\cdot,\theta))(x)}{(\Phi_\lambda  1)(x)}\bigg),
\end{equation*}
which is positive on $[y^*(\theta),\mathfrak{r}) \subset (\hat z_d,\mathfrak{r})$ by Lemma~\ref{Lemma: FractionProperties}.(i).

\smallskip
{\it Step 2: $w'(x) > q_d$ for $x \in [a^*(\theta), y^*(\theta))$.}
Define
\begin{equation*}
U(x) := \frac{q_d-w'(x)}{S'(x)} = \int_x^{y^*(\theta)} (\kappa^*(\theta)-\pi_d(z,\theta)) \, m(dz),
\end{equation*}
where the equality follows from \eqref{Eq:wprime}, $w'(y^*(\theta))=q_d$ and \eqref{Eq:DriftToScale}. 
% Consequently, using $y^*(\theta)>\hat z_d$ from Theorem~\ref{thm:barriers}.(i), the expression of $\kappa^*(\theta)$ combined with \eqref{eq:FOC1} in Theorem~\ref{Theorem: Expected long-run average payoff} and Lemma~\ref{Lemma: FractionProperties}.(i), we obtain  
% $$
% U'(x) = (\pi_d(x,\theta)-\kappa^*(\theta)) \, m'(x) \begin{cases} 
% > 0, \quad x \in (a^*(\theta) \vee \bar z_d, y^*(\theta)) \\
% < 0, \quad x \in (a^*(\theta), a^*(\theta) \vee \bar z_d)
% \end{cases}
% $$
% where $\bar z_d \leq \hat z_d$ denotes the unique solution, whenever it
% exists, of $\pi_d(\bar z_d,\theta)=\kappa^*(\theta)$.
Consequently,
\[
U'(x) = \bigl(\pi_d(x,\theta)-\kappa^*(\theta)\bigr) \, m'(x).
\]
Since $\pi_d(\cdot,\theta)$ is strictly increasing on
$(\mathfrak{l},\hat x_d)$ and strictly decreasing on
$(\hat x_d,\mathfrak r)$, while
$\pi_d(y^*(\theta),\theta) > H_d(y^*(\theta)) = \kappa^*(\theta)$ 
thanks to $y^*(\theta)>\hat z_d$ and Lemma~\ref{Lemma: FractionProperties}.(i), we have $\pi_d(x,\theta) > \kappa^*(\theta)$ for all $x \in [\hat x_d, y^*(\theta)]$. Hence the function
$\pi_d(\cdot,\theta)-\kappa^*(\theta)$ changes sign at most once on
$(a^*(\theta),y^*(\theta))$ and, if it does, the sign changes from
negative to positive.
Hence $U$ is either always increasing or first decreasing and then increasing on
$[a^*(\theta),y^*(\theta)]$.
% Since $\pi_d(\cdot,\theta)$ is unimodal and $\pi_d(y^*(\theta), \theta) > H_d(y^*(\theta)) = \kappa^*(\theta)$ by Lemma~\ref{Lemma: FractionProperties} and \eqref{Eq:kappa-star}, $U'$ changes sign at most once on $[a^*(\theta),y^*(\theta)]$, from negative to positive. Hence $U(x) < \max\{U(a^*(\theta)), U(y^*(\theta))\}$ for $x \in (a^*(\theta), y^*(\theta))$. 
Combining this with  
$U(a^*(\theta)) = (q_d - q_u) / {S'(a^*(\theta))} < 0 = U(y^*(\theta))$, implies that $U < 0$ and thus $w'(x) > q_d$ for all $x \in [a^*(\theta), y^*(\theta))$. 

\smallskip
{\it Step 3: $w'(x) > q_u > q_d$ for $x \in (\mathfrak{l}, a^*(\theta))$}. 
As above, from \eqref{Eq:wprime}, the definition of $A_u(\theta)$ and \eqref{Eq:Rpiq}, we find
\begin{equation*}
\frac{w'(x)-q_u}{\psi_\lambda '(x)} = L_u(x) - L_u(a^*(\theta)), 
\quad x \leq a^*(\theta), 
\quad \text{where} \quad
L_u(x):= \frac{(R_\lambda \pi_u(\cdot,\theta))'(x)}{\psi_\lambda '(x)}.
\end{equation*}
Given that $\psi_\lambda'>0$, it suffices to show that $L_u(\cdot)$ is decreasing on $(\mathfrak{l},a^*(\theta)]$ to prove that $w'(x) > q_u$ for $x < a^*(\theta)$.
To that end, we differentiate $L_u$, use \eqref{Eq:Resolvent1} and $\psi_\lambda' = \lambda \, S' \, (\Psi_\lambda 1)$ to get 
\begin{equation*}
L_u'(x) = \frac{m'(x)}{\lambda(\Psi_\lambda 1)(x)}\bigg(\frac{(\Psi_\lambda  \pi_u(\cdot,\theta))(x)}{(\Psi_\lambda  1)(x)} - \pi_u(x,\theta)\bigg),
\end{equation*}
which is negative on $(\mathfrak{l}, a^*(\theta)] \subset (\mathfrak{l},\hat z_u)$ by Lemma~\ref{Lemma: FractionProperties}.(ii). 

\smallskip
{\it Conclusion.}
Combining Steps 1--3, we get that $w(x) - q_d \, x$ is increasing for $x \in (\mathfrak{l}, y^*(\theta))$ and decreasing for $x \in (y^*(\theta), \mathfrak{r})$, which implies that its unique global maximum occurs at $y^*(\theta)$ (using similar arguments we also obtain the claim for $x \mapsto w(x) - q_u x$ and its unique global maximum at $a^*(\theta)$) and  
\begin{equation} \label{w<}
w(x) \leq w(y^*(\theta)) + q_d \, (x-y^*(\theta)), \quad x \in \mathcal{I}.
\end{equation}
Conversely, integrating the expression for $w'$ in \eqref{Eq:wprime} and using $R_\lambda\widetilde{\pi}_{q}(\cdot,\theta) \geq 0$ for $q \in \{q_u,q_d\}$, which is implied by $\widetilde{\pi}_{q}(\cdot,\theta) \geq 0$ for $x \in \mathcal{I}$ and $\lambda>0$ by its definition \eqref{tilpi}, together with the boundedness of $\psi_\lambda$ on $(\mathfrak{l}, a^*(\theta)]$ and of $\varphi_\lambda$ on $[y^*(\theta), \mathfrak{r})$, shows that $w$ is bounded from below on these two intervals. 
In addition, given that $w$ is bounded also on $[a^*(\theta), y^*(\theta)]$ by continuity, we conclude that it is bounded from below on the whole $\mathcal{I}$.
Combining this with \eqref{w<} yields that 
$|w(x)| \leq C(1+|x|)$ for all $x \in \mathcal{I}$, and completes the proof.
\end{proof}

The Poissonian singular control of threshold type $\nu(a^*(\theta),y^*(\theta))$ belongs to $\mathcal{A}$, and by Theorem~\ref{Theorem: Expected long-run average payoff} its long-run average payoff equals $\kappa^*(\theta)$. Hence $\kappa^*(\theta) \leq \beta(x;\theta)$, where $\beta(x;\theta)$ is defined in \eqref{Eq:BetaLambda}. The following verification theorem establishes the reverse inequality, so that $\beta(x;\theta) = \kappa^*(\theta)$ and the threshold policy is optimal.

\begin{theorem}[Verification] \label{thm:verification}
Recall the thresholds $(a^*(\theta),y^*(\theta))$ from Theorem~\ref{thm:barriers} and their associated value $\kappa^*(\theta)$ defined by \eqref{Eq:PayoffForAandY}, whose expression is given by Theorem~\ref{Theorem: Expected long-run average payoff}, % with the thresholds from Theorem~\ref{thm:barriers}, 
and $\beta(x;\theta)$ defined by \eqref{Eq:BetaLambda}. 
Then, $\kappa^*(\theta) \geq \beta(x;\theta)$.
\end{theorem}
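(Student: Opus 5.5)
We will run a standard ergodic verification argument with the potential $w$ from Lemma~\ref{Lemma: FBP}. Fix an arbitrary $\nu \in \mathcal{A}$ with $J(x;\nu,\theta) > -\infty$; otherwise there is nothing to prove. Since $w \in C^2(\mathcal{I})$, we apply It\^o's formula for semimartingales with jumps to $w(X^\nu_t)$ on $[0,T]$. Between Poisson arrivals the process is a diffusion, and at each $T^{(i)}$ there is a jump from $X^\nu_{T^{(i)}-}$ to $X^\nu_{T^{(i)}-}+\eta_{T^{(i)}}$. This gives
\[
w(X^\nu_T) = w(x) + \int_0^T \mathcal{G}w(X^\nu_{s-})\,ds + M_T + \int_0^T \big( w(X^\nu_{s-}+\eta_s) - w(X^\nu_{s-}) \big)\,dN_s ,
\]
where $M_T=\int_0^T \sigma w'(X^\nu_{s-})dW_s$. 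We then add and subtract the control costs $q_u\,d\nu^+ - q_d\,d\nu^-$, both of which are integrals against $dN$.

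Next we compare each jump term with $\mathcal{K}_a,\mathcal{K}_y$ from \eqref{Kya}. Write $z=X^\nu_{s-}+\eta_s$, with $\eta=\eta^+-\eta^-$. For any admissible jump we claim
\[
w(z)-w(X_{s-}) - q_u\eta_s^+ + q_d\eta_s^- \le \mathcal{K}_a(X_{s-})+\mathcal{K}_y(X_{s-}).
\]
\begin{itemize}
\item If the net jump is upward ($z \ge X_{s-}$), the left side is at most $(w(z)-q_uz)-(w(X_{s-})-q_uX_{s-})$, because $\eta^+\ge z-X_{s-}$ and $\eta^- \ge 0$ with $q_u>q_d$. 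By Lemma~\ref{Lemma: wx_max} this is bounded by $\mathcal{K}_a(X_{s-})$.
\item If the net jump is downward, the same reasoning bounds the left side by $\mathcal{K}_y(X_{s-})$.
\end{itemize}
Both $\mathcal{K}$'s are nonnegative. We then compensate the $dN$-integral, i.e.\ replace $dN_s$ by $\lambda\,ds$ plus a martingale; this uses predictability of $\eta$ and the compensation formula already used in Theorem~\ref{Theorem: Expected long-run average payoff}. Combining with \eqref{eq:FBPK} gives $\mathcal{G}w + \lambda(\mathcal{K}_a+\mathcal{K}_y) = \kappa^*(\theta)-\pi$, and hence
\[
\mathbb{E}_x\Big[\int_0^T \pi(X^\nu_s,\theta)ds - \int_0^T q_u d\nu^+_s + \int_0^T q_d d\nu^-_s\Big] \le \kappa^*(\theta)T + w(x) - \mathbb{E}_x[w(X^\nu_T)],
\]
provided the martingale terms have zero expectation.

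Finally, we divide by $T$ and let $T\to\infty$. The linear growth bound $|w(x)|\le C(1+|x|)$ from Lemma~\ref{Lemma: wx_max}, together with the admissibility condition $\limsup_T \frac1T\mathbb{E}_x|X^\nu_T|=0$, kills the boundary term. This yields $J(x;\nu,\theta)\le\kappa^*(\theta)$, and taking the supremum over $\nu$ gives $\beta(x;\theta)\le\kappa^*(\theta)$.

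The main obstacle is justifying that the martingale terms vanish in expectation. For the stochastic integral $M$, and for the compensated jump integral of the $w$-increments and the costs, we plan to localize with stopping times $\tau_n$ (exit of $X^\nu$ from compact subintervals of $\mathcal{I}$, capped by $n$). On $[0,\tau_n]$, $w'$ and $\sigma$ are bounded, so the localized terms are true martingales. The jump parts are integrable because $w$ has linear growth and $\mathbb{E}_x[|\nu_T|]<\infty$. We then pass $n\to\infty$. The terms $\int\pi$ and $\int(\kappa^*-\pi)$ are handled by Fatou and monotone convergence, using $\pi\ge0$ and its linear growth from Assumption~\ref{Ass:Payoff}(iv). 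The term $w(X^\nu_{T\wedge\tau_n})$ is handled by the linear growth of $w$ and a uniform integrability bound for $\sup_{t\le T}|X^\nu_t|$. That bound we will derive from the linear growth of $b,\sigma$ (Assumption~\ref{Ass:X}(ii)), Gronwall's inequality, and $\mathbb{E}_x[|\nu_T|]<\infty$.
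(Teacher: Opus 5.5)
Your proposal is correct and follows essentially the same route as the paper. Both arguments apply It\^o's formula to $w(X^\nu)$, bound each jump increment plus intervention cost pathwise by $(\mathcal{K}_a+\mathcal{K}_y)(X^\nu_{s-})\,\Delta N_s$ via Lemma~\ref{Lemma: wx_max}, compensate the Poisson integral, invoke \eqref{eq:FBPK}, localize, and conclude with the linear growth of $w$ and the admissibility condition on $\frac1T\mathbb{E}_x[|X^\nu_T|]$.

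Your case split into net upward and downward jumps, and your treatment of the localization limit, are more explicit than the paper's, which simply lets $\rho\to\infty$. For the term $w(X^\nu_{T\wedge\tau_n})$, the lower bound on $w$ established in the proof of Lemma~\ref{Lemma: wx_max} already lets Fatou's lemma give the needed inequality. So the Gronwall bound on $\sup_{t\le T}|X^\nu_t|$ is unnecessary, which is just as well, since it would require reading $\mathbb{E}_x[|\nu_T|]<\infty$ as integrability of the total variation of $\nu$.
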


\begin{proof} 
Recall $w$ defined via \eqref{Eq:wprime} with the thresholds $(a^*(\theta),y^*(\theta))$ from Theorem~\ref{thm:barriers}, and satisfying the properties in Lemma \ref{Lemma: FBP}. 
For any fixed $T>0$ and $\nu \in \mathcal{A}$ from \eqref{admissible controls}, we define the stopping time $\overline{\tau}_{\rho} := \inf \{ t \geq 0 \mid X^\nu_t \not\in (\mathfrak{l} + \frac1\rho, \mathfrak{r} - \frac1\rho) \}$, and apply the generalised It\^o's lemma to $w(X^{\nu})$, to get
\begin{equation*}  
\begin{aligned}
w(X_{T \wedge \overline{\tau}_{\rho}}^{\nu}) & = w(x) + \int_0^{T \wedge \overline{\tau}_{\rho}} \mathcal{G}w(X_s^{\nu})ds+ \int_0^{T \wedge \overline{\tau}_{\rho}}  \sigma(X^{\nu}_s)w'(X_s^{\nu})dW_s + \sum_{s \leq T \wedge \overline{\tau}_{\rho}} 
\hspace{-1mm}(w(X^{\nu}_s)-w(X^{\nu}_{s-})), 
\end{aligned}
\end{equation*}
% Reorganising the above gives \begin{equation}  \label{Eq:RHSneeded}  \begin{aligned} & w(X_{T \wedge \overline{\tau}_{\rho}}^{\nu}) - w(x) - \int_0^{T \wedge \overline{\tau}_{\rho}}  \sigma(X^{\nu}_s)w'(X_s^{\nu})dW_s \\ & = \int_0^{T \wedge \overline{\tau}_{\rho}} \mathcal{G}w(X_s^{\nu})ds + \sum_{s \leq T \wedge \overline{\tau}_{\rho}} [w(X^{\nu}_s)-w(X^{\nu}_{s-})]. \end{aligned} \end{equation}
which in light of \eqref{eq:FBPK} takes the form
\begin{equation} \label{Eq:RHSneeded}  
\begin{aligned}
w(X_{T \wedge \overline{\tau}_{\rho}}^{\nu}) & = w(x) - \int_0^{T \wedge \overline{\tau}_{\rho}} \big( \pi(X_s^{\nu}, \theta) - \kappa^*(\theta) + \lambda \big( \mathcal{K}_y(X_s^{\nu}) + \mathcal{K}_a(X_s^{\nu}) \big) \big) ds \\ 
&\quad +\int_0^{T \wedge \overline{\tau}_{\rho}}  \sigma(X^{\nu}_s)w'(X_s^{\nu})dW_s + \sum_{s \leq T \wedge \overline{\tau}_{\rho}} 
\hspace{-1mm}(w(X^{\nu}_s)-w(X^{\nu}_{s-})), 
\end{aligned}
\end{equation}

Because the control can jump only at Poisson jump times, we conclude from Lemma~\ref{Lemma: wx_max} that 
\begin{align*}
w(X^{\nu}_s)-w(X^{\nu}_{s-}) + q_d \, \Delta \nu_s^- -q_u \,  \Delta \nu_s^+ 
\leq \big( \mathcal{K}_y(X^{\nu}_{s-})+\mathcal{K}_a(X^{\nu}_{s-}) \big) \Delta N_s
\end{align*}
Using this in \eqref{Eq:RHSneeded} thus yields     \begin{equation} \label{Eq:w2verification}
\begin{aligned}
&w(X_{T \wedge \overline{\tau}_{\rho}}^{\nu}) 
\leq w(x) + \kappa^*(\theta) ({T \wedge \overline{\tau}_{\rho}}) - \int_0^{T \wedge \overline{\tau}_{\rho}} \big(\pi(X_s^{\nu},\theta) \, ds + q_d \, d\nu_s^- - q_u d \, \nu_s^+ \big) \\
&+\int_0^{T \wedge \overline{\tau}_{\rho}} \hspace{-3mm}  \sigma(X^{\nu}_s)w'(X_s^{\nu})dW_s 
+ \int_0^{T \wedge \overline{\tau}_{\rho}}  \hspace{-3mm}\mathcal{K}_y(X_{s-}^{\nu}) \big(dN_s - \lambda ds \big) 
+ \int_0^{T \wedge \overline{\tau}_{\rho}}  \hspace{-3mm}\mathcal{K}_a(X_{s-}^{\nu})  \big(dN_s - \lambda ds \big), 
\end{aligned}
\end{equation}
Then, define the compensated Poisson process $\tilde{N}_t := N_t - \lambda t$ and notice that 
\begin{align*}
&M_t := \int_0^{t \wedge \overline{\tau}_{\rho}} \sigma(X^{\nu}_s)w'(X_s^{\nu})dW_s 
\quad \text{and} \quad 
Z_t := \int_0^{t \wedge \overline{\tau}_{\rho}} \big(\mathcal{K}_y(X_{{s-}}^{\nu})+ \mathcal{K}_a(X_{{s-}}^{\nu}) \big) d \tilde{N}_s,
\end{align*}
are martingales since their integrands are locally bounded. 
Hence, taking expectations in \eqref{Eq:w2verification} and using that $\mathbb{E}_x[Z_{T \wedge \overline{\tau}_{\rho}} + M_{T \wedge \overline{\tau}_{\rho}}] =0$, we get
\begin{equation*}  
\begin{aligned}
&\mathbb{E}_x \Big[w(X_{T \wedge \overline{\tau}_{\rho}}^{\nu}) + \int_0^{T \wedge \overline{\tau}_{\rho}} \big(\pi(X_s^{\nu}, \theta)ds + q_d d\nu_s^- - q_u d \nu_s^+ \big) \Big]  \leq w(x) + \kappa^*(\theta) T. \\
\end{aligned}
\end{equation*}
Taking first the limit as $\rho \to \infty$, then dividing by $T$, and subsequently taking the limit as $T \to \infty$, yields 
\begin{equation} \label{limw} 
\begin{aligned} 
\liminf_{T\to \infty} \frac{1}{T} \, \mathbb{E}_x \Big[ w(X_T^{\nu}) + \int_0^{T}  (\pi(X_s^{\nu},\theta)ds + q_d d\nu_s^- - q_u d \nu_s^+) \Big] \leq \kappa^*(\theta).
\end{aligned}
\end{equation}
Lastly, we know from Lemma~\ref{Lemma: wx_max} that there exists $C>0$ such that $|w(x)| \leq  C(1+|x|)$ for $x \in \mathcal{I}$, thus for every $\nu \in \mathcal{A}$, given that $X^\nu \in \mathcal{I}\subseteq\mathbb{R}_+$, we get
$$
\limsup_{T \to \infty} \frac{1}{T} \, \mathbb{E}_x[|w(X_T^{\nu})|] 
\leq \limsup_{T \to \infty} \Big(\frac{C}{T} + \frac{C}{T} \, \mathbb{E}_x[X_T^{\nu}] \Big)
= C \, \limsup_{T \to \infty} \frac{1}{T} \, \mathbb{E}_x[X_T^{\nu}] 
= 0.
$$
Combining this with \eqref{limw} and taking the supremum over all $\nu \in \mathcal{A}$ on the left-hand side, we obtain that $\beta(x;\theta) \leq \kappa^*(\theta)$ and complete the proof.
\end{proof}

\section{Existence and Uniqueness of the Constrained Ergodic MFG Equilibrium} \label{sec:MFG}

In this section, we finally prove the existence and uniqueness of the mean-field equilibrium as in Definition~\ref{def:Equilibrium}. 
To achieve this, we first prove that the optimal boundaries characterising the solution to the constrained ergodic singular control problem in Section \ref{sec:barriers} are continuous in the mean-field parameter $\theta$ (see Proposition \ref{Prop:continuity}).
Then, we prove the continuity of the stationary distribution of the optimally controlled process with respect to the mean-field parameter $\theta$ as well. 
Using these two results, we finally establish the existence of an equilibrium via a fixed-point theorem and its uniqueness via suitable monotonicity arguments (see Theorem~\ref{thm:ExistenceMFGE}).

We begin our analysis by recalling that the potential function $w(\cdot;\theta)$, together with the optimal long-run average $\kappa^*(\theta)$ and the optimal boundaries $a^*(\theta)$ and $y(\theta)$, solve the free-boundary problem in Lemma \ref{Lemma: FBP}, which also establishes that $w(\cdot;\theta) \in C^2(\mathcal{I})$. 
Hence, setting $u(x;\theta) = w'(x;\theta)$, so that $u(\cdot;\theta)\in C^1(\mathcal{I})$ and $u_{xx}(\cdot;\theta)\in L^\infty_{\mathrm{loc}}(\mathcal{I})$, standard differentiation of the ordinary differential equation (ODE) \eqref{eq:FBP} yields the following ODE system:
\begin{equation} \label{eq:u-ode}
\widehat{\mathcal G}u(x;\theta)+b'(x)u(x;\theta)+\pi_x(x,\theta)
= \begin{cases}
\lambda\big(u(x;\theta)-q_d\big), & x \geq y^*(\theta),\\
0, & a^*(\theta)<x<y^*(\theta),\\
\lambda\big(u(x;\theta)-q_u\big), & x \leq a^*(\theta),
\end{cases}
\end{equation}
where 
$\widehat{\mathcal G}\phi(x)
:= \frac{1}{2}\sigma^2(x)\phi''(x)
+\big(b(x)+\sigma(x)\sigma'(x)\big)\phi'(x)$.
Moreover, $u$ satisfies the inequalities
\begin{equation} \label{eq:u-ineq}
\begin{cases}
u(x;\theta)\geq q_u, & x \leq a^*(\theta),\\
q_d<u(x;\theta)<q_u, & a^*(\theta)<x<y^*(\theta),\\
u(x;\theta)\leq q_d, & x \geq y^*(\theta).
\end{cases}
\end{equation}

%Together with \eqref{eq:u-ode}, these imply the differential inequalities
%\begin{equation} \label{eq:u-diff-ineq}
%\begin{cases}
%\widehat{\mathcal G}u(x;\theta)+b'(x)u(x;\theta)+\pi_x(x,\theta)\geq0,
%& x \leq a^*(\theta),\\
%\widehat{\mathcal G}u(x;\theta)+b'(x)u(x;\theta)+\pi_x(x,\theta)=0,
%& a^*(\theta)<x<y^*(\theta),\\
%\widehat{\mathcal G}u(x;\theta)+b'(x)u(x;\theta)+\pi_x(x,\theta)\leq0,
%& x \geq y^*(\theta).
%\end{cases}
%\end{equation}

Let then $(\widehat{X}_{t})_{t\geq 0}$ be the unique strong solution to the SDE
\begin{equation} \label{eq:hatX}
d\widehat{X}_{t} = \big( b(\widehat{X}_{t}) + \sigma (\widehat{X}_{t}) \sigma'(\widehat{X}_{t}) \big) \, dt + \sigma(\widehat{X}_{t}) d\widehat{W}_{t}, 
\quad \widehat{X}_{0} = x \in \mathcal{I},
\end{equation}
for an $\mathbb{F}$-adapted Brownian motion $\widehat{W}$, 
which follows from Assumption~\ref{Ass:X} and the fact that $\widehat{X}$ is a Girsanov transformation of $X$ with kernel $\sigma\sigma'$.
This further implies that the boundaries $\mathfrak{l}$ and $\mathfrak{r}$ are natural for $\widehat{X}$.
We then define 
%\bl{[HS: Do we need that the boundaries are natural to $\hat{X}$ as well? This doesn't automatically transfer. Also do we need that $\sigma \sigma'$ grows linearly?] GIORGIO: The fact that the boundaries of $\widehat{X}$ are natural follow from the fact that $\widehat{X}$ is just $X$ under a new probability measure (via Girsanov) with kernel $\sigma\sigma'$ (for that we need to assume that $\E[e^{\int_0^T \frac{1}{2}\sigma\sigma'({X}_t)^2 dt}]<\infty$ for all $T>0$ (Novikov condition). Now $\sigma\sigma'$ is locally locally Lip, hence either we say that the solution has a solution up to a possible explosion time or we assume that $\sigma\sigma'$ has sublinear growth. Take your favourite one! [HS: I'm fine with either.]}
\begin{equation}
    \tau^*(\theta):=\inf\{T^{(n)}, n\geq 1: \widehat{X}_{T^{(n)}} \leq a^*(\theta)\}, \quad \eta^*(\theta):=\inf\{T^{(n)}, n\geq 1: \widehat{X}_{T^{(n)}} \geq y^*(\theta)\}, 
\end{equation}
which belong to the set of admissible Poisson stopping times
\[
\mathcal{T}_p^\lambda
:=\big\{\tau:\tau \text{ is an } \mathbb F\text{-stopping time and }
\tau\in\{T^{(n)}:n\in\mathbb N\}\cup\{\infty\}\ \mathbb P\text{-a.s.}\big\}.
\]
Applying It\^o's formula (up to a standard approximation with smooth functions) to the processes
\begin{align*}
&t\mapsto e^{\int_0^{t\wedge \tau^*(\theta) \wedge \eta}  b'(\widehat X_s)ds}u(\widehat X_{t\wedge \tau^*(\theta) \wedge \eta};\theta), \quad t\geq 0, \quad \eta \in \mathcal{T}_p^\lambda, \\ 
&t\mapsto e^{\int_0^{t\wedge \tau \wedge \eta^*(\theta)}  b'(\widehat X_s)ds}u(\widehat X_{t\wedge \tau \wedge \eta^*(\theta)};\theta), \quad t\geq 0, \quad \tau \in \mathcal{T}_p^\lambda
\end{align*}
and using \eqref{eq:u-ode}--\eqref{eq:u-ineq},
it is easy to show that the function $u$ is the value of the auxiliary constrained Dynkin game
\begin{align} \label{eq:u}
\begin{split}
u(x;\theta) = \inf_{\tau \in \mathcal{T}_p^\lambda} \sup_{\eta \in \mathcal{T}_p^\lambda} \widehat{\mathbb{E}}_{x}\Big[ &\int_{0}^{\tau \wedge \eta} e^{\int_{0}^{t} b'(\widehat{X}_{s})ds} \pi_{x}(\widehat{X}_{t}, \theta) \, dt \\ 
&+ q_u e^{\int_{0}^{\tau} b'(\widehat{X}_{s}) ds} \boldsymbol{1}_{\{\tau<\eta\}}
+ q_d \, e^{\int_{0}^{\eta} b'(\widehat{X}_{s}) ds} \boldsymbol{1}_{\{\eta<\tau\}} \Big],
\end{split}
\end{align}
where $\widehat{\mathbb{E}}_{x}[\,\cdot\,] := \mathbb{E}^{\widehat{\mathbb{P}}}[\,\cdot\, |\widehat{X}_{0}=x]$ for $\widehat{\mathbb{P}}_{x}(\,\cdot\,) := \mathbb{P} (\,\cdot\, | \widehat{X}_{0}=x)$.

% \begin{proposition} \label{prop:u}
% Consider the value $u$ of the Dynkin game in \eqref{eq:u}. Then, 
% \begin{enumerate}[\rm (i)]
% \item $x \mapsto u(x;\theta)$ is decreasing for every $\theta \in \mathbb{R}_{+}$;
% \item $\theta \mapsto u(x;\theta)$ is decreasing for every $x \in \mathcal{I}$.
% \end{enumerate}
% \end{proposition}
% \begin{proof}

% Fix $\theta\in\mathbb{R}_{+}$. By Assumption~\ref{Ass:Payoff}.(ii), the map
% $x\mapsto\pi_x(x,\theta)$ is non-increasing. Moreover, the stochastic flow
% $x\mapsto \widehat X_t^x$ associated with \eqref{eq:hatX} is increasing a.s.,
% because the coefficients are $C^1$ and pathwise uniqueness holds. Hence, for
% $x_1\leq x_2$ and for every pair $(\tau,\eta)\in
% \mathcal T_p^\lambda\times\mathcal T_p^\lambda$, the running payoff in
% \eqref{eq:u} started from $x_1$ is not smaller than the corresponding running
% payoff started from $x_2$, while the terminal rewards $q_u$ and $q_d$ are
% state-independent \bl{[HS: The discounting however is not state-independent. Does this matter?]}. 

% Similarly, by Assumption~\ref{Ass:Payoff}.(v), the map
% $\theta\mapsto \pi_x(x,\theta)$ is decreasing for every fixed
% $x\in\mathcal{I}$. Since the remaining terms in \eqref{eq:u} do not depend on
% $\theta$, the same comparison of payoffs gives
% $u(x;\theta_1)\geq u(x;\theta_2)$ whenever $\theta_1\leq\theta_2$.
% \end{proof}

\begin{lemma} \label{prop:u}
Recall the value $u$ of the Dynkin game in \eqref{eq:u}. Then, for every $x \in \mathcal{I}$, we have that $\theta \mapsto u(x;\theta)$ is decreasing.
\end{lemma}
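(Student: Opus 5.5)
The plan is to compare the Dynkin game payoffs in \eqref{eq:u} for two parameters $\theta_1<\theta_2$, strategy by strategy. Everything in the payoff is independent of $\theta$ except the running integrand $\pi_x(\widehat X_t,\theta)$. The set $\mathcal{T}_p^\lambda$ of admissible Poisson stopping times does not depend on $\theta$. The process $\widehat X$ and the discount factor $e^{\int_0^t b'(\widehat X_s)ds}$ do not depend on $\theta$ either. So for any fixed pair $(\tau,\eta)\in\mathcal{T}_p^\lambda\times\mathcal{T}_p^\lambda$ write
\[
\mathcal{J}(x;\tau,\eta,\theta):=\widehat{\mathbb{E}}_{x}\Big[\int_{0}^{\tau\wedge\eta} e^{\int_{0}^{t} b'(\widehat{X}_{s})ds}\pi_{x}(\widehat{X}_{t},\theta)\,dt+q_u e^{\int_{0}^{\tau} b'(\widehat{X}_{s})ds}\boldsymbol{1}_{\{\tau<\eta\}}+q_d e^{\int_{0}^{\eta} b'(\widehat{X}_{s})ds}\boldsymbol{1}_{\{\eta<\tau\}}\Big].
\]
Then
\[
\mathcal{J}(x;\tau,\eta,\theta_1)-\mathcal{J}(x;\tau,\eta,\theta_2)=\widehat{\mathbb{E}}_x\Big[\int_0^{\tau\wedge\eta}e^{\int_0^t b'(\widehat X_s)ds}\big(\pi_x(\widehat X_t,\theta_1)-\pi_x(\widehat X_t,\theta_2)\big)dt\Big]\geq 0 .
\]
This holds because Assumption~\ref{Ass:Payoff}.(v) gives $\partial_\theta\pi_x<0$, so $\theta\mapsto\pi_x(z,\theta)$ is strictly decreasing for every $z$. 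Taking $\sup_\eta$ and then $\inf_\tau$ preserves this pointwise inequality. Hence $u(x;\theta_1)\geq u(x;\theta_2)$.

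The step needing care is that all quantities are finite and the manipulations are legitimate. Assumption~\ref{Ass:X}.(iii) gives $b'\leq -c<0$, so $e^{\int_0^t b'}\leq e^{-ct}$. The integrand difference is nonnegative, so subtracting expectations is allowed once each payoff is well defined. Well-definedness holds because $\pi_x\geq 0$ by Assumption~\ref{Ass:Payoff}.(ii): each term is nonnegative, and the value may at worst be $+\infty$, with the inequality still meaningful. To avoid such issues one can also use the representation of $u=w'$ from \eqref{eq:u-ode}--\eqref{eq:u-ineq}, which shows $u$ is finite, and restrict attention to $\tau,\eta$ for which the payoff is finite. In particular the equilibrium pair $(\tau^*(\theta_i),\eta^*(\theta_i))$ can be used in a saddle-point argument.

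If one insists on the identification \eqref{eq:u} only through the saddle point, I would argue directly. Let $\tau_2:=\tau^*(\theta_2)$ and $\eta_1:=\eta^*(\theta_1)$. Then
\[
u(x;\theta_2)\leq \mathcal{J}(x;\tau_2,\eta_1,\theta_2)\leq \mathcal{J}(x;\tau_2,\eta_1,\theta_1)\leq u(x;\theta_1).
\]
The first inequality uses optimality of $\tau_2$ against any $\eta$ at $\theta_2$. The second is the pointwise comparison above. The third uses optimality of $\eta_1$ against any $\tau$ at $\theta_1$. This gives the monotonicity without interchanging inf and sup.

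The main obstacle is precisely these saddle-point inequalities. They come from the It\^o/verification argument sketched before the lemma: apply Itô to $e^{\int b'}u(\widehat X)$ stopped at $\tau^*\wedge\eta$, and use \eqref{eq:u-ode} and \eqref{eq:u-ineq}. The compensated Poisson jump terms must vanish in expectation, which is justified by localisation and the exponential discount $e^{-ct}$. Strictness in "decreasing" would follow from strict positivity of the integrand difference together with $\widehat{\mathbb{P}}_x(\tau_2\wedge\eta_1>0)=1$, since Poisson times are a.s.\ positive.
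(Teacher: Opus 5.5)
Your main argument is exactly the paper's proof, and it is enough on its own. You compare the payoffs for each fixed pair $(\tau,\eta)$ using Assumption~\ref{Ass:Payoff}.(v), then take $\sup_\eta$ followed by $\inf_\tau$.

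Your optional saddle-point chain, however, has the players' roles reversed. Since $\tau$ is the minimiser, optimality of $\tau^*(\theta_2)$ gives $\mathcal{J}(x;\tau^*(\theta_2),\eta,\theta_2)\le u(x;\theta_2)$ for every $\eta$. Likewise, optimality of the maximiser $\eta^*(\theta_1)$ gives $\mathcal{J}(x;\tau,\eta^*(\theta_1),\theta_1)\ge u(x;\theta_1)$ for every $\tau$. So your first and third inequalities point the wrong way. The chain works if you take the minimiser's strategy from $\theta_1$ and the maximiser's from $\theta_2$:
\[
u(x;\theta_2)\le \mathcal{J}(x;\tau^*(\theta_1),\eta^*(\theta_2),\theta_2)\le \mathcal{J}(x;\tau^*(\theta_1),\eta^*(\theta_2),\theta_1)\le u(x;\theta_1).
\]
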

\begin{proof}
Fix $x \in \mathbb{R}_{+}$ and $0 \leq \theta_1 \leq \theta_2$. 
Then by Assumption~\ref{Ass:Payoff}.(v), the map $\theta\mapsto \pi_x(x,\theta)$ is decreasing. Hence, pathwise for all $t \geq 0$, $\pi_x(\widehat X_t^x,\theta_1) \geq \pi_x(\widehat X_t^x,\theta_2)$. 
Since the remaining terms in the expectation in \eqref{eq:u} do not depend on $\theta$, taking first the supremum over $\eta$ and then the infimum over $\tau$ preserves the ordering, and therefore $u(x;\theta_1) \geq u(x_2;\theta_2)$.
\end{proof}

% {\color{red}{GIORGIO: Good catch about the monotonicity, but actually we did not need that proposition. The fact is that we have already identified $w'$ as $u$ and therefore the stopping and continuation regions defined in the control problem coincide with those of he Dynkin game. HS: I deleted the first part of the Proposition since only the second part was used. I also downgraded it to a Lemma}}

Using that $w'=u$, it thus follows from Lemma~\ref{Lemma: FBP}, Lemma~\ref{Lemma: wx_max} and Theorem~\ref{thm:verification} that the continuation region of \eqref{eq:u} takes the form 
\begin{align*}
\mathcal{C}^{\theta} 
= \{ x \in \mathcal{I} \, | \, q_d < u(x;\theta) < q_u \} 
&= \{ x \in \mathcal{I} \, | \, q_d < w'(x;\theta) < q_u \} %\\& 
= \{ x \in \mathcal{I} \, | \, a^*(\theta) < x < y^*(\theta) \}
\end{align*}
and the stopping region of \eqref{eq:u} is the union of the following disconnected sets
\begin{align*}
\mathcal{S}^{\theta}_{\inf} 
&= \{x\in\mathcal{I} \; | \; u(x;\theta) \geq q_u \} 
= \{x\in\mathcal{I} \; | \; w'(x;\theta) \geq q_u \} 
= \{x\in\mathcal{I} \; | \; x \leq a^*(\theta) \} 
\\
\mathcal{S}^{\theta}_{\sup} 
&= \{ x \in \mathcal{I} \; | \; u(x;\theta) \leq q_d \}
= \{ x \in \mathcal{I} \; | \; w'(x;\theta) \leq q_d \}
= \{x\in\mathcal{I} \; | \; x \geq y^*(\theta) \}, 
\end{align*}
where the boundaries $a^*$ and $y^*$ are obtained in Theorem \ref{thm:barriers}.
Given that the stopping regions are non-empty by Theorem~\ref{thm:barriers}, we can write for any $\theta \in \mathbb{R}_{+}$, 
\begin{equation} \label{eq:ay}
a^*(\theta) = \max \{ x \in \mathcal{I} \;|\; u(x;\theta) \geq q_u\} 
\quad \text{and} \quad
y^*(\theta) = \min \{x \in \mathcal{I} \;|\; u(x;\theta) \leq q_d \}.
\end{equation}

\begin{lemma} \label{lem:boundary-decreasing}
The maps $\theta \mapsto a^*(\theta)$ and $\theta \mapsto y^*(\theta)$ are non-increasing.
\end{lemma}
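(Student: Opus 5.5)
The plan is to combine the characterisation \eqref{eq:ay} of the optimal boundaries as level sets of the Dynkin game value $u(\cdot;\theta)$ with the monotonicity in $\theta$ from Lemma~\ref{prop:u}. Fix $0 \le \theta_1 \le \theta_2$. We want to show $a^*(\theta_2) \le a^*(\theta_1)$ and $y^*(\theta_2) \le y^*(\theta_1)$.

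\emph{Lower boundary.} By \eqref{eq:ay}, $a^*(\theta_2)$ belongs to the set $\{x : u(x;\theta_2) \ge q_u\}$, so $u(a^*(\theta_2);\theta_2) \ge q_u$. Lemma~\ref{prop:u} gives
\[
u(a^*(\theta_2);\theta_1) \ge u(a^*(\theta_2);\theta_2) \ge q_u.
\]
Hence $a^*(\theta_2) \in \{x : u(x;\theta_1) \ge q_u\} = \mathcal{S}^{\theta_1}_{\inf} = (\mathfrak l, a^*(\theta_1)]$. Therefore $a^*(\theta_2) \le a^*(\theta_1)$.

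\emph{Upper boundary.} Take any $x \ge y^*(\theta_1)$. By \eqref{eq:u-ineq} we have $u(x;\theta_1) \le q_d$. Lemma~\ref{prop:u} then gives $u(x;\theta_2) \le u(x;\theta_1) \le q_d$. So $[y^*(\theta_1),\mathfrak r) \subseteq \{x : u(x;\theta_2) \le q_d\} = [y^*(\theta_2),\mathfrak r)$. Taking the minimum as in \eqref{eq:ay} yields $y^*(\theta_2) \le y^*(\theta_1)$.

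\emph{Main obstacle.} The argument relies on the exact identification of $\{u\ge q_u\}$ and $\{u \le q_d\}$ with the intervals $(\mathfrak l,a^*]$ and $[y^*,\mathfrak r)$. This identification needs the \emph{strict} inequalities $q_d<u<q_u$ on $(a^*,y^*)$ and the one-sided inequalities outside that interval.

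These follow from Lemma~\ref{Lemma: wx_max} and its proof. Step~2 there gives $w'>q_d$ on $[a^*,y^*)$, and Step~1 gives $w' < q_d$ on $(y^*,\mathfrak r)$. Step~3 gives $w'>q_u$ on $(\mathfrak l,a^*)$. The remaining bound $w'<q_u$ on $(a^*,y^*)$ comes from the analogous argument applied to $(w'-q_u)/S'$, which is negative on $(a^*, y^*]$ because it vanishes at $a^*$ and equals $(q_d-q_u)/S'(y^*)$ at $y^*$, with the same unimodality reasoning.

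A secondary technical point is that Lemma~\ref{prop:u} requires the Dynkin representation \eqref{eq:u} of $u=w'$ to hold for every $\theta$. The paper asserts this via It\^o's formula, so I take it as given. With these in hand, both inclusions above are immediate and the lemma follows.
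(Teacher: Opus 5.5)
Your proof is correct and follows the paper's argument. Both use the decrease of $\theta\mapsto u(x;\theta)$ from Lemma~\ref{prop:u} to get the inclusions $\{u(\cdot;\theta_1)\le q_d\}\subseteq\{u(\cdot;\theta_2)\le q_d\}$ and $\{u(\cdot;\theta_2)\ge q_u\}\subseteq\{u(\cdot;\theta_1)\ge q_u\}$, and then read off the ordering of the boundaries from the min/max characterisation \eqref{eq:ay}. Your extra check that $w'<q_u$ on $(a^*,y^*)$, via $(w'(x)-q_u)/S'(x)=\int_{a^*}^{x}(\kappa^*(\theta)-\pi_u(z,\theta))\,m(dz)$, correctly fills in the ``similar arguments'' that the paper leaves implicit in Lemma~\ref{Lemma: wx_max} when justifying \eqref{eq:ay}.
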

\begin{proof}
Take arbitrary $\theta_{1},\theta_{2} \in \mathbb{R}_{+}$ such that $\theta_{1} \leq \theta_{2}$. Then, using that the map $\theta \mapsto u(x;\theta)$ is decreasing for any $x \in \mathcal{I}$ from Proposition \ref{prop:u}, we obtain
\[
\{x \in \mathcal{I} \;|\; u(x;\theta_{1}) \leq q_d \} \subseteq \{x \in \mathcal{I} \;|\; u(x;\theta_{2}) \leq q_d \},
\]
which, by \eqref{eq:ay}, implies that
\[
y(\theta_2) = \min \{x \in \mathcal{I} \;|\; u(x;\theta_{2}) \leq q_d \} \leq \min \{x \in \mathcal{I} \;|\; u(x;\theta_{1}) \leq q_d \} 
= y(\theta_1),
\] 
and proves the desired monotonicity. 

The monotonicity of $\theta \mapsto a^*(\theta)$ can be obtained using similar arguments.
\end{proof}

\begin{proposition}
\label{Prop:continuity}
Recall the pair of boundaries $(a^*,y^*)$ obtained in Theorem \ref{thm:barriers}. Then, the map $\theta \mapsto (a^*(\theta), y^*(\theta))$ is continuous.
\end{proposition}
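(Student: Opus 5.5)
We prove continuity by combining the monotonicity of Lemma~\ref{lem:boundary-decreasing} with the algebraic characterisation of Theorem~\ref{thm:barriers}. Fix $\theta_0\in\mathbb{R}_+$ and a sequence $\theta_n\to\theta_0$; by monotonicity it suffices to treat $\theta_n\uparrow\theta_0$ and $\theta_n\downarrow\theta_0$ separately. In each case the limits
\[
\bar a:=\lim_n a^*(\theta_n)\quad\text{and}\quad \bar y:=\lim_n y^*(\theta_n)
\]
exist in $[\mathfrak l,\mathfrak r]$, with $\bar a\le\bar y$. The goal is to show $(\bar a,\bar y)=(a^*(\theta_0),y^*(\theta_0))$. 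Since Theorem~\ref{thm:barriers}(ii) gives uniqueness of the solution of \eqref{eq:FOC1} with $\mathfrak l<a<y<\mathfrak r$, it suffices to prove two things: $(\bar a,\bar y)$ lies in the open region $\mathfrak l<\bar a<\bar y<\mathfrak r$, and it solves \eqref{eq:FOC1} at $\theta_0$.

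\textbf{Step 1 (limits stay in the interior).} For $\theta_n\uparrow\theta_0$, monotonicity gives $a^*(\theta_n)\ge a^*(\theta_0)>\mathfrak l$ and $y^*(\theta_n)\ge y^*(\theta_0)$. Hence only $\bar a<\mathfrak r$, $\bar y<\mathfrak r$ and $\bar a<\bar y$ need checking; for $\theta_n\downarrow\theta_0$ the roles of the two ends are symmetric.

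To obtain a uniform upper bound on $y^*(\theta_n)$ when $\theta_n\in[\theta_0/2,\theta_0]$, I would use monotonicity once more: $y^*(\theta_n)\le y^*(\theta_0/2)<\mathfrak r$. Likewise, for $\theta_n\in[\theta_0,2\theta_0]$, $a^*(\theta_n)\ge a^*(2\theta_0)>\mathfrak l$. So compactness within $\mathcal I$ comes for free from monotonicity, and the only real issue is ruling out $\bar a=\bar y$.

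\textbf{Step 2 (passing to the limit in \eqref{eq:FOC1}).} Write \eqref{eq:FOC1} in the form \eqref{Eq: OptRewritten1}, namely
\[
\int_a^y\big(\pi_u(z,\theta)-H_u(a;\theta)\big)\,m(dz)=\frac{q_u-q_d}{S'(y)},
\qquad
\int_a^y\big(\pi_d(z,\theta)-H_d(y;\theta)\big)\,m(dz)=\frac{q_u-q_d}{S'(a)},
\]
together with $H_u(a;\theta)=H_d(y;\theta)$. The functions $H_u(x;\theta)$ and $H_d(x;\theta)$ are jointly continuous in $(x,\theta)$. To see this, note that $\pi$ is $C^1$ in $\theta$ locally uniformly by Assumption~\ref{Ass:Payoff}(v), and the linear growth bound in Assumption~\ref{Ass:Payoff}(iv), with $\alpha_i(\theta)$ locally bounded (which I will need to justify or assume, using continuity of $\pi$), allows dominated convergence in $(\Psi_\lambda\pi_u)$ and $(\Phi_\lambda\pi_d)$, since $\mathrm{id}\in\mathcal L^1(\mathcal I)$. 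All other terms ($S'$, $m$) are continuous and independent of $\theta$, so $(\bar a,\bar y)$ satisfies the limiting system at $\theta_0$.

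If $\bar a=\bar y$, the left-hand side of the first equation tends to $0$ while the right-hand side tends to $(q_u-q_d)/S'(\bar y)>0$. This is a contradiction, so $\bar a<\bar y$.

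\textbf{Step 3 (conclusion and main obstacle).} With $\mathfrak l<\bar a<\bar y<\mathfrak r$ solving \eqref{eq:FOC1} at $\theta_0$, uniqueness in Theorem~\ref{thm:barriers}(ii) gives $(\bar a,\bar y)=(a^*(\theta_0),y^*(\theta_0))$, which proves continuity from both sides.

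The main obstacle is Step 2: establishing joint continuity of $H_u$ and $H_d$ in $\theta$. This requires an integrable dominating function for $\pi(\cdot,\theta)$ uniform over $\theta$ in a compact set, near both natural boundaries. Near $\mathfrak r$ I would bound $\pi(x,\theta)\le\pi(x,\theta')+C|\theta-\theta'|\,\sup|\pi_\theta|$ by integrating $\pi_{x\theta}$ from a reference point. Near $\mathfrak l$, $\Psi_\lambda$ integrates against $\psi_\lambda$, which vanishes there, so domination follows from nonnegativity and concavity of $\pi$ together with monotonicity of $\pi$ in $\theta$.
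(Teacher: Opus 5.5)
Your argument is correct, and it takes a genuinely different route from the paper.

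\textbf{The paper's route.} The paper writes \eqref{eq:FOC1} as $\mathcal H(A,B,\theta)=0$ and invokes a parameter-dependent implicit function theorem. This needs $\mathcal H$ to be $C^1$ in $(A,B)$. It also needs the nondegeneracy $\det D_{(A,B)}\mathcal H(a^*(\theta),y^*(\theta),\theta)\neq 0$, which the paper only asserts to follow from the construction in Theorem~\ref{thm:barriers}. It can be checked using $a^*<\hat z_u$, $y^*>\hat z_d$ and Lemma~\ref{Lemma: FractionProperties}, but no computation is given. Uniqueness is then used to identify the local branch with $(a^*,y^*)$.

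\textbf{Your route.} You use Lemma~\ref{lem:boundary-decreasing} to get one-sided limits trapped in a compact subinterval of $\mathcal I$. You pass to the limit in the division-free form \eqref{Eq: OptRewritten1} and rule out the collapse $\bar a=\bar y$ through the strictly positive term $(q_u-q_d)/S'(\bar y)$. You then conclude with the same uniqueness statement.

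\textbf{Comparison.} Your route needs only continuity of the system in $(a,y,\theta)$, with no Jacobian and no differentiability in $(A,B)$. The price is dependence on the monotonicity lemma, and hence on the Dynkin-game representation \eqref{eq:u}. The paper's route does not use monotonicity and, once nondegeneracy is verified, also gives the local structure of the solution branch. Both need continuity of $\theta\mapsto H_u,H_d$. The paper simply asserts this, and you correctly identify it as the real technical point.

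\textbf{Two small repairs to your domination argument.}

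Near $\mathfrak l$: Assumption~\ref{Ass:Payoff} does not make $\pi$ monotone in $\theta$; it only gives $\pi_{x\theta}<0$. Use instead that $\pi(\cdot,\theta)$ is nonnegative and non-decreasing in $x$, so $0\le\pi(z,\theta)\le\pi(x_1,\theta)$ for $z\le x_1$. The map $\theta\mapsto\pi(x_1,\theta)$ is bounded on compact intervals because it is continuous; differentiability in $\theta$ is implicit in Assumption~\ref{Ass:Payoff}.(v).

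Near $\mathfrak r$: for $\theta'\le\theta$ and $x\ge x_0$, the sign of $\pi_{x\theta}$ gives $\pi(x,\theta)\le\pi(x,\theta')+\pi(x_0,\theta)-\pi(x_0,\theta')$. Take $\theta'$ to be the left endpoint of the $\theta$-interval. Bound the last difference by $\sup_\theta|\pi(x_0,\theta)-\pi(x_0,\theta')|$ rather than by $\sup|\pi_\theta|$, which is not known to be finite. Then $\pi(\cdot,\theta')$ plus a constant is an integrable majorant by Assumption~\ref{Ass:Payoff}.(iv) at the single parameter $\theta'$.

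With these repairs, no local boundedness of $\alpha_0,\alpha_1$ in $\theta$ is needed, and only the pointwise convergence $\pi(z,\theta_n)\to\pi(z,\theta_0)$ is used.
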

\begin{proof}
For fixed $\theta\in\mathbb{R}_{+}$, the threhsolds $a^*=a^*(\theta)$,
$y^*=y^*(\theta)$ from Theorem \ref{thm:barriers}, are characterised in Theorem~\ref{Theorem: Expected long-run average payoff}
as the solution of the nonlinear
algebraic system \eqref{eq:FOC1}. 
Equivalently, $(a^*(\theta),y^*(\theta))$ is the unique admissible pair satisfying
\[
\mathcal H(a^*(\theta),y^*(\theta),\theta)=0,
\qquad
\mathcal H:=(\mathcal H_1,\mathcal H_2).
\]
where we define admissible pairs $(A,B)$ satisfying $A<B$, and 
\[
\begin{aligned}
\mathcal H_1(A,B,\theta)
&:=
\frac{1}{m((A,B))}
\bigg[\int_A^B \pi_d(z,\theta)m(dz)+\frac{q_d-q_u}{S'(A)}\bigg]
-\frac{(\Phi_\lambda  \pi_d(\cdot,\theta))(B)}{(\Phi_\lambda  1)(B)},\\
\mathcal H_2(A,B,\theta)
&:=
\frac{1}{m((A,B))}
\bigg[\int_A^B \pi_u(z,\theta)m(dz)+\frac{q_d-q_u}{S'(B)}\bigg]
-\frac{(\Psi_\lambda  \pi_u(\cdot,\theta))(A)}{(\Psi_\lambda  1)(A)}.
\end{aligned}
\]
The regularity assumptions on $\pi$, $b$, $\sigma$, $\pi_d$, $\pi_u$ in Assumptions~\ref{Ass:X} and \ref{Ass:Payoff}, together with the smoothness of the scale function and speed density, imply that $\mathcal H$ is continuous in $(A,B,\theta)$ and continuously differentiable in the variables $(A,B)$. 
Moreover, the construction in Theorem~\ref{thm:barriers}
gives the required nondegeneracy at the optimal pair, namely
\[
\det D_{(A,B)}\mathcal H(a^*(\theta),y^*(\theta),\theta)\neq0.
\]
By the parameter-dependent implicit function theorem, there exist a
neighbourhood $U$ of $\theta$ and a continuous map
$\vartheta\mapsto(\widetilde a(\vartheta),\widetilde y(\vartheta))$ such that
\[
\mathcal H(\widetilde a(\vartheta),\widetilde y(\vartheta),\vartheta)=0,
\quad \vartheta\in U 
\quad \text{and} \quad 
(\widetilde a(\theta),\widetilde y(\theta))=(a^*(\theta),y^*(\theta)).
\]
Shrinking $U$ if necessary, the pair $(\widetilde a(\vartheta),\widetilde y(\vartheta))$ remains admissible for every $\vartheta\in U$. 
For every such $\vartheta$, Theorem~\ref{thm:barriers} gives a unique admissible solution of \eqref{eq:FOC1}. 
The local branch produced by the implicit function theorem is one admissible solution of this system, and hence it must coincide with the optimal pair 
$(\widetilde a(\vartheta),\widetilde y(\vartheta))
=(a^*(\vartheta), y^*(\vartheta))$.
Therefore $(a^*(\cdot), y^*(\cdot))$ is continuous at the arbitrary point
$\theta$, and the claim follows.
\end{proof}

\begin{proposition}
\label{prop:mu-continuous}
Recall the stationary distribution $\mu^{(a^*(\theta),y^*(\theta))}$ of $X^{\nu(a^*(\theta),y^*(\theta))}$ obtained in Theorem~\ref{Theorem:TwoSidedDensity} for $\nu(a^*(\theta),y^*(\theta))$ defined by \eqref{nuay}--\eqref{nudownup} with $a=a^*(\theta)$ and $y=y^*(\theta)$ from Theorem~\ref{thm:barriers}.
Let $p^\theta:=d\mu^{(a^*(\theta),y^*(\theta))}/dm$ denote its density with respect to the speed measure. 
Then, for every $x\in\mathcal{I}$, the map $\theta \mapsto p^{\theta}(x)$ is continuous.
\end{proposition}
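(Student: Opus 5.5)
The plan is to read the continuity directly off the closed-form density from Theorem~\ref{Theorem:TwoSidedDensity}, composed with the continuous threshold map from Proposition~\ref{Prop:continuity}. Write $a_\theta := a^*(\theta)$ and $y_\theta := y^*(\theta)$. By Theorem~\ref{Theorem:TwoSidedDensity}, the density with respect to $m$ is
\[
p^\theta(x) = \frac{g(x;a_\theta,y_\theta)}{N(a_\theta,y_\theta)}, \qquad
g(x;a,y) := \frac{\psi_\lambda(x)}{\psi_\lambda(a)}\mathbf 1_{\{x\le a\}} + \mathbf 1_{\{a<x<y\}} + \frac{\varphi_\lambda(x)}{\varphi_\lambda(y)}\mathbf 1_{\{x\ge y\}},
\]
where $N(a,y) = m((a,y)) + \frac{(\Psi_\lambda 1)(a)}{\psi_\lambda(a)} + \frac{(\Phi_\lambda 1)(y)}{\varphi_\lambda(y)}$. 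It therefore suffices to show two things. First, $(a,y)\mapsto N(a,y)$ is continuous and strictly positive on $\{\mathfrak l<a<y<\mathfrak r\}$. Second, for each fixed $x$, the map $(a,y)\mapsto g(x;a,y)$ is continuous. The composition with the continuous map $\theta\mapsto(a_\theta,y_\theta)$ then gives the claim.

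For the normaliser, I would use the following facts: $m$ has a continuous density $m'$; $\psi_\lambda,\varphi_\lambda$ are continuous and strictly positive on $\mathcal I$; and $(\Psi_\lambda 1)(a)$, $(\Phi_\lambda 1)(y)$ are finite and continuous in their argument. Finiteness holds because $(\Psi_\lambda1)=\psi_\lambda'/(\lambda S')$ and $(\Phi_\lambda1)=-\varphi_\lambda'/(\lambda S')$, both continuous on $\mathcal I$. Positivity of $N$ is immediate since each summand is nonnegative and $m((a,y))>0$. Continuity of $m((a,y))=\int_a^y m'(z)\,dz$ in $(a,y)$ is clear.

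The only genuine issue is continuity of $g(x;\cdot,\cdot)$ at parameter values where $x$ coincides with a threshold, since $g$ is defined piecewise and the pieces switch as $a_\theta$ or $y_\theta$ crosses $x$. This is where I expect the one delicate, though elementary, check to lie.
\begin{itemize}
\item If $x\notin\{a_\theta,y_\theta\}$, then by continuity of the thresholds $x$ stays in the same region for nearby $\vartheta$, and the formula there is continuous in $(a,y)$.
\item If $x=a_\theta$, then $x<y_\vartheta$ for $\vartheta$ near $\theta$. The two candidate values are then $\psi_\lambda(x)/\psi_\lambda(a_\vartheta)$ and $1$, and both tend to $1$ as $a_\vartheta\to x$, so $g$ is continuous (indeed $g(\cdot;a,y)$ is jointly continuous on $\{x\le a\}\cup\{a<x<y\}$).
\item The case $x=y_\theta$ is symmetric, using $\varphi_\lambda(x)/\varphi_\lambda(y_\vartheta)\to1$.
\end{itemize}
The strict ordering $a_\theta<y_\theta$ from Theorem~\ref{thm:barriers} ensures that the two thresholds never merge, so these cases are exhaustive.

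Combining these steps, $\theta\mapsto p^\theta(x)$ is a quotient of continuous functions with nonvanishing denominator, hence continuous. As a byproduct, local uniform bounds on $p^\theta$ follow, which are useful for the later fixed-point argument.
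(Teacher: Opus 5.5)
Your proposal is correct and follows essentially the same route as the paper. You write $p^\theta$ from Theorem~\ref{Theorem:TwoSidedDensity} as the piecewise numerator over the normaliser, obtain continuity of the normaliser from Proposition~\ref{Prop:continuity} together with continuity of $\psi_\lambda$, $\varphi_\lambda$, $\Psi_\lambda 1$, $\Phi_\lambda 1$ and of $m$ on intervals, and handle the points where $x$ meets a threshold by noting that the ratios $\psi_\lambda(x)/\psi_\lambda(a)$ and $\varphi_\lambda(x)/\varphi_\lambda(y)$ tend to $1$ there. Your explicit case analysis at $x=a^*(\theta)$ and $x=y^*(\theta)$ is a slightly more careful version of the paper's brief remark that the pieces match continuously.
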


\begin{proof}
Recall that the density with respect to the speed measure is given in Theorem~\ref{Theorem:TwoSidedDensity} by
\[
p^\theta(x)
=\frac{1}{Z(\theta)}
\begin{cases}
\frac{\varphi_\lambda(x)}{\varphi_\lambda (y^*(\theta))}, & x\geq y^*(\theta),\\
1, & a^*(\theta)<x<y^*(\theta),\\
\frac{\psi_\lambda(x)}{\psi_\lambda (a^*(\theta))}, & x\leq a^*(\theta),
\end{cases}
\]
with
\[
Z(\theta):=
m((a^*(\theta),y^*(\theta)))
+\frac{(\Psi_\lambda 1)(a^*(\theta))}{\psi_\lambda (a^*(\theta))}
+\frac{(\Phi_\lambda 1)(y^*(\theta))}{\varphi_\lambda (y^*(\theta))} > 0.
\] 
Given that $\theta_n\to\theta$ implies $a^*(\theta_n) \to a^*(\theta)$ and $y^*(\theta_n) \to y^*(\theta)$ as $n \to \infty$ by Proposition~\ref{Prop:continuity},  that $\psi_\lambda$, $\varphi_\lambda$, $\Phi_\lambda 1$, $\Psi_\lambda 1$ are continuous, and the continuity of the speed measure of
intervals, we conclude that each of the three terms defining $Z(\theta_n)$ converges to the corresponding term in $Z(\theta)$. 
Therefore $\theta \mapsto Z(\theta)$ is continuous. 
Then, for each fixed $x \in \mathcal{I}$, the corresponding expression within each of the three regions is continuous in $\theta$. 
Moreover, the pieces match continuously at $x=a^*(\theta)$ since
$\psi_\lambda(x)/\psi_\lambda(x)=1$, and at $x=y^*(\theta)$ since $\varphi_\lambda(x)/\varphi_\lambda(x)=1$. 
Thus, we finally  conclude that $p^{\theta_n}(x)\to p^\theta(x)$ as $n \to \infty$, for every $x\in\mathcal{I}$.
\end{proof}

Define the operator $\mathcal{T}:\mathbb{R}_{+} \to \mathbb{R}_{+}$, as
\begin{equation}
\label{def:T}
\mathcal{T} \theta := F\bigg(\int_{\mathcal{I}} f(x) \, \mu^{(a^*(\theta),y^*(\theta))}(dx) \bigg) 
=: F\Big( \langle f, \mu^{(a^*(\theta),y^*(\theta))} \rangle \Big),\quad \theta \in \mathbb{R}_{+}.
\end{equation}
% where $\langle f, \mu^{(a^*(\theta),y^*(\theta))} \rangle := \int_{\mathcal{I}} f(x) \, \mu^{(a^*(\theta),y^*(\theta))}(dx)$.

\begin{theorem}
\label{thm:ExistenceMFGE}
There exists a unique MFG equilibrium $\theta^{*}$, i.e.~a unique $\theta^{*} \in \mathbb{R}_{+}$ such that 
$$
\mathcal{T} \theta^{*} = \theta^{*}.
$$
\end{theorem}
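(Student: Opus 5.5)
The plan is to study the scalar map $\mathcal{T}:\mathbb{R}_+\to\mathbb{R}_+$ from \eqref{def:T}. Existence will come from continuity together with boundary behaviour, and uniqueness from monotonicity. The introduction mentions Schauder--Tychonoff on a compact invariant set; in dimension one this reduces to the intermediate value theorem applied to $\theta\mapsto\mathcal{T}\theta-\theta$. Both conclusions rest on the explicit density of Theorem~\ref{Theorem:TwoSidedDensity} and on the monotonicity and continuity of the thresholds (Lemma~\ref{lem:boundary-decreasing} and Proposition~\ref{Prop:continuity}).

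\emph{Step 1: continuity of $\mathcal{T}$.} By Proposition~\ref{prop:mu-continuous}, $p^{\theta_n}(x)\to p^\theta(x)$ pointwise whenever $\theta_n\to\theta$. To pass to the limit in $\int f\,p^{\theta_n}\,dm$, I would dominate uniformly on a compact neighbourhood $[\underline\theta,\bar\theta]$ of $\theta$. By Lemma~\ref{lem:boundary-decreasing}, $a^*(\theta')\in[a^*(\bar\theta),a^*(\underline\theta)]$ and $y^*(\theta')\in[y^*(\bar\theta),y^*(\underline\theta)]$ for all $\theta'$ in this neighbourhood. Hence $Z(\theta')$ is bounded below by a positive constant. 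Moreover, for $x\ge y^*(\theta')$ we have $\varphi_\lambda(x)/\varphi_\lambda(y^*(\theta'))\le\varphi_\lambda(x)/\varphi_\lambda(y^*(\underline\theta))$, and similarly on the left. This gives the dominating function
\[
C\big(\psi_\lambda\mathbf 1_{(\mathfrak l,a^*(\underline\theta)]}+\mathbf 1_{(a^*(\bar\theta),y^*(\underline\theta))}+\varphi_\lambda\mathbf 1_{[y^*(\bar\theta),\mathfrak r)}\big).
\]
It is integrable against $f\,dm$ because $f\le C(1+|x|^\gamma)$ and $\mathrm{id}\in\mathcal L^1(\mathcal I)$, that is, $\Psi_\lambda(\mathrm{id})$ and $\Phi_\lambda(\mathrm{id})$ are finite. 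Dominated convergence and the continuity of $F$ then give the continuity of $\mathcal{T}$.

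\emph{Step 2: uniqueness via monotonicity.} I would show that $\theta\mapsto\langle f,\mu^{\theta}\rangle$ is non-increasing. Since $a^*$ and $y^*$ are non-increasing, it suffices to show that $\mu^{(a,y)}$ is stochastically increasing in each of $a$ and $y$. For fixed $y$ and $a<a'$, the density ratio $p^{(a',y)}/p^{(a,y)}$ is proportional to a function that is constant on $(\mathfrak l,a]$ and $[a',\mathfrak r)$ and, on $(a,a']$, equals $\psi_\lambda(a)/\psi_\lambda(x)$, which is decreasing in $x$. Because the normalisations differ, this needs checking. The cleanest route is to compare tail functions $x\mapsto\mu^{(a,y)}((x,\mathfrak r))$ directly from the explicit formula, or equivalently to use a monotone likelihood ratio argument. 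An analogous argument handles $y$.

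With $f$ and $F$ increasing, $\mathcal{T}$ is then non-increasing. Hence $\theta\mapsto\mathcal{T}\theta-\theta$ is strictly decreasing and has at most one zero.

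\emph{Step 3: existence.} As $\theta\downarrow0$, Assumption~\ref{Ass:Payoff}.(vi) gives $\pi_x\to\infty$. I would argue, for instance via the Dynkin game representation \eqref{eq:u} and \eqref{eq:ay}, that $a^*(\theta)\to\mathfrak r$, so that $\mu^\theta$ escapes to $\mathfrak r$. Then $\langle f,\mu^\theta\rangle\to\infty$ by Assumption~\ref{Ass:Ff}.(iii), and therefore $\mathcal{T}\theta>\theta$ for small $\theta$. For large $\theta$, monotonicity bounds $\mathcal{T}\theta\le\mathcal{T}(1)<\infty$ for $\theta\ge1$. Taking $\theta>\mathcal{T}(1)$ gives $\mathcal{T}\theta<\theta$, and the intermediate value theorem yields $\theta^*$. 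Actually only $\mathcal{T}0^+>0$ is needed, which already holds since $F>0$ on $(0,\infty)$ and $\langle f,\mu\rangle>0$. The divergence argument is therefore unnecessary, provided $\mathcal{T}$ extends to $\theta=0$ or stays bounded below near $0$; otherwise I would fall back on that argument.

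The main obstacle I expect is the stochastic monotonicity of $\mu^{(a,y)}$ in $(a,y)$ in Step 2, because of the $a$- and $y$-dependent normalising constant.
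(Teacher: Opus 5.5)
Your proposal is correct. Apart from the continuity step, which matches the paper's Step~4 localised to compact neighbourhoods, it takes a genuinely different route.

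\textbf{A fix in Step~2.} Your likelihood ratio is miscomputed, and as written it would give the \emph{reverse} ordering. Let $Z(a,y)$ be the normalising constant of Theorem~\ref{Theorem:TwoSidedDensity}. For $\mathfrak l<a<a'<y$,
\[
\frac{p^{(a',y)}(x)}{p^{(a,y)}(x)}=\frac{Z(a,y)}{Z(a',y)}\cdot
\begin{cases}
\psi_\lambda(a)/\psi_\lambda(a'), & x\le a,\\
\psi_\lambda(x)/\psi_\lambda(a'), & a<x\le a',\\
1, & x>a',
\end{cases}
\]
and for $\mathfrak l<a<y<y'$,
\[
\frac{p^{(a,y')}(x)}{p^{(a,y)}(x)}=\frac{Z(a,y)}{Z(a,y')}\cdot
\begin{cases}
1, & x\le y,\\
\varphi_\lambda(y)/\varphi_\lambda(x), & y<x<y',\\
\varphi_\lambda(y)/\varphi_\lambda(y'), & x\ge y'.
\end{cases}
\]
Both ratios are non-decreasing in $x$. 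The normalising constants only rescale the ratio, and a non-decreasing density ratio between two probability measures gives $\int f\,d\mu'\ge\int f\,d\mu$ for every non-decreasing integrable $f$. So your ``main obstacle'' disappears.

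\textbf{Step~3 needs no hedge.} Once $\mathcal T$ is non-increasing, and positive because $f>0$ on $\mathcal I$ and $F>0$ on $(0,\infty)$, you get $\mathcal T\theta\ge\mathcal T(1)>0$ on $(0,1]$ and $\mathcal T\theta\le\mathcal T(1)$ on $[1,\infty)$. That is all the intermediate value theorem needs. Drop the unproved claim $a^*(\theta)\to\mathfrak r$.

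\textbf{Comparison with the paper.} Uniqueness rests on the same fact in both proofs, namely that $\mathcal T$ is non-increasing, but the justifications differ. The paper couples the controlled processes through the same $(W,N)$, using the order-preserving flow and the monotonicity of $x\mapsto\min\{\max\{x,a\},y\}$ in $(a,y)$. It does not use the density at all. For existence, the paper builds an invariant interval $K=[\underline\theta,\overline\theta]$ and applies Schauder--Tychonoff on it. The lower end comes from an auxiliary Poisson-constrained Dynkin game with running payoff $k$ (Assumption~\ref{Ass:Payoff}.(vi)), which gives uniform lower bounds $\underline a,\underline y$ on the thresholds. The upper end comes from monotonicity.

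Your route is more elementary. It uses only the explicit stationary density and the monotonicity and continuity of the thresholds. It needs neither Assumption~\ref{Ass:Payoff}.(vi) nor the auxiliary game, whose saddle point and free boundaries the paper only asserts by reference to standard results. The paper's coupling, in exchange, orders the controlled processes at all times rather than only their stationary laws. Its invariant interval also gives explicit a~priori bounds on the equilibrium.
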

\begin{proof}
We split the proof into the following five steps.

\vspace{1mm}
{\it Step 1.~Lower bound for the fixed-point operator $\mathcal{T}$.} 
Using the definition \eqref{eq:u} of $u(\cdot;\theta)$, together with Assumptions \ref{Ass:Payoff}.(v)--(vi), which imply that $\pi_{x}(x,\theta) \geq k(x)$ for all $x \in \mathcal{I}$ and any $\theta \in \mathbb{R}_{+}$, 
we can conclude that, for any $\theta \in \mathbb{R}_{+}$, we have  
\begin{align*} %\label{aux Dynkin game}
\begin{split}
u(x;\theta) \geq \underline{u}(x) 
:= \inf_{\tau \in \mathcal{T}_p^\lambda} \sup_{\eta \in \mathcal{T}_p^\lambda} \widehat{\mathbb{E}}_{x} &\bigg[ \int_{0}^{\tau\wedge \eta} e^{\int_{0}^{t} b'(\widehat{X}_{s}) ds} \kappa(\widehat{X}_{s}) ds \\
&+ q_u \, e^{\int_{0}^{\tau} b'(\widehat{X}_{s}) ds} \, \boldsymbol{1}_{\{ \tau < \eta \}} 
+ q_d \, e^{\int_{0}^{\eta} b'(\widehat{X}_{s}) ds} \, \boldsymbol{1}_{\{\eta<\tau\}} \bigg], 
\quad x \in \mathcal{I}.
\end{split}
\end{align*}
The function $\underline{u}$ is thus the value of a constrained Dynkin game, where players are restricted to make decisions only at the jump times of the Poisson process $N$. 
It can be shown by standard techniques that this game admits a saddle point given by the Poisson jump times
$$
\underline{\tau}^{*}(x) := \inf\{ T^{(i)} > 0 \,|\, \widehat{X}_{T^{(i)}} \in \underline{\mathcal{S}}_{\inf} \} 
\quad \text{and} \quad \underline{\eta}^{*}(x) := \inf\{T^{(i)} > 0 \,|\, \widehat{X}_{T^{(i)}} \in \underline{\mathcal{S}}_{\sup} \}
$$
where $\underline{\mathcal{S}}_{\inf}$ and $\underline{\mathcal{S}}_{\sup}$ are the stopping regions of each player, defined by 
$$ 
\underline{\mathcal{S}}_{\inf} := \{x \in \mathcal{I} \,|\, \underline{u}(x) \geq q_u\} 
\quad \text{and} \quad \underline{\mathcal{S}}_{\sup} := \{ x \in \mathcal{I} \,|\, \underline{u}(x) \leq q_d \}.
$$
In particular, thanks to Assumption \ref{Ass:Payoff}.(vi), standard analysis for Poisson constrained Dynkin games for one-dimensional diffusions (see e.g. \citep{LiangSun2019, LempaSaarinen2023}) imply existence of free boundaries $\mathfrak{l} < \underline{a} < \underline{b} < \mathfrak{r}$ such that: 
$$
\underline{a} := \sup \{ x \in \mathcal{I} \,|\, \underline{u}(x) \geq q_u\}
\quad \text{and} \quad 
\underline{y} := \inf\{ x \in \mathcal{I} \,|\, \underline{u}(x) \leq q_d \}
$$
Then, for any $\theta \in \mathbb{R}_{+}$, we have
\begin{equation} \label{ula}
a^*(\theta) 
= \sup\{x\in\mathcal{I} \,|\, u(x;\theta) \geq q_u \} 
\geq \sup \{x \in \mathcal{I} \,|\, \underline{u}(x) \geq q_u \} 
= \underline{a}.
\end{equation}
Similarly, we can conclude that 
\begin{equation} \label{uly}
y^*(\theta) 
\geq \underline{y}, \quad \text{for all } \theta \in \mathbb{R}_{+}.
\end{equation}

In view of \eqref{ula}--\eqref{uly}, and by the comparison principle for the order-preserving stochastic flow of the one-dimensional diffusion, combined with the monotonicity of the projection map $x\mapsto \min\{\max\{x,a\},y\}$ in the threshold pair $(a,y)$, we have 
$$
X^{\nu(a^*(\theta), y^*(\theta))} 
\equiv X^{x, \nu(a^*(\theta), y^*(\theta))} 
\geq X^{x, \nu(\underline{a}, \underline{y})}, \quad \mathbb{P}\text{-a.s.},
$$
where we use the additional superscript $x$ to stress the starting point of the process (whenever needed).
Since $f$ is increasing by Assumption \ref{Ass:Ff}, we obtain
\begin{equation*}
\mathbb{E}_{x}\big[ f\big(X^{\nu(\underline{a}, \underline{y})}_{t} \big) \big] 
\leq \mathbb{E}_{x}\big[ f\big( X^{\nu(a^*(\theta), y^*(\theta))}_{t} \big) \big], \quad \text{for any } t\geq 0, 
\end{equation*}
which implies, using also that $F$ is increasing, that 
\begin{equation*}
\underline{\theta} := F \Big(\mathbb{E}_{x}\big[ f\big(X^{\nu(\underline{a}, \underline{y})}_{\infty} \big) \big] \Big) 
\leq F \Big(\mathbb{E}_{x}\big[ f\big( X^{\nu(a^*(\theta), y^*(\theta))}_{\infty} \big) \big] \Big)
= F \Big( \int_{\mathcal{I}} f(x) \, \mu^{(a^*(\theta), y^*(\theta))}(dx) \Big)
= \mathcal{T} \theta,
\end{equation*}
where $\underline{\theta}$ is well-defined as 
\begin{equation} \label{ultheta}
\underline{\theta} 
:= F \Big( \mathbb{E}_{x} \big[ f\big(X^{\nu(\underline{a}, \underline{y})}_{\infty} \big) \big] \Big) 
= F \Big( \int_{\mathcal{I}} f(x) \, \underline{\mu}(dx) \Big) ,
\end{equation}
since there exists a unique stationary distribution $\underline{\mu}$ of the process $(X^{x, \nu(\underline{a}, \underline{y})}_{t})_{t\geq 0}$, thanks to the analysis in Section \ref{sec:density}. 

\vspace{1mm}
{\it Step 2.~Upper bound for the fixed-point operator $\mathcal{T}$.} 
Consider now any $\theta \geq \underline{\theta}$, where $\underline{\theta}$ is defined by \eqref{ultheta} in Step 1. 
Thanks to the monotonicity of $a^*$ and $y^*$ in Lemma \ref{lem:boundary-decreasing}, we have
\begin{equation} \label{olay}
a^*(\theta) \leq a^*(\underline{\theta}) 
=: \overline{a} 
\quad \text{and} \quad
y^*(\theta) \leq y^*(\underline{\theta}) 
=: \overline{y}, 
\quad \text{for all } \theta \geq \underline\theta.
\end{equation}
Analogous considerations as in Step 1 then yield that
\begin{align} \label{oltheta}
\begin{split}
\mathcal{T} \theta
= F \Big( \int_{\mathcal{I}} f(x) \, \mu^{(a^*(\theta), y^*(\theta))}(dx) \Big)
&= F \Big( \mathbb{E}_{x} \big[ f\big(  X^{\nu(a^*(\theta), y^*(\theta))}_{\infty} \big) \big] \Big) \\
&\leq F \Big( \mathbb{E}_{x} \big[ f\big( X^{\nu(\overline{a}, \overline{y})}_{\infty} \big) \big] \Big) 
= F \Big( \int_{\mathcal{I}} f(x) \, \overline{\mu}(dx) \Big)
=: \overline{\theta} 
\end{split}
\end{align} 
where $\overline{\mu}$ is the unique stationary distribution of the process $(X^{x, \nu(\overline{a}, \overline{y})}_{t})_{t\geq 0}$, which exists according to the analysis in Section \ref{sec:density}.    
    
\vspace{1mm}
{\it Step 3.~Invariant set for the fixed-point operator $\mathcal{T}$.}   
Combining the bounds obtained in Steps 1 and 2, namely \eqref{ultheta} and \eqref{oltheta}, we conclude that $\mathcal{T}$ maps the convex, compact set
\begin{equation} \label{eq:compactset}
K := [\underline{\theta},\overline{\theta}] \subseteq \mathbb{R}_{+}
\quad \text{into itself}.
\end{equation}
    
\vspace{1mm}
{\it Step 4.~Continuity of the fixed-point operator $\mathcal{T}$.} 
We begin by recalling the bounds of $a^*(\theta)$ and $y^*(\theta)$ obtained in \eqref{ula}--\eqref{uly} and \eqref{olay} for all $\theta \in K$.
Using these bounds we observe that $a^*(\theta)$ and $y^*(\theta)$ belong to a compact $\widehat{K}$, defined in terms of the compact set $K$ by
\begin{equation} \label{compactset} 
a^*(\theta), y^*(\theta) \in \big[ \, \underline{a}, \overline{y} \, \big]
=: \widehat{K}, \quad \text{for all } \theta \in K.
\end{equation}
Combining this with the fact that for every fixed $x\in\mathcal{I}$ the speed-density $p^\theta(x)$ is
continuous in $\theta$ (due to Proposition~\ref{prop:mu-continuous}), and its explicit expression in Theorem~\ref{Theorem:TwoSidedDensity}, as well as the growth condition on $f$ in Assumption~\ref{Ass:Ff}, we obtain a common
integrable bound for $x\mapsto f(x) \, p^\theta(x)$ with respect to $m(dx)$, $\theta\in K$. Hence,
\[
\theta \mapsto  \int_{\mathcal{I}} f(x) \, \mu^{(a^*(\theta), y^*(\theta))}(dx) \quad \text{is continuous}.
\]
Since $F$ is continuous, we conclude that
\[
\mathcal T:K\to K
\quad\text{is continuous on the convex compact set }K.
\]

\vspace{1mm}
{\it Step 5.~Existence and uniqueness of the fixed-point.} 
Given the compact and convex set $K$ from Step 3 and the continuity of the map $\theta \mapsto \mathcal{T} \theta$ from Step 4, we can apply the Schauder--Tychonoff fixed-point theorem (see, e.g.~\citep[Cor.~17.56]{AliprantisBorder1999}), to conclude that there exists $\theta^{*} \in K$ such that $\mathcal{T}\theta^{*}=\theta^{*}$.

For uniqueness, we assume (aiming for a contradiction) that there exists another fixed point $\theta'\in K$ of $\mathcal{T}$, such that $\theta^{*} \neq \theta'$. 
Without loss of generality we can assume that $\theta^{*}<\theta'$ (the opposite inequality can be treated similarly). 
Given the monotonicity of $a^*$ and $y^*$ from Lemma~\ref{lem:boundary-decreasing}, we have
$$
a^*(\theta^{*}) \geq a^*(\theta') 
\quad \text{and} \quad 
y^*(\theta^{*}) \geq y^*(\theta'), 
$$
so that, by following the same arguments as in Step 1, we obtain that $\theta'=\mathcal{T}\theta'\leq \mathcal{T}\theta^{*}=\theta^{*}$, which leads to a contradiction.
\end{proof}

\section{Case Study}
\label{sec:illustration}

In this section we illustrate our theoretical results in the case study of Remark~\ref{Rem:Benchmark}. 
In particular, the uncontrolled state process evolves as a geometric Brownian motion, so that  
\begin{equation}
\label{eq:gbm}
dX^\nu_t = - \rho X_t \, dt + \sigma  X_t\,dW_t + d\nu_t, 
, \quad t \ge 0, \quad X^\nu_0 = x \in \mathcal{I}, \quad \rho,\sigma>0 ,
\end{equation}
and the running reward and mean-field primitives are given by \eqref{case}.
% We select the payoff and the mean-field primitives to be
% \begin{equation*} \pi(x,\theta)=A_\theta x^\delta, \quad A_\theta:=\theta^{-(1+\delta)}+k^*,\quad F(m)=m^{1/\delta}, \quad f(x)=x^\delta, \end{equation*}
% where $k^*>0$ and $\delta \in(0,1)$. Hence, \begin{equation*} \pi_i(x,\theta)=A_\theta x^\delta-q_i\rho x, \qquad i\in\{u,d\}. \end{equation*}
It is straightforward to verify that Assumptions \ref{Ass:X}, \ref{Ass:Payoff}, and \ref{Ass:Ff} are satisfied for this specification.

For the uncontrolled diffusion $X^0$ from  \eqref{eq:gbm}, we have 
\begin{equation*}
S'(x)=x^\eta, 
\quad m'(x)=\frac{2}{\sigma^2}x^{-2-\eta}, 
\quad x>0, 
\quad \eta := \frac{2\rho}{\sigma^2},
\end{equation*} 
so that the increasing and decreasing fundamental solutions of $(\mathcal G-\lambda)u=0$ can be written as $\psi_\lambda(x)=x^{r_+(\lambda)}$ and $\varphi_\lambda(x)=x^{r_-(\lambda)}$, respectively, where
\begin{equation*}
r_\pm(\lambda) = \frac{1+\eta \pm \sqrt{(1+\eta)^2+8\lambda/\sigma^2}}{2}.
\end{equation*}

\subsection{Solution to the Constrained Ergodic Singular Control Problem in \eqref{Eq:BetaLambda}}
\label{sec:Egcontrol}

For fixed $\theta \in \mathbb{R}_+$, we know from Theorem~\ref{Theorem: Expected long-run average payoff} that the optimal thresholds $(a^*,y^*)$ solve the system
\begin{equation}
\label{eq:gbm-poisson-system}
K(a^*,y^*; \theta)
= H_u(a^*; \theta, \lambda) 
= H_d(y^*; \theta, \lambda),
\end{equation}
where we define via \eqref{eq:FOC1} and using $d_+(\lambda) := r_+(\lambda)-1 - \eta$ and $d_-(\lambda):=1+\eta-r_-(\lambda)$,
\begin{align*}
K(a,y;\theta) &:=\frac{q_d \, y^{-\eta} - q_u \, a^{-\eta} + \frac{2}{\sigma^2} \, (\theta^{-(1+\delta)}+k^*) \, \frac{y^{\delta-1-\eta}-a^{\delta-1-\eta}}{\delta-1-\eta}}{\frac{2}{\sigma^2}\frac{a^{-1-\eta}-y^{-1-\eta}}{1+\eta}} , \\
H_u(a;\theta,\lambda) 
&:= d_+(\lambda) \bigg(\frac{(\theta^{-(1+\delta)}+k^*) \, a^\delta}{d_+(\lambda) + \delta} - \frac{q_u \, \rho \, a}{d_+(\lambda)+1}\bigg), \\
H_d(y;\theta,\lambda) 
&:= d_-(\lambda) \bigg(\frac{(\theta^{-(1+\delta)}+k^*) \,y^\delta}{d_-(\lambda)-\delta}-\frac{q_d \, \rho \, y}{d_-(\lambda)-1}\bigg).
\end{align*}
Equations \eqref{eq:gbm-poisson-system} are highly non-linear but we know from Theorem~\ref{thm:barriers} that this system admits a unique solution $(a^*,y^*)$. 
We can therefore easily obtain this solution numerically; see Figure~\ref{fig:gbm-threshold-convergence} for the optimal thresholds $(a^*(\theta), y^*(\theta)) \equiv (a^*_\lambda(\theta), y^*_\lambda(\theta))$ for different values of $\lambda>0$.
We further observe that when $\lambda \to \infty$, 
we get $\lim_{\lambda \to \infty} H_u(a;\theta,\lambda) = \pi_u(a;\theta)$ and $\lim_{\lambda \to \infty} H_d(a;\theta,\lambda) = \pi_d(a;\theta)$, so that the system of equations solved by the optimal thresholds $(a^*_\infty(\theta), y^*_\infty(\theta))$ takes the form 
\begin{equation} \label{eq:gbm-reflected-system}     
K(a_\infty,y_\infty;\theta) 
= \pi_u(a_\infty;\theta)
= \pi_d(y_\infty;\theta).
\end{equation}
Notably, the system \eqref{eq:gbm-reflected-system} corresponds to the one resulting from the classical singular control problem when interventions are allowed continuously, where the thresholds $(a_\infty,y_\infty)$ are reflecting boundaries; see \citep{Alvarez2018}. 
The optimal reflecting thresholds $(a^*_\infty(\theta), y^*_\infty(\theta))$ are also illustrated in Figure~\ref{fig:gbm-threshold-convergence}.
% Furthermore, we observe that the optimal thresholds $(a^*_\lambda(\theta), y^*_\lambda(\theta))$ converge to the optimal reflecting thresholds $(a^*_\infty(\theta), y^*_\infty(\theta))$ as $\lambda \to \infty$.

\begin{figure}[ht]
\centering
\includegraphics[width=0.60\textwidth]{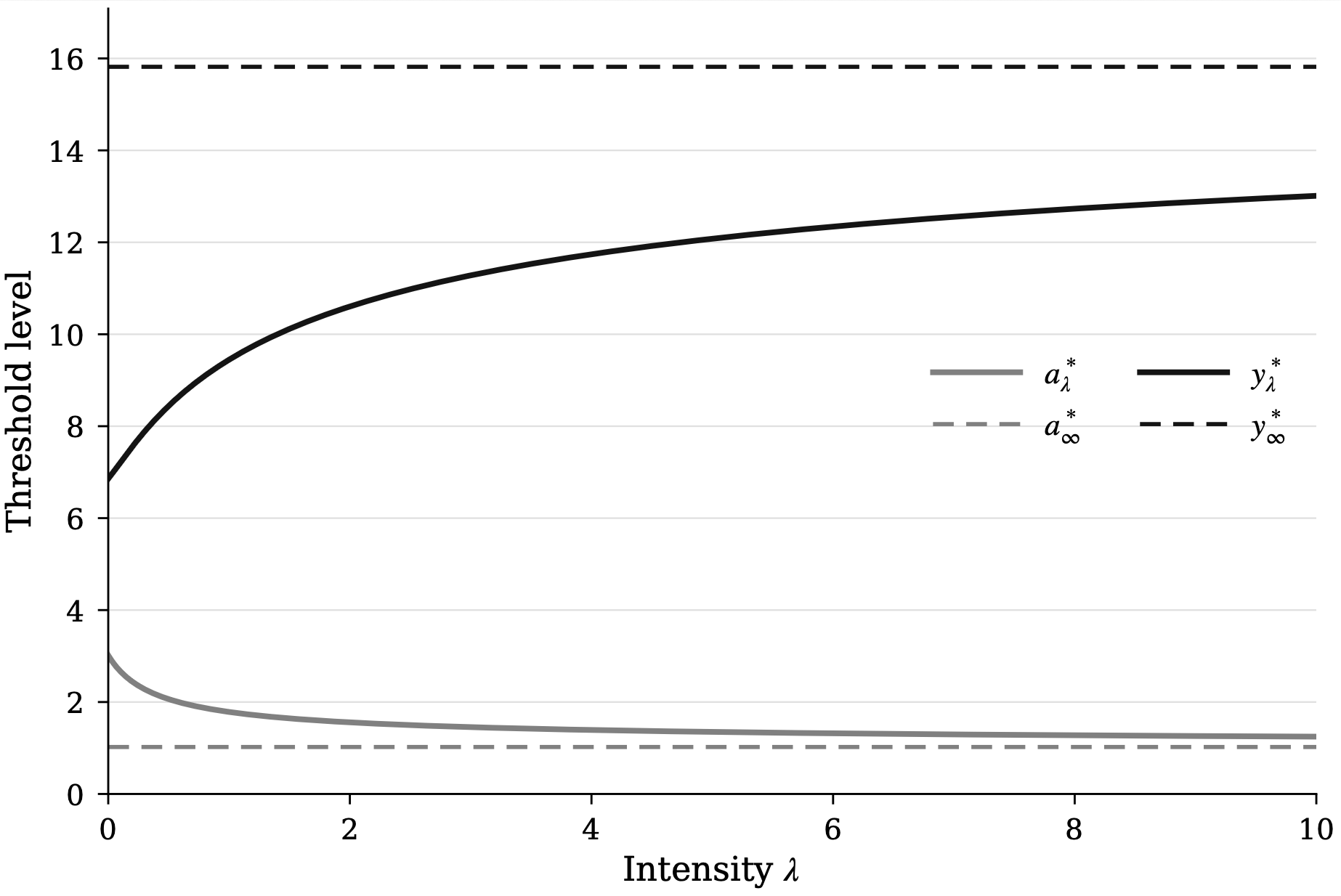}
\caption{
Numerical illustration of optimal thresholds $(a^*_\lambda(\theta), y^*_\lambda(\theta))$ solving \eqref{eq:gbm-poisson-system} for $\lambda>0$ and the optimal reflecting boundaries $(a^*_\infty(\theta), y^*_\infty(\theta))$ in the classical singular control problem with continuous interventions. 
Model parameters:
$\delta= 0.65, \rho= 0.30, \sigma= 0.90, k^* = 0.30, q_u= 2.00, q_d= 1.50, \theta = 1$.
}
\label{fig:gbm-threshold-convergence}
\end{figure}

\subsection{The Constrained Ergodic MFG Equilibrium}

Given the optimal thresholds $a^*_\lambda(\theta)$ and $y^*_\lambda(\theta))$ solving \eqref{eq:gbm-poisson-system} for any $\lambda>0$ in Section~\ref{sec:Egcontrol}, we find that the equilibrium of the constrained ergodic MFG in Definition~\ref{def:Equilibrium} is given via Theorem~\ref{Theorem:TwoSidedDensity} by the fixed point $\theta^*$ of 
\begin{equation}
\label{eq:gbm-poisson-equilibrium} 
\theta 
= \Big( \int_0^\infty x^\delta \, \mu^{(a^*_\lambda(\theta),y^*_\lambda(\theta))}_{\lambda}(dx) \Big)^{1/\delta}
= \bigg( \frac{M(a^*_\lambda(\theta),y^*_\lambda(\theta);\lambda)}{Z(a^*_\lambda(\theta),y^*_\lambda(\theta);\lambda)}\bigg)^{1/\delta},
\end{equation}
where we define 
\begin{align*}
M(a,y;\lambda)
&:=\frac{a^{\delta-1-\eta}}{d_+(\lambda)+\delta}
+ \frac{a^{\delta-1-\eta}-y^{\delta-1-\eta}}{1+\eta-\delta}
+ \frac{y^{\delta-1-\eta}}{d_-(\lambda)-\delta}, \\
Z(a,y;\lambda)
&:= \frac{a^{-1-\eta}}{d_+(\lambda)}
+ \frac{a^{-1-\eta}-y^{-1-\eta}}{1+\eta}
+ \frac{y^{-1-\eta}}{d_-(\lambda)}.
\end{align*}
Given that there exists a unique fixed point of \eqref{eq:gbm-poisson-equilibrium} thanks to Theorem~\ref{thm:ExistenceMFGE}, we can easily obtain $\theta^* \equiv \theta^*_\lambda$ numerically; see Figure~\ref{fig:gbm-equilibrium-convergence} for different values of $\lambda>0$.
We can also obtain the equilibrium $\theta^*_\infty$ when $\lambda \to \infty$ in \eqref{eq:gbm-poisson-equilibrium}, where the latter takes the form 
\[
\theta = \bigg( \lim_{\lambda \to \infty} \frac{M(a^*_\infty(\theta),y^*_\infty(\theta);\lambda)}{Z(a^*_\infty(\theta),y^*_\infty(\theta);\lambda)}\bigg)^{1/\delta}
= \bigg( \frac{1+\eta}{1+\eta-\delta}\bigg)^{1/\delta} \bigg( \frac{(a^*_\infty(\theta))^{\delta-1-\eta}-(y^*_\infty(\theta))^{\delta-1-\eta}}{(a^*_\infty(\theta))^{-1-\eta}-(y^*_\infty(\theta))^{-1-\eta}} \bigg)^{1/\delta},
\]
which is also illustrated in Figure~\ref{fig:gbm-equilibrium-convergence}.

\begin{figure}[ht]
\centering
\includegraphics[width=0.60\textwidth]{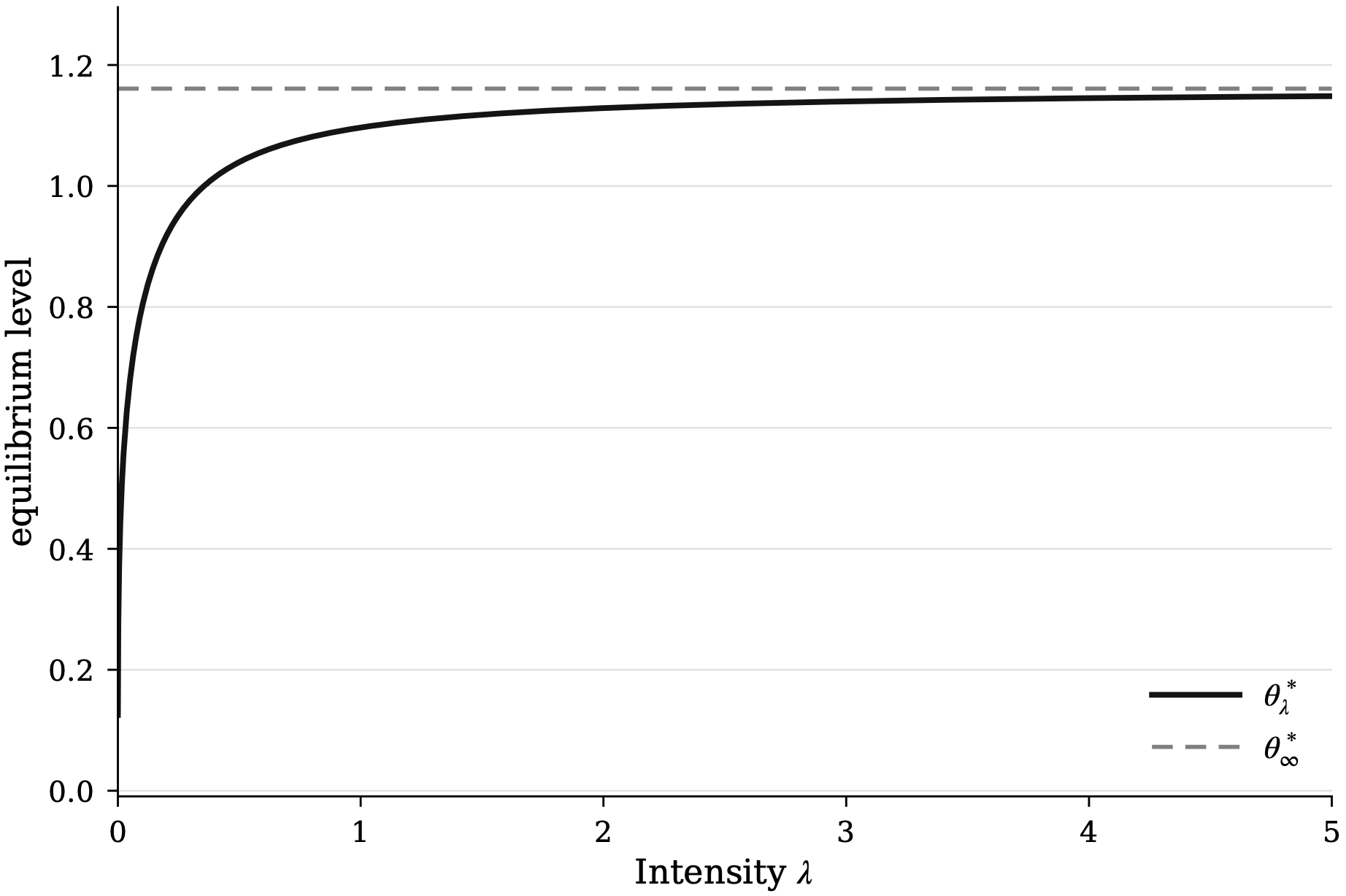}
\caption{Numerical illustration of the MFG equilibrium $\theta^*_\lambda$ solving \eqref{eq:gbm-poisson-equilibrium} for $\lambda>0$.  
Model parameters:
$\delta=0.65, \rho=0.30, \sigma=0.90, k^*=0.30, q_u=2.00, q_d=1.50$.}
\label{fig:gbm-equilibrium-convergence}
\end{figure}

\section{Stationary distributions for one-sided Poissonian singularly controlled processes of threshold type}
\label{sec:density1}

In this section, we aim at deriving the stationary distribution for one-sided Poissonian singularly controlled processes of threshold type $X^\downarrow$ from \eqref{Xdown} and $X^\uparrow$ from \eqref{Xup}.

We firstly derive expressions for the hitting times in \eqref{ulambda}, which simplify considerably when the underlying process $X$ satisfies suitable ergodicity conditions, as shown in the following result.

\begin{corollary}
\label{Corr:u_lambda}
Suppose that Assumptions~\ref{Ass:X}.(i),(ii),(iv),(v) hold true, and consider $\lambda > 0$ and $y \in \mathcal{I}$. 
Recall the first Poisson-sampled hitting times $T_y$ and $T^y$ from \eqref{Ty} and their expected values $u_{\lambda}^{\downarrow}(\cdot;y)$ and $u_{\lambda}^{\uparrow}(\cdot;y)$, respectively, from \eqref{ulambda}. 
If $S(\mathfrak{l+}) = -\infty$ and $m((\mathfrak{l},y)) < \infty$, then
\begin{align*}   
u_{\lambda}^{\downarrow}(x;y) 
%= \mathbb{E}_x[T_y] 
&= \int_\mathfrak{l}^y G_0^y(x,z) \, m(dz) + \frac{m((\mathfrak{l},y))}{ \lambda(\Phi_\lambda  1)(y)} \, \varphi_\lambda(y) + \frac{1}{\lambda}, \quad x \leq y, 
\end{align*}
and if $S(\mathfrak{r}-) = \infty$ and $m((y,\mathfrak{r})) < \infty$, then 
\begin{align*}
u_{\lambda}^{\uparrow}(x;y)
%= \mathbb{E}_x[T^y] 
&= \int_y^\mathfrak{r} G_0^y(x,z) \, m(dz) + \frac{m((y,\mathfrak{r}))}{ \lambda(\Psi_\lambda  1)(y)} \, \psi_\lambda(y) + \frac{1}{\lambda}, \quad x \geq y.
\end{align*}
%where $G_0^y, m, \varphi_\lambda , \Phi_\lambda , \psi_\lambda , \Psi_\lambda $ are defined in Section~\ref{sec:Diffusions}.
\end{corollary}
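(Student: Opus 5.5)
The plan is to integrate the occupation measure of Theorem~\ref{Theorem: PoissonOccupation} over the whole state space, since $u_{\lambda}^{\downarrow}(x;y)=V^{\downarrow}(x;y,\mathcal{I})$. It is cleaner to start from the two-piece decomposition in its proof, namely $V^{\downarrow}(x;y,A)=u(x;y,A)+h(x;y)\,V^{\downarrow}(y;y,A)$ together with the density \eqref{Eq:V_down_theorem_xy} of $V^{\downarrow}(y;y,dz)$. Under $S(\mathfrak{l}+)=-\infty$ two things simplify. First, \eqref{Eq:hitprob} gives $h(x;y)=1$ for all $x\le y$. Second, the term $\varphi_\lambda(y)/(S(y)-S(\mathfrak{l}+))$ in the normalising constant vanishes. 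Hence
\[
u_{\lambda}^{\downarrow}(x;y)=\int_{\mathfrak{l}}^{y}G_0^y(x,z)\,m(dz)+\frac{1}{\lambda(\Phi_\lambda 1)(y)}\Big(\varphi_\lambda(y)\,m((\mathfrak{l},y))+\int_y^{\mathfrak{r}}\varphi_\lambda(z)\,m(dz)\Big).
\]
Here the first term is $u(x;y,\mathcal{I})=\mathbb{E}_x[\tau_y]$, written via the killed Green kernel \eqref{Eq:GreenKernelKilled}. This term is supported on $z<y$, since $G_0^y(x,z)=0$ when $(x-y)(z-y)<0$.

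The last integral is by definition $(\Phi_\lambda 1)(y)$, so it contributes exactly $1/\lambda$. The remaining piece is $\varphi_\lambda(y)\,m((\mathfrak{l},y))/(\lambda(\Phi_\lambda1)(y))$. Both are finite: the first because $m((\mathfrak{l},y))<\infty$, the second because $(\Phi_\lambda1)(y)=-\varphi_\lambda'(y)/(\lambda S'(y))$ is finite and positive. This reproduces the claimed formula.

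The only point needing care is the first term. With $S(\mathfrak{l}+)=-\infty$ the kernel \eqref{Eq:GreenKernelKilled} must be read as a limit, giving $G_0^y(x,z)=S(y)-S(x\vee z)$ for $x,z<y$. We need $\int_{\mathfrak{l}}^y G_0^y(x,z)\,m(dz)<\infty$, i.e.\ $\mathbb{E}_x[\tau_y]<\infty$. This follows from $m((\mathfrak{l},y))<\infty$: split the integral at $x$, so that for $z<x$ the integrand is the constant $S(y)-S(x)$, and the part over $(x,y)$ lives on a compact interval. The monotone/limit passage from the general formula in Theorem~\ref{Theorem: PoissonOccupation} is then justified by monotone convergence.

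The statement for $u_{\lambda}^{\uparrow}$ follows symmetrically from the $V^{\uparrow}$ formula. Under $S(\mathfrak{r}-)=\infty$ we get $h(x;y)=1$ for $x\ge y$, the integral $\int_{\mathfrak{l}}^{y}\psi_\lambda\,dm=(\Psi_\lambda1)(y)$ yields $1/\lambda$, and the mass of $(y,\mathfrak{r})$ yields $\psi_\lambda(y)\,m((y,\mathfrak{r}))/(\lambda(\Psi_\lambda1)(y))$.
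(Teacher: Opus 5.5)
Your proposal is correct and follows the paper's proof: both combine the strong-Markov decomposition $V^{\downarrow}(x;y,\cdot)=u(x;y,\cdot)+h(x;y)\,V^{\downarrow}(y;y,\cdot)$ with integration of the density of $V^{\downarrow}(y;y,dz)$ over $\mathcal{I}$, using $h\equiv 1$ and the vanishing of $\varphi_\lambda(y)/(S(y)-S(\mathfrak{l}+))$ when $S(\mathfrak{l}+)=-\infty$, with the upward case handled symmetrically. Your explicit check that $\int_{\mathfrak{l}}^{y}G_0^y(x,z)\,m(dz)<\infty$ via $m((\mathfrak{l},y))<\infty$ is a small addition the paper leaves implicit.
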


\begin{proof}
We only prove the expression of $u_{\lambda}^{\downarrow}$, since the expression of $u_{\lambda}^{\uparrow}$ can be obtained using completely analogous arguments.

We firstly notice that $u_{\lambda}^{\downarrow}(x;y) = V^{\downarrow}(x;y,\mathcal{I})$ defined by \eqref{OT}. 
Thus, by the strong Markov property and the memoryless property of the exponential distribution, we obtain that 
$$
u_{\lambda}^{\downarrow}(x;y) 
= u(x;y,\mathcal{I}) + h(x;y) \, V^{\downarrow}(y;y,\mathcal{I})
= \int_\mathfrak{l}^y G_0^y(x,z) \, m(dz) + h(x;y) \, V^{\downarrow}(y;y,\mathcal{I})
, \quad x \leq y,
$$
where the latter equality follows from the Green kernel representation of $u(x;y,\mathcal{I})=u(x;y)$ for $x \leq y$, and $h(x,y)$ is defined by \eqref{Eq:hitprob}.
Then, substituting $x=y$ in the expression for $V^{\downarrow}$ obtained in Theorem~\ref{Theorem: PoissonOccupation}, and integrating over $\mathcal{I}$, yields (cf.~\eqref{PsiPhi})
\[
V^{\downarrow}(y; y, \mathcal{I}) 
= \frac{\varphi_\lambda(y) \int_{\mathfrak{l}}^y h(z;y) \,  m(dz) + \int_y^{\mathfrak{r}} \varphi_\lambda(z) \, m(dz)}{\lambda(\Phi_\lambda 1)(y) + \frac{\varphi_{\lambda}(y)}{S(y)-S(\mathfrak{l}+)}} 
= \frac{\varphi_\lambda(y) \int_{\mathfrak{l}}^y h(z;y) \,  m(dz) + (\Phi_\lambda 1)(y)}{\lambda(\Phi_\lambda 1)(y) + \frac{\varphi_{\lambda}(y)}{S(y)-S(\mathfrak{l}+)}}. 
\]
Finally, using the limit $S(\mathfrak{l+}) = -\infty$, which further implies via \eqref{Eq:hitprob} that $h(z;y)=1$ for all $z\leq y$, and the finiteness of $m((\mathfrak{l},y))$, we obtain the desired expression for $u_{\lambda}^{\downarrow}(x;y)$ and complete the proof.
%%%%%%%%%%%%%%%%%%%%%%%%%%%%%%
% Taking $A = \mathcal{I}$ in \eqref{Eq:V_up_markov} gives $V^{\downarrow}(x; y, \mathcal{I}) 
% = u(x; y, \mathcal{I})+h(x;y)V^{\downarrow}(y; y, \mathcal{I})$ for $x \leq y$, and $u(x; y, \mathcal{I})  = \int_\mathfrak{l}^y G_0^y(x,z) m(dz)$. Then integrating the expressions \eqref{Eq:V_down_theorem} in Theorem~\ref{Theorem: PoissonOccupation} over $\mathcal{I}$ and using the limit $S(\mathfrak{l+}) = -\infty$ gives the result for $u_{\lambda}^{\downarrow}(x;y)$. The upward case is analogous.
\end{proof}

The preceding Corollary~\ref{Corr:u_lambda}, together with the explicit densities derived in Theorem~\ref{Theorem: PoissonOccupation} for the occupation measures in \eqref{OT}, are used in the following result to derive explicit representations for the stationary measures associated with the one-sided downward and upward Poissonian singularly controlled processes $X^\downarrow$ and $X^\uparrow$ at the threshold $y \in \mathcal{I}$.

\begin{theorem}
\label{Thm: PoissonOneSidedDistributions}
Suppose that Assumptions~\ref{Ass:X}.(i),(ii),(iv),(v) hold true, and consider $\lambda > 0$ and $y \in \mathcal{I}$. 
If $S(\mathfrak{l+}) = -\infty$ and $m((\mathfrak{l},y)) < \infty$, the stationary distribution $\mu^{\downarrow}_{\lambda}$ of $X^\downarrow$ is given by 
\begin{equation*}
\mu^{\downarrow}_{\lambda}(dx) = \frac{m(dx) }{ (\Phi_\lambda 1)(y) + \varphi_\lambda(y)m((\mathfrak{l},y))} \cdot
\begin{cases}
\varphi_\lambda(y), & x \leq y, \\
\varphi_\lambda(x), & x \geq y,
\end{cases}
\end{equation*}
whereas if $S(\mathfrak{r-}) = \infty$ and $m((y, \mathfrak{r}))<\infty$, the stationary distribution $\mu^{\uparrow}_{\lambda}$ of $X^\uparrow$ is given by 
\begin{equation*}
\mu^{\uparrow}_{\lambda}(dx) = \frac{m(dx) }{ (\Psi_\lambda 1)(y) + \psi_\lambda(y)m((y,\mathfrak{r}))} \cdot
\begin{cases}
\psi_\lambda(x), & x \leq y, \\
\psi_\lambda(y), & x \geq y.
\end{cases}
\end{equation*}
\end{theorem}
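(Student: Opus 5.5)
We prove only the downward case; the upward one is symmetric, with $\psi_\lambda$, $\Psi_\lambda$ and $(y,\mathfrak{r})$ replacing $\varphi_\lambda$, $\Phi_\lambda$ and $(\mathfrak{l},y)$. The plan is the renewal-cycle argument of Theorem~\ref{Theorem:TwoSidedDensity}, but with a single regeneration point. Whenever $X^\downarrow$ is controlled at a Poisson time, it lands exactly at $y$. By the strong Markov property and the memorylessness of the exponential inter-arrival times, the path of $X^\downarrow$ started at $y$ therefore splits into i.i.d.\ cycles. Each cycle begins at $y$ and ends at the next intervention time, that is, at the next Poisson time $T_y$ at which the process is observed in $[y,\mathfrak{r})$. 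Observations exactly at $y$ contribute a zero jump and have probability zero, so they are harmless. Before $T_y$ no control is exerted, so $X^\downarrow$ coincides with $X$ on $[0,T_y)$. The expected occupation measure of one cycle is therefore exactly $V^{\downarrow}(y;y,dz)$, which was computed in \eqref{Eq:V_down_theorem_xy} within the proof of Theorem~\ref{Theorem: PoissonOccupation}. The expected cycle length is $u^\downarrow_\lambda(y;y)=V^{\downarrow}(y;y,\mathcal{I})$.

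Next I invoke the hypothesis $S(\mathfrak{l}+)=-\infty$. By \eqref{Eq:hitprob} this gives $h(z;y)=1$ for every $z\le y$, and it also removes the term $\varphi_\lambda(y)/(S(y)-S(\mathfrak{l}+))$ from the denominator. Formula \eqref{Eq:V_down_theorem_xy} then reduces to
\[
V^{\downarrow}(y;y,dz)=\frac{m(dz)}{\lambda(\Phi_\lambda 1)(y)}\cdot\begin{cases}\varphi_\lambda(y), & z<y,\\ \varphi_\lambda(z), & z\ge y.\end{cases}
\]
Integrating over $\mathcal{I}$, as in Corollary~\ref{Corr:u_lambda}, gives the mean cycle length
\[
\frac{\varphi_\lambda(y)\,m((\mathfrak{l},y))+(\Phi_\lambda 1)(y)}{\lambda(\Phi_\lambda1)(y)}.
\]
This is finite precisely because $m((\mathfrak{l},y))<\infty$ and $(\Phi_\lambda 1)(y)<\infty$. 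This finiteness is what makes the cycles positive recurrent.

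By the renewal-reward theorem (\citep{Harrison1985,KarlinTaylor1981}), for every Borel set $A$ the long-run fraction of time that $X^\downarrow$ spends in $A$ equals $V^{\downarrow}(y;y,A)/\mathbb{E}_y[T_y]$. If $X^\downarrow$ is started at an arbitrary $x$, there is an initial delay cycle until the first intervention. Its expected length is $u^\downarrow_\lambda(x;y)$, which is finite by Corollary~\ref{Corr:u_lambda}, so the delay does not change the limit. In the ratio $V^{\downarrow}(y;y,dz)/\mathbb{E}_y[T_y]$ the factor $\lambda(\Phi_\lambda 1)(y)$ cancels. This yields exactly the stated density, with normalising constant $(\Phi_\lambda1)(y)+\varphi_\lambda(y)m((\mathfrak{l},y))$.

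The main obstacle is upgrading the time-average statement to a genuine limiting stationary distribution. For that step, I would check that the cycle length has a non-lattice (indeed absolutely continuous) law, since it is at least an exponential time. The key renewal theorem for regenerative processes then gives convergence of the marginal laws to the same measure, and stationarity follows because the regenerative structure makes this measure invariant.
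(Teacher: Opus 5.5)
Your proposal is correct and follows essentially the same route as the paper. You regenerate at $y$, take the cycle occupation measure $V^{\downarrow}(y;y,\cdot)$ from \eqref{Eq:V_down_theorem_xy} simplified via $S(\mathfrak{l}+)=-\infty$, and normalise by the mean cycle length $u^{\downarrow}_\lambda(y;y)$ through renewal theory. Your extra treatment of the delay cycle and of the non-lattice cycle law, which the paper leaves implicit, is sound. For $x>y$ you only need the delay to be a.s.\ finite, since Corollary~\ref{Corr:u_lambda} covers only $x\le y$.
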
 

\begin{proof}
We only prove the expression of $\mu^{\downarrow}_{\lambda}$, since the expression of $\mu^{\uparrow}_{\lambda}$ is obtained similarly.

Consider the path of $X^\downarrow_t$ being built from cycles $C_0, C_1, C_2, C_3, \ldots$, such that the initial segment $C_0$ starts from $x$ and ends after the process is controlled down to $y$ at a Poisson jump time, whereas all subsequent ones are instead selected to start from $y$ and end after the process is controlled back down to $y$ at a Poisson jump time. 
In other words, since, by the strong Markov property and the memoryless property of the exponential distribution, the cycle lengths $C_1, C_2, \ldots$ are i.i.d. and each $C_i$ has the same distribution as the first Poisson-sampled hitting time $T_y$ of $X^\downarrow_t$ started from $y$, we have (cf.~\eqref{ulambda}, Corollary~\ref{Corr:u_lambda})
\begin{equation*}
C_i \overset{d}{=} T_y 
\quad \text{and} \quad 
\mathbb{E}_y[C_i] 
= \mathbb{E}_y[T_y]
= u^{\downarrow}_{\lambda}(y;y), 
\quad i =1,2, \ldots.
\end{equation*}
Following the arguments from renewal theory \citep{Harrison1985, KarlinTaylor1981}, we can express the stationary measure, independently of the initial state $x$, as
\begin{equation}  \label{mudown}
\mu^{\downarrow}_{\lambda}(A) 
= \frac{\mathbb{E}_y \big[ \int_0^{C_i} \mathbf{1}(X^\downarrow_t \in A) \, dt \big]}{\mathbb{E}_y[C_i]}
= \frac{V^{\downarrow}(y;y,A)}{u^{\downarrow}_{\lambda}(y;y)}
, \quad \text{for a Borel set } A \subset \mathcal{I}.
\end{equation}
We then evaluate the expression of $V^{\downarrow}$ in Theorem~\ref{Theorem: PoissonOccupation} at $x=y$ and integrate over  of $A \subset \mathcal{I}$ to obtain thanks to $S(\mathfrak{l}+)=-\infty$, that
\begin{equation} \label{Eq:Vrecurrent}
V^{\downarrow}(y; y, dx) = \frac{m(dx)}{\lambda(\Phi_{\lambda}1)(y)} \cdot
\begin{cases}
\varphi_{\lambda}(y), & x \leq y,\\
\varphi_{\lambda}(x), & x \geq y.
\end{cases}
\end{equation}
Finally, dividing by $u^{\downarrow}_{\lambda}(y;y)$ from Corollary~\ref{Corr:u_lambda} gives $\mu^{\downarrow}_{\lambda}(dx)$ and completes the proof.
\end{proof}

\section*{Acknowledgements}

The work of Harto Saarinen was supported by the OP Research Foundation Grant 20240114. 

\bibliographystyle{plainnat}
\bibliography{bibliography.bib}

@book{AliprantisBorder1999,
  author = {Aliprantis, C. and Border, K.},
  year = {1999},
  title = {Infinite dimensional analysis: A hitchhiker’s guide},
  publisher = {Springer},
  address = {},
  edition = {}
}

@article{calvia2025optimal,
  title={Optimal Policy Characterization for a Class of Multi-Dimensional Ergodic Singular Stochastic Control Problems},
  author={Calvia, Alessandro and Cannerozzi, Federico and Ferrari, Giorgio},
  journal={Forthcoming on The Annals of Applied Probability. ArXiv preprint arXiv:2510.11158},
  year={2025}
}

@article{dianetti2026ergodic,
  title={Ergodic Mean Field Games of Singular Control with Regime-Switching},
  author={Dianetti, Jodi and Ferrari, Giorgio and Tzouanas, Ioannis},
  journal={SIAM Journal on Control and Optimization},
  volume={64},
  number={3},
  pages={1830--1858},
  year={2026},
  publisher={SIAM}
}

@article{cannerozzi2026stationary,
  title={Stationary Mean-Field singular control of an Ornstein-Uhlenbeck process},
  author={Cannerozzi, Federico},
  journal={arXiv preprint arXiv:2601.23036},
  year={2026}
}

@article{christensen2025two,
  title={Two sided ergodic singular control and mean-field game for diffusions},
  author={Christensen, S{\"o}ren and Mordecki, Ernesto and Oli{\'u}, Facundo},
  journal={Decisions in Economics and Finance},
  volume={48},
  number={1},
  pages={241--267},
  year={2025},
  publisher={Springer}
}

@article{aid2025stationary,
  title={A stationary mean-field equilibrium model of irreversible investment in a two-regime economy},
  author={A{\"\i}d, Ren{\'e} and Basei, Matteo and Ferrari, Giorgio},
  journal={Operations Research},
  volume={73},
  number={5},
  pages={2351--2374},
  year={2025},
  publisher={INFORMS}
}

@article{cao2023stationary,
  title={Stationary discounted and ergodic mean field games with singular controls},
  author={Cao, Haoyang and Dianetti, Jodi and Ferrari, Giorgio},
  journal={Mathematics of Operations Research},
  volume={48},
  number={4},
  pages={1871--1898},
  year={2023},
  publisher={INFORMS}
}

@article{cannerozzi2026cooperation,
  title={Cooperation, Correlation, and Competition in Ergodic N-Player Games and Mean-Field Games of Singular Controls: A Case Study},
  author={Cannerozzi, Federico and Ferrari, Giorgio},
  journal={Mathematics of Operations Research},
  volume={},
  number={},
  pages={},
  year={2026},
  publisher={INFORMS}
}

@article{cao2022mfgs,
  title={MFGs for partially reversible investment},
  author={Cao, Haoyang and Guo, Xin},
  journal={Stochastic Processes and their Applications},
  volume={150},
  pages={995--1014},
  year={2022},
  publisher={Elsevier}
}

@article{ferrari2025stationary,
  title={Stationary Mean-Field Games of Singular Control under Knightian Uncertainty},
  author={Ferrari, Giorgio and Tzouanas, Ioannis},
  journal={Forthcoming on Mathematics of Operations Research. ArXiv preprint arXiv:2505.08317},
  year={2025}
}

@article{CalviaFedericoFerrariGozzi2024,
  title={Existence and uniqueness results for a mean-field game of optimal investment},
  author={Calvia, A. and Federico, S. and Ferrari, G. and Gozzi, F.},
  journal={Applied Mathematics and Optimization},
  volume={94},
  pages={},
  year={2026}
}

@article{AchdouBueraLasryLionsMoll2014,
  title={Partial differential equations models in macroeconomics},
  author={Achdou, Y. and Buera, F.J. and Lasry, J.-M. and Lions, J.-L. and Moll, B.},
  journal={Philosophical Transactions of the Royal Society A},
  volume={372},
  pages={2028},
  year={2014}
}

@article{Alvarez2024,
title = {Solutions for Poissonian stopping problems of linear diffusions via extremal processes},
journal = {Stochastic Processes and their Applications},
volume = {172},
pages = {104351},
year = {2024},
author = {Luis H.R. {Alvarez E.} and Jukka Lempa and Harto Saarinen and Wiljami Sillanpää}
}

@book{Harrison1985,
  author = {Harrison, J. M.},
  year = {1985},
  title = {Brownian Motion and Stochastic Flow Systems},
  publisher = {Wiley, New York},
  address = {},
  edition = {}
}

@book{KarlinTaylor1981,
  author = {Karlin, S. and Taylor, H.M.},
  year = {1981},
  title = {A Second Course in Stochastic Processes},
  publisher = {Academic Press, New York},
  address = {},
  edition = {}
}

@article{Saarinen2023,
  title={Two-sided {P}oisson control of linear diffusions},
  author={Saarinen, H.},
  journal={Stochastics},
  volume={},
  number={},
  pages={1--23},
  year={2023},
  publisher={}
}

@article{Albrecher2011a,
  title={Randomized observation periods for the compound {P}oisson risk model: Dividends},
  author={Albrecher, H. and Cheung, E. and Thonhauser, S.},
  journal={Astin Bulletin},
  volume={41},
  number={},
  pages={645--672},
  year={2011},
  publisher={}
}

@article{Albrecher2011b,
  title={The optimal dividend barrier in the Gamma–Omega model},
  author={Albrecher, H. and Gerber, H. and Shiu, E.},
  journal={European Actuarial Journal},
  volume={1},
  number={},
  pages={43--55},
  year={2011},
  publisher={}
}

@article{Albrecher2011c,
  title={Optimal dividend-payout in random discrete time},
  author={Albrecher, H. and Bäuerle, N. and Thonhauser, S.},
  journal={Statistics and Risk Modelling},
  volume={28},
  number={},
  pages={251--276},
  year={2011},
  publisher={}
}

@article{Albrecher2013,
  title={Randomized observation times for the compound {P}oisson risk model: The discounted penalty function},
  author={Albrecher, H. and Cheung, E. and Thonhauser, S.},
  journal={Scandinavian Actuarial Journal},
  volume={6},
  number={},
  pages={424--452},
  year={2013},
  publisher={}
}

@article{Albrecher2016,
  title={Exit identities for {L}évy processes observed at {P}oisson arrival times},
  author={Albrecher, H. and Ivanovs, J. and Zhou, X.},
  journal={Bernoulli},
  volume={22},
  number={},
  pages={1364--1382},
  year={2016},
  publisher={}
}

@article{Albrecher2017,
  title={Strikingly simple identities relating exit problems for {L}évy processes under continuous and {P}oisson observations},
  author={Albrecher, H. and Ivanovs, J.},
  journal={Stochastic Processes and their Applications},
  volume={127},
  number={},
  pages={643--656},
  year={2017},
  publisher={}
}

@article{Alvarez2018,
  title={A Class of Solvable Stationary Singular Stochastic Control Problems},
  author={Alvarez, L. H. R.},
  journal={arXiv:1803.03464},
  volume={},
  number={},
  pages={},
  year={2018},
  publisher={}
}

@article{Avanzi2013,
  title={On a periodic dividend barrier strategy in the dual model with continuous monitoring of solvency},
  author={Avanzi, B. and Cheung, E. and Wong, B. and Woo, J.-K.},
  journal={Insurance: Mathematics and Economics},
  volume={52},
  number={},
  pages={98--113},
  year={2013},
  publisher={}
}

@book{BorodinSalminen,
  author = {Borodin, A. N. and Salminen, P.},
  year = {2015},
  title = {Handbook of {B}rownian Motion - Facts and Formulae},
  publisher = {Birkhäuser, Basel},
  address = {},
  edition = {2}
}

@article{DupuisWang2002,
  title={Optimal Stopping with Random Intervention Times},
  author={Dupuis, P. and Wang, H.},
  journal={Advances in Applied Probability},
  volume={34},
  number={},
  pages={141--157},
  year={2002},
  publisher={}
}

@article{LempaSaarinen2023,
  author  = {Lempa, Jukka and Saarinen, Harto},
  title   = {A Zero-Sum {P}oisson Stopping Game with Asymmetric Signal Rates},
  journal = {Applied Mathematics and Optimization},
  volume  = {87},
  pages   = {35},
  year    = {2023},
}

@article{Guo2005,
  title={Stopping at the maximum of geometric {B}rownian motion when signals are received},
  author={Guo, X. and Liu, J.},
  journal={Journal of Applied Probability},
  volume={42},
  number={},
  pages={826--838},
  year={2005},
  publisher={}
}

@article{Hobson2021,
  title={The shape of the value function under {P}oisson optimal stopping},
  author={Hobson, D.},
  journal={Stochastic Processes and their Applications},
  volume={133},
  number={},
  pages={229--246},
  year={2021},
  publisher={}
}

@article{JackZervos2006,
  title={A Singular Control Problem with An Expected and Pathwise Ergodic Performance Criterion},
  author={Jack, A. and Zervos, M.},
  journal={Journal of Applied Mathematics and Stochastic Analysis},
  volume={(82538)},
  number={},
  pages={1--19},
  year={2006},
  publisher={}
}

@article{Kunwai2022,
  title={On an Ergodic Two-Sided Singular Control Problem},
  author={Kunwai, K. and Xi, F. and  Yin, G. and Zhu, C.},
  journal={Applied Mathematics and Optimization},
  volume={86},
  number={},
  pages={26},
  year={2022},
  publisher={}
}

@article{Lange2020,
  title={Real-option valuation in multiple dimensions using {P}oisson optional stopping times},
  author={Lange, R.-J. and Ralph, D. and Støre, K.},
  journal={Journal of Financial and Quantitative Analysis},
  volume={55},
  number={},
  pages={653--677},
  year={2020},
  publisher={}
}

@article{Lempa2012,
  title={Optimal Stopping with Information Constraint},
  author={Lempa, J.},
  journal={Applied Mathematics and Optimization},
  volume={66},
  number={},
  pages={147--173},
  year={2012},
  publisher={}
}

@article{Lempa2014,
  title={Bounded Variation Control of Itô Diffusion with Exogenously Restricted Intervention Times},
  author={Lempa, J.},
  journal={Advances in Applied Probability},
  volume={46},
  number={},
  pages={102--120},
  year={2014},
  publisher={}
}

@article{LempaSaarinen2021,
  title={Ergodic Control of Diffusions with Random Intervention Times},
  author={Lempa, J. and Saarinen, H.},
  journal={Journal of Applied Probability},
  volume={58},
  number={},
  pages={1--21},
  year={2021},
  publisher={}
}

@article{Liang2015b,
  title={Funding liquidity, debt tenor structure, and creditor’s belief: An exogenous dynamic debt run model},
  author={Liang, G. and Lütkebohmert, E. and Wei, W.},
  journal={Mathematics and Financial Economics},
  volume={9},
  number={},
  pages={271--302},
  year={2015},
  publisher={}
}

@article{LiangSun2019,
  title={{D}ynkin games with {P}oisson random Intervention Times},
  author={Liang, G. and Sun, H.},
  journal={SIAM Journal on Control and Optimization},
  volume={57},
  number={},
  pages={2962--2991},
  year={2019},
  publisher={}
}

@article{LiangSun2020,
  title={Risk-Sensitive {D}ynkin Games with Heterogeneous {P}oisson Random Intervention Times},
  author={Liang, G. and Sun, H.},
  journal={arXiv:2008.01787},
  volume={},
  number={},
  pages={},
  year={2020},
  publisher={}
}

@article{Matsumoto2006,
  title={Optimal Portfolio of Low Liquid Assets with a Log-utility Function},
  author={Matsumoto, K.},
  journal={Finance and Stochastics},
  volume={10},
  number={},
  pages={121--145},
  year={2006},
  publisher={}
}

@article{Matsumoto2007,
  title={Portfolio insurance with liquidity risk},
  author={Matsumoto, K.},
  journal={Asia-Pacific Financial Markets},
  volume={14},
  number={},
  pages={363--386},
  year={2007},
  publisher={}
}

@article{Perez2021,
  title={Nonzero-sum optimal stopping game with continuous vs. periodic exercise opportunities},
  author={Pérez, J.L. and Rodosthenous, N. and Yamazaki, K.},
  journal={Mathematics of Operations Research},
  volume={50},
  number={},
  pages={1832--1867},
  year={2021},
  publisher={}
}

@article{Pham2008,
  title={A model of optimal consumption under liquidity risk with random trading times},
  author={Pham, H. and Tankov, P.},
  journal={Mathematical Finance},
  volume={18},
  number={},
  pages={613--627},
  year={2008},
  publisher={}
}

@article{Pham2009,
  title={Investment/consumption choice in illiquid markets with random trading times},
  author={Pham, H.},
  journal={Radon Series on Computational and Applied Mathematics},
  volume={8},
  number={},
  pages={1--17},
  year={2009},
  publisher={}
}

@article{Pham2009b,
  title={A coupled system of integrodifferential equations arising in liquidity risk model},
  author={Pham, H. and Tankov, P.},
  journal={Applied Mathematics and Optimization},
  volume={59},
  number={},
  pages={147--173},
  year={2009},
  publisher={}
}

@article{RogersZane2002,
  title={A Simple Model of Liquidity Effects},
  author={Rogers, L. C. G. and Zane, O.},
  journal={Advances in Finance and Stochastics},
  volume={},
  number={},
  pages={161--176},
  year={2002},
  publisher={Springer-Verlag, Berlin Heidelberg}
}

@article{Wang2001,
  title={Some Control Problems with Random Intervention Times},
  author={Wang, H.},
  journal={Advances in Applied Probability},
  volume={33},
  number={},
  pages={404--422},
  year={2001},
  publisher={}
}

@article{Yu2021,
  title={Randomized observation periods for compound {P}oisson risk model with capital injection and barrier dividend},
  author={Yu, W. and Guo, P. and Wang, Q. and Guan, G. and Huang, Y. and Yu X.},
  journal={Advances in Difference Equations},
  volume={2021},
  number={},
  pages={220},
  year={2021},
  publisher={}
}

@article{Zhao2017,
  title={Optimal periodic dividend and capital injection problem for spectrally positive {L}évy processes},
  author={Zhao, Y. and Chen, P. and Yang, H.},
  journal={Insurance: Mathematics and Economics},
  volume={74},
  number={},
  pages={135--146},
  year={2017},
  publisher={}
}

@book{kyprianou2014,
  author = {Kyprianou, Andreas E.},
  title = {Fluctuations of {L{\'e}vy} Processes with Applications},
  edition = {2},
  series = {Universitext},
  publisher = {Springer},
  address = {Berlin, Heidelberg},
  year = {2014},
}

\end{document}